\documentclass[a4paper,11pt]{amsart}

\usepackage{geometry}
\usepackage{amssymb,mathrsfs,amsmath,amsthm}
\usepackage{enumitem}
\usepackage{autobreak}
\allowdisplaybreaks
\usepackage{bm}
\usepackage{amsmath}
\usepackage{tabularx}
\usepackage{tabulary}

\usepackage{graphicx}
\usepackage{color}
\usepackage{booktabs}
\usepackage{hyperref}
\usepackage{cleveref}

\newtheorem{theorem}{Theorem}[section]
\newtheorem{lemma}{Lemma}[section]

\newtheorem{corollary}{Corollary}[section]
\newtheorem{definition}{Definition}[section]
\newtheorem{remark}{Remark}[section]

\numberwithin{equation}{section}
\numberwithin{figure}{section}

\begin{document}

\title{Sharp conditions for fully nonlinear equations with $(n-1)$-Hessian matrix: Part II. $k$-Hessian case}

\author{Jingwen Ji}
\address{School of Mathematics and Shing-Tung Yau Center, Southeast University, Nanjing 211189, P. R. China}
\email{jingwen\_ji@seu.edu.cn}

\author{Feida Jiang*}
\address{School of Mathematics and Shing-Tung Yau Center, Southeast University, Nanjing 211189, P. R. China; Shanghai Institute for Mathematics and Interdisciplinary Sciences, Shanghai 200433, P. R. China}
\email{jiangfeida@seu.edu.cn}
\thanks{Corresponding author: Feida Jiang.}

\author{Mengni Li}
\address{School of Mathematics, Southeast University, Nanjing 211189, P. R. China}
\email{lmn@seu.edu.cn}

\subjclass[2020]{Primary 35J60; Secondary 35B08, 35B44, 35J70, 35J75}
\keywords{$k$-Hessian type equation; Keller--Osserman condition; admissible entire subsolution; radial Cauchy problem; radial boundary blow-up solution.}

\begin{abstract}
We study admissible entire subsolutions and radial boundary blow-up solutions of $S_k^{\frac1k}[\Delta uI-D^2u]=f(u)+g(u)|Du|^q$ for $1\leq k<n$, continuing our study of the determinant case $k=n$ in Part I. Under structural assumptions on the continuous, non-decreasing coefficients $f$ and $g$, we establish necessary and sufficient conditions for the existence of entire subsolutions when $q<2$. Our results cover the non-Lipschitz case $0<q<1$ with a possibly vanishing zero-order term, the gradient-singular case $q<0$, and sign-changing gradient coefficients.
We also characterize finite-radius blow-up of radial solutions and derive two-sided estimates for the blow-up radius, as well as pointwise upper bounds for their profiles in terms of inverse Keller--Osserman functions.
\end{abstract}

\maketitle

\setcounter{tocdepth}{1}
\tableofcontents

\section{Introduction}\label{Section1}

We study admissible entire subsolutions and radial boundary blow-up solutions of the normalized Hessian type equation associated with the real $(n-1)$-Hessian matrix
\begin{equation}\label{eq1.1}
S_k^{\frac1k}[\Delta uI-D^2u]=f(u)+g(u)|Du|^q,\quad x\in\mathbb R^n,
\end{equation}
where $n\geq2$, $1\leq k<n$, and $q\in\mathbb R$. Here $Du$, $D^2u$, and $\Delta u$ denote the gradient, Hessian matrix, and Laplacian of $u$, respectively, and $I$ denotes the $n\times n$ identity matrix. The functions $f$ and $g$ are continuous and non-decreasing. For a symmetric matrix $M$ with eigenvalue vector $\lambda(M)=(\lambda_1,\ldots,\lambda_n)$, we define
$$
S_k[M]:=\sigma_k(\lambda(M))
=\sum_{1\leq i_1<\ldots<i_k\leq n}\lambda_{i_1}\ldots\lambda_{i_k}.
$$
The $k$th root in \eqref{eq1.1} makes the operator homogeneous of degree one in the Hessian.

To ensure the ellipticity of the operator, recall the G{\aa}rding cone
$$
\Gamma_k:=\left\{\lambda\in\mathbb R^n:\sigma_j(\lambda)>0,\quad j=1,\ldots,k\right\}.
$$
Let $\Omega\subseteq\mathbb R^n$. A function $u\in C^2(\Omega)$ is called \emph{admissible} in $\Omega$ if
$$
\lambda(\Delta uI-D^2u)\in\overline{\Gamma}_k,\quad x\in\Omega,
$$
and it is called \emph{strictly admissible} if $\lambda(\Delta uI-D^2u)\in\Gamma_k$ in $\Omega$. We denote the class of admissible functions in $\Omega$ by $\Phi^k(\Omega)$. Thus the operator $S_k^{\frac1k}[\Delta uI-D^2u]$ is degenerate elliptic with respect to admissible functions.

This is the second of two companion papers. The first paper \cite{JJL2026} treated the determinant case $k=n$ using a radial divergence identity.
The present paper concerns the remaining range $1\leq k<n$. For $2\leq k<n$, the radial equation couples $\varphi^{\prime\prime}$ and $\varphi^\prime/r$ implicitly and does not admit the divergence representation used in the determinant case.
We establish unified Keller--Osserman conditions for all $1\leq k<n$, whose form and integral exponents are independent of $k$. Beyond the necessary and sufficient conditions for the existence of entire subsolutions established in Part I, the present paper also provides quantitative estimates for radial boundary blow-up solutions, including two-sided bounds for the blow-up radius and pointwise upper bounds for the solution profiles.

We impose the same structural conditions (A) and (B) on $f$ and $g$ as in Part I, and restate them in Section \ref{SectionMainResults}. Condition (A) requires $f,g\geq0$ and $f(t)+g(t)>0$ for all $t>0$. Condition (B) requires $f\geq0$ with $f(t)>0$ for all $t>0$ and permits $g$ to change sign.
Condition (A) covers both gradient-degenerate and gradient-singular cases, and condition (B) is considered for $q>0$.
Despite these common coefficient assumptions, the distinct radial structures lead to different Keller--Osserman conditions and critical gradient exponents: $q=2$ for $1\leq k<n$ and $q=2-\frac1n$ for $k=n$. The local constructions and derivative estimates therefore require a different analysis from that used in the determinant case.

\subsection{Background and Motivation}

The standard $k$-Hessian operator is one of the fundamental fully nonlinear elliptic operators defined by the elementary symmetric functions of the eigenvalues of the Hessian. For $1\leq k\leq n$, the operator
$$
S_k[D^2u]:=\sigma_k\big(\lambda(D^2u)\big)
$$
interpolates between the Laplace operator $S_1[D^2u]=\Delta u$ and the Monge--Amp\`ere operator $S_n[D^2u]=\operatorname{det} D^2u$.
On the G{\aa}rding cone $\Gamma_k$, the operator $S_k$ is elliptic, and $S_k^{1/k}$ is concave. Its classical theory includes the Dirichlet results of Caffarelli, Nirenberg and Spruck \cite{CNS1985} and Trudinger \cite{Trudinger1995}, and the Hessian measure theory of Trudinger and Wang \cite{TW1997,TW1999}. In \eqref{eq1.1}, the elementary symmetric function is applied to the first Newton transformation
$$
T_1(A):=\sigma_1(A)I-A=(\operatorname{tr}A)I-A
$$
of a symmetric matrix $A$; see \cite{Reilly1973,GW2006}. Thus $T_1[u]:=T_1(D^2u)=\Delta uI-D^2u$ has eigenvalues
$$
\widetilde\lambda_i=\sum_{j\neq i}\lambda_j,\quad i=1,\ldots,n,
$$
where $\lambda_1,\ldots,\lambda_n$ are the eigenvalues of $D^2u$. We refer to $T_1[u]$ as the real $(n-1)$-Hessian matrix. Its complex counterpart appears in the equations studied in \cite{FWW2010,TW2017,STW2017}.

More generally, sums of $p$ Hessian eigenvalues occur in real $p$-Monge--Amp\`ere equations and are related to the geometric notion of $p$-convexity studied by Sha \cite{Sha1986,Sha1987}. Dinew \cite{Dinew2023} obtained interior gradient and Hessian estimates for $p$-plurisubharmonic solutions of
\begin{equation}\label{p-MA}
\prod_{1\leq i_1<\cdots<i_p\leq n}\left(\lambda_{i_1}+\cdots+\lambda_{i_p}\right)=f(x).
\end{equation}
Chu and Dinew \cite{ChuDinew2023} proved a Liouville theorem for a broader class containing \eqref{p-MA}, and Dong, Niu and Nizhamuding \cite{DNN2026} studied the corresponding Dirichlet problem on Riemannian manifolds. For $p=n-1$, these sums are the eigenvalues of $T_1[u]$. Jiao and Liu \cite{JL2021} obtained second-order estimates and existence results for Hessian type equations on Riemannian manifolds, including equations involving $\sigma_k(\Delta uI-D^2u)$. For the related quotient equations
\begin{equation*}
\frac{\sigma_k(\Delta uI-D^2u)}{\sigma_l(\Delta uI-D^2u)}=f(x,u,Du),
\end{equation*}
Chen, Tu and Xiang established Pogorelov type estimates \cite{CTX2021} and studied the Dirichlet problem \cite{CTX2023}. Further estimates for sum Hessian equations and their quotients appear in \cite{CDH2023,Mei2023,RW2025,RW2026}, and related Neumann problems in \cite{GLT2025,GT2026}. Geometric counterparts include the equations involving $\eta=Hg-h$ studied by Chu and Jiao \cite{CJ2022} and the problems for $p$-convex hypersurfaces considered by Dong \cite{DongCurvature2023,DongDirichlet2024}.

The growth conditions governing entire subsolutions originate in the work of Keller \cite{K1957} and Osserman \cite{O1957}. For positive, continuous and non-decreasing $f$, the equation $\Delta u=f(u)$ admits an entire subsolution if and only if
\begin{equation}\label{nsc,k=1}
\int^{+\infty}\left(\int_0^\tau f(t)dt\right)^{-\frac12}d\tau=+\infty.
\end{equation}
This is the classical Keller--Osserman condition.
For the standard normalized $k$-Hessian equation $S_k^{\frac1k}[D^2u]=f(u)$, Ji and Bao \cite{JB2010} established the corresponding condition
$$
\int^{+\infty}\left(\int_0^\tau f^k(t)dt\right)^{-\frac1{k+1}}d\tau=+\infty.
$$
Bao and Feng \cite{BF2022} extended this theory to $p$-$k$-Hessian equations.
Bao, Ji and Li \cite{BJL2012} studied $k$-Yamabe type equations involving the conformal Hessian, and established existence and nonexistence results for positive entire subsolutions under different growth assumptions.
Additive zero- and first-order terms were treated by Capuzzo Dolcetta, Leoni and Vitolo \cite{CDLV2014,C2016} for inequalities of the form
$$
\mathcal F(x,D^2u)\geq f(u)+g(u)|Du|^q\quad\text{in }\mathbb R^n.
$$
Their model operators include the degenerate maximal Pucci operator and the $k$-partial trace operator. They obtained conditions for entire viscosity subsolutions and local upper bounds when the corresponding integral conditions fail. Bhattacharya and Mohammed \cite{BM2021} studied the unnormalized standard $k$-Hessian equation
$$
S_k[D^2u]=f(u)+g(u)|Du|^q
$$
and used radial blow-up solutions to establish maximum principles on unbounded domains. For $0<q<k+1$, the generalized Keller--Osserman condition in their analysis is
\begin{equation*}
\int_a^{+\infty}\frac{dt}{\left(\int_0^t f(s)ds\right)^{\frac1{k+1}}+\left(\int_0^t g(s)ds\right)^{\frac1{k+1-q}}}<+\infty\quad\text{for some }a>0,
\end{equation*}
under their positivity and monotonicity assumptions. This is a convergence condition for radial blow-up. In \cite{JJL2023}, we established Keller–Osserman type conditions for
$$
S_k^{\frac1k}[D^2u+\mu|Du|I]=f(u)\quad\text{in }\mathbb R^n
$$
in different ranges of $\mu$. Jiang, Trudinger and Xu \cite{JTX2026} developed a Cauchy-problem framework for Keller--Osserman results concerning $k$-Hessian and $\Pi_k$-Hessian equations. For the weighted real $(n-1)$ Monge--Amp\`ere equation
$$
\operatorname{det}^{\frac1n}(\Delta uI-D^2u)=b(x)f(u)\quad\text{in }\mathbb R^n,
$$
our previous work \cite{JJL2025} established the generalized Keller--Osserman condition
\begin{equation}\label{nsc_n-1}
\int^{+\infty}\left(\int_0^\tau f(t)^{\frac n{n-1}}dt\right)^{-\frac{n-1}{2n-1}}d\tau=+\infty,
\end{equation}
and assumptions on $b$ for entire large and bounded solutions. The first part of this series \cite{JJL2026} extended the determinant problem to the additive gradient-dependent inhomogeneous term in \eqref{eq1.1}.

Boundary blow-up problems for Monge--Amp\`ere equations arise in the construction of complete K\"ahler--Einstein metrics and in affine geometry \cite{CY1,CY2}. For the standard real Monge--Amp\`ere equation, infinite boundary values were studied by Lazer and McKenna \cite{LM1996} and Matero \cite{M1996}. Guan and Jian \cite{GJ2004} treated $\operatorname{det} D^2u=\psi(x,u,Du)$ and obtained nearly optimal growth conditions for solvability. Related problems for standard $k$-Hessian equations were studied by Salani \cite{Salani1998} and Jian \cite{J2006}, and further results involving nonlinear gradient terms were obtained in \cite{Zhang2018,MP2021,ZF2022,ZF2023,Wan2026}. For the real $(n-1)$ Monge--Amp\`ere operator, the gradient-independent problem was considered in \cite{JDJ2025}. Ji and Jiang \cite{JJ2026} studied $\operatorname{det}(\Delta uI-D^2u)=\psi(x,u,Du)$ on bounded domains with positive mean curvature boundary, obtaining existence, uniqueness under additional assumptions, and boundary asymptotic estimates. Here we study entire subsolutions and radial boundary blow-up solutions of the additive equation \eqref{eq1.1}.

\subsection{Main Features}

The main results and methods are summarized as follows.
\begin{enumerate}[label=(\roman*)]
\item We establish necessary and sufficient conditions for the existence of admissible entire subsolutions under the structural assumptions on $f$ and $g$. When $q<2$, the Keller--Osserman integrals involve the powers $\frac12$ and $\frac1{2-q}$, independently of $k$. Their agreement with the corresponding semilinear exponents in \cite{C2016} reflects the comparison between $S_k^{\frac1k}[\Delta uI-D^2u]$ and the Laplacian on convex functions. The analysis includes gradient-degenerate and gradient-singular cases. Under condition (B), the conditions depend on the behavior of $g$ at infinity and involve exponential weights when $\lim\limits_{t\to+\infty}g(t)\leq0$. The exponent $q=2$ is structural critical for the form of the radial derivative estimates, and is not a universal threshold for global existence. Indeed, an eventually positive gradient coefficient implies nonexistence of positive admissible entire subsolutions for $1<q\leq2$; see Remark \ref{Rem4.1}.

\item We construct increasing convex radial solutions satisfying the original equation \eqref{eq1.1}. For $2\leq k<n$, we reformulate the implicit radial equation as a cooperative first-order system and apply the method of ordered sub- and supersolutions. This construction does not require local Lipschitz continuity of the coefficients. In the degenerate case $0<q<1$ and $f(a)=0$, it yields immediately increasing solutions without the assumption $f\equiv0$. In the singular case $q<0$, positive lower bounds for the derivative and estimates near the origin provide the required regularity. These local results form the basis for global continuation and comparison with entire subsolutions.

\item We establish quantitative estimates for radial solutions that blow up at a finite maximal radius. When the corresponding Keller--Osserman conditions fail, we derive two-sided bounds for the maximal radius of existence and pointwise upper bounds for the solution profiles expressed by the inverse Keller--Osserman functions. The estimates apply to solutions of the original equation, with the blow-up radius determined by the initial value and the selected radial solution.
\end{enumerate}

\subsection{Organization}

The main results and structural conditions (A) and (B) are presented in Section \ref{SectionMainResults}. In Section \ref{Section2}, we derive the radial equation and establish the admissibility, monotonicity, convexity, and regularity properties needed in the subsequent analysis.
In Section \ref{Section3}, we establish local existence under conditions (A) and (B) by the method of sub‑ and supersolutions.
In Section \ref{Section4}, we derive two-sided derivative estimates and characterize global radial existence.
Section \ref{Section5} extends the comparison argument of Part I to the $k$-Hessian operator, completing the proofs of Theorems \ref{Th1.1} and \ref{Th1.2}.
In Section \ref{Section6}, we study radial boundary blow-up solutions and establish radius and profile estimates.
Finally, Section \ref{Section7} contains further remarks on equivalent Keller–Osserman conditions, comparisons with Part I, and methodological implications.
The notation used in this paper, unless otherwise specified, follows \cite{JJL2026}.

\section{Main Results}\label{SectionMainResults}

Before stating the main theorems, we specify the notions of entire subsolutions used below.

\begin{definition}[Admissible entire subsolutions]\label{def:entire_admissible_subsolutions}
An admissible entire subsolution of \eqref{eq1.1} is understood in one of the following senses.
\begin{enumerate}[label=(\roman*)]
\item A function $u\in C^2(\mathbb R^n)$ is called an {\bfseries\boldmath admissible classical entire subsolution} of \eqref{eq1.1} if $u\in \Phi^k(\mathbb R^n)$ and
$$
S_k^{\frac1k}[\Delta uI-D^2u]\geq f(u)+g(u)|Du|^q,\quad x\in\mathbb R^n.
$$

\item Let $q<0$ and $\delta\geq1$. A function $u\in C^1(\mathbb R^n)\cap C^2(\mathbb R^n\setminus\{0\})\cap W_{loc}^{2,\delta}(\mathbb R^n)$ is called an {\bfseries\boldmath admissible strong entire subsolution} with an isolated singularity at the origin if $u\in\Phi^k(\mathbb R^n\setminus\{0\})$, $|Du|>0$ a.e. in $\mathbb R^n\setminus\{0\}$, and
$$
S_k^{\frac1k}[\Delta uI-D^2u]\geq f(u)+g(u)|Du|^q\quad\text{for a.e. }x\in\mathbb R^n\setminus\{0\}.
$$
\end{enumerate}
\end{definition}
By Definition \ref{def:entire_admissible_subsolutions}, an admissible strong entire subsolution belongs to $C^1(\mathbb R^n)\cap C^2(\mathbb R^n\setminus\{0\})$. Unless otherwise indicated, these requirements are left implicit, and only the Sobolev regularity and the corresponding exponent range are stated explicitly.

We present the main results in two parts. The first part gives sharp Keller--Osserman conditions for the existence of admissible entire subsolutions in $\mathbb R^n$. The second part characterizes positive radial admissible boundary blow-up solutions in balls and provides quantitative estimates for their finite maximal radii and profiles.

\subsection{Entire Subsolutions in the Whole Space}
We first impose the following nonnegative structural condition:
\begin{enumerate}
\item[\textbf{(A)}] {\bfseries\boldmath $f,g\in C(\mathbb R,[0,+\infty))$ are non-decreasing and satisfy $f(t)+g(t)>0$ for all $t>0$.}
\end{enumerate}

Condition (A)  implies that, for any $t>0$, at least one of $f(t)$ and $g(t)$ is strictly positive. However, this condition does not guarantee that the right-hand side of \eqref{eq1.1} is strictly positive at every point. If $f\equiv0$, then $g(t)>0$ for all $t>0$, and the right-hand side of \eqref{eq1.1} reduces to the pure gradient term $g(u)|Du|^q$. When $q>0$, this term vanishes at critical points of $u$. Consequently, every positive constant is a trivial solution of \eqref{eq1.1}.

Our first main result gives sharp Keller--Osserman conditions for solvability of entire subsolutions to equation \eqref{eq1.1} under condition (A).

\begin{theorem}[Entire subsolution under condition (A)]\label{Th1.1}
Assume that $f$ and $g$ satisfy condition (A) and $q\in\mathbb{R}$. Suppose further that either $f(t)>0$ for all $t>0$, or $q<1$.
\begin{enumerate}[label=(\roman*)]
\item Let $1\leq k<n$ and $0\leq q<2$. Equation \eqref{eq1.1} admits a positive nontrivial admissible classical entire subsolution if and only if
\begin{equation*}
\int^{+\infty}\frac{dt}{\left(\int_0^tf(s)ds\right)^{\frac12}+\left(\int_0^tg(s)ds\right)^{\frac1{2-q}}}=+\infty.\tag{$KO_k$}\label{KO_1}
\end{equation*}

\item Let $1\leq k<n$ and $q<0$. Equation \eqref{eq1.1} admits a positive nontrivial admissible strong entire subsolution $u\in W_{loc}^{2,\delta}(\mathbb R^n)$ for $\delta\in[1,1-\frac1q)$ if and only if \eqref{KO_1} holds.
\end{enumerate}
\end{theorem}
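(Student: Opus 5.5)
The plan is to reduce everything to the radial problem and then compare an arbitrary entire subsolution with radial solutions, following the scheme of Part I \cite{JJL2026} adapted to $1\le k<n$.

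\textbf{Radial equation.} For $u(x)=\varphi(r)$ with $r=|x|$, the Hessian has eigenvalues $\varphi''$ (once) and $\varphi'/r$ ($n-1$ times). Hence $T_1[u]$ has eigenvalues
\begin{itemize}
\item $(n-1)\varphi'/r$ (once),
\item $\varphi''+(n-2)\varphi'/r$ ($n-1$ times).
\end{itemize}
Therefore $S_k^{1/k}[T_1[u]]$ becomes an explicit function $F_k(\varphi'',\varphi'/r)$. It is increasing in $\varphi''$ and, for increasing convex $\varphi$, is comparable from above and below to $\varphi''+\varphi'/r$ with constants depending only on $n,k$. This comparability is what makes the exponents $\frac12$ and $\frac1{2-q}$ independent of $k$.

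\textbf{Local existence.} I would solve the Cauchy problem
\[
F_k(\varphi'',\varphi'/r)=f(\varphi)+g(\varphi)|\varphi'|^q,\qquad \varphi(0)=a>0,\quad \varphi'(0)=0.
\]
For $2\le k<n$ I would solve the implicit relation for $\varphi''$ and write the problem as a cooperative first-order system in $(\varphi,\varphi')$. I would then apply ordered sub- and supersolutions, which avoids any Lipschitz assumption on $f,g$.

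The degenerate situation $f(a)=0$ with $q<1$ needs a separate treatment. There I would use the subsolution $c\,r^{2/(2-q)}$-type growth of $\varphi'$ near $0$, so that the solution is immediately increasing. In the singular case $q<0$, I would show $\varphi'\sim c\,r^{1/(1-q)}$ near the origin. This gives $\varphi''\sim r^{q/(1-q)}$, which lies in $L^\delta_{loc}$ exactly when $\delta<1-\frac1q$, and this yields the stated $W^{2,\delta}_{loc}$ regularity.

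\textbf{Global existence (sufficiency).} Monotonicity of $f,g$ together with $\varphi'\ge0$ gives $(\varphi'/r)'\ge0$, so $\varphi''\ge\varphi'/r$ and $\varphi$ is convex. Combining this with the comparability, I would obtain two-sided estimates of the form
\[
\varphi'' \simeq f(\varphi)+g(\varphi)(\varphi')^q.
\]
Multiplying by $\varphi'$ and integrating, separately in the regimes where the $f$ term or the $g$ term dominates, should give
\[
\varphi'(r)\simeq \Big(\int_0^{\varphi}f\Big)^{1/2}+\Big(\int_0^{\varphi}g\Big)^{1/(2-q)}
\]
up to constants and additive terms depending on $a$. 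Integrating $d\varphi/\varphi'$ then shows that the maximal radius is infinite if and only if \eqref{KO_1} holds. So under \eqref{KO_1} the radial solution is a positive entire (classical or strong) solution, hence a subsolution.

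\textbf{Nonexistence (necessity).} Suppose \eqref{KO_1} fails and $u$ is an entire subsolution. Take the radial solution $\varphi$ with $\varphi(0)<u(0)$ small. It blows up at a finite radius $R$. A comparison principle for $S_k^{1/k}[T_1]$ with gradient term on $B_R$, as in Part I, then forces $u\le\varphi$ near $\partial B_R$ wherever the contact argument applies. To run this I would use the strict monotonicity supplied by $f>0$, or by $q<1$ with the strictly increasing radial barrier. Sliding $\varphi(0)$ up to $u(0)$ then gives a contradiction at an interior touching point. In the singular case the contact point is away from the origin, or is handled through $C^1$ matching.

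\textbf{Main obstacle.} The main difficulty is the sharp two-sided derivative estimate for the implicit radial equation when $2\le k<n$. The crossover between the $f$-dominated and $g$-dominated regimes must produce exactly the combined denominator in \eqref{KO_1}, uniformly for $0\le q<2$ and for $q<0$. For this step I would first prove $\varphi'/r\le\varphi''$ and $\varphi''\le C(\varphi''+\varphi'/r)$ and then run the ODE integration.
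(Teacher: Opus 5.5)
Your overall architecture matches the paper's. It consists of a radial reduction, a cooperative first-order system solved by ordered sub- and supersolutions, two-sided derivative estimates, radial global existence $\iff$ \eqref{KO_1}, and the comparison principle of Part I to pass to arbitrary entire subsolutions. The gap is in the step you single out as the crux. You claim $(\varphi'/r)'\ge0$, i.e.\ $\varphi''\ge\varphi'/r$, and use it to turn the comparability of the operator with $\varphi''+\varphi'/r$ into $\varphi''\simeq h:=f(\varphi)+g(\varphi)(\varphi')^q$.

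\textbf{The claim fails for $q<0$.} If $g(a)>0$, your own asymptotics $\varphi'\sim c\,r^{1/(1-q)}$ give $\varphi'/r\sim c\,r^{q/(1-q)}\to+\infty$ as $r\to0^+$, and $r\varphi''/\varphi'\to\frac1{1-q}<1$. A concrete case is $n=2$, $k=1$, $f\equiv0$, $g\equiv1$, $q=-1$. The equation is $\varphi''+\varphi'/r=(\varphi')^{-1}$, with exact solution $\varphi'=\sqrt{2/3}\,r^{1/2}$ and $\varphi''=\frac12\varphi'/r$. The underlying reason is that $g(\varphi)(\varphi')^q$ \emph{decreases} as $\varphi'$ grows, so $h$ is not monotone along the solution.

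Without $\varphi''\gtrsim h$ you have no lower estimate $\varphi'\gtrsim(\int_a^{\varphi}f)^{1/2}+(\int_a^{\varphi}g)^{1/(2-q)}$. Hence you have no proof that radial solutions blow up at a finite radius when \eqref{KO_1} fails, which is exactly what the nonexistence half of (ii) rests on. For $q>0$ your derivation is also circular as written. Monotonicity of $h$ presupposes $\varphi'$ non-decreasing, which is the convexity you deduce from it, so a first-failure argument is needed.

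\textbf{How the paper avoids this.} It never uses $\varphi''\ge\varphi'/r$. Convexity is proved directly in Lemma \ref{Lem2.3}: at the first place where $\varphi''<0$, it compares radii $\sigma<\tau$ with $\varphi'(\sigma)=\varphi'(\tau)$. The lower bound comes from AM--GM on the two factors, which gives $\varphi''+\gamma\varphi'/r\ge\frac{kC_0}{n-1}h$ with $\gamma=n+k-2$. One multiplies by $2r^{2\gamma}\varphi'$ (respectively $(2-q)r^{\gamma(2-q)}(\varphi')^{1-q}$) and integrates over $[r/2,r]$. Convexity then enters in the form $\int_{\varphi(r/2)}^{\varphi(r)}f\ge\frac12\int_a^{\varphi(r)}f$. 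This handles all $q<2$ at once and needs only $\varphi'>0$ and $\varphi''\ge0$.

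\textbf{Repair within your route for $q<0$.} Replace $\varphi'/r$ by $Z:=(\varphi')^{1-q}/r$. Write $h=(\varphi')^qH$ with $H:=f(\varphi)(\varphi')^{-q}+g(\varphi)$.
\begin{enumerate}
\item $H$ is non-decreasing along convex solutions.
\item $Z$ is bounded near $0$, since $\varphi''\le C(\varphi')^q$ there.
\item In $t=\log r$ the equation reads $\dot Z=G(Z,C_0H)$, with $G$ decreasing in $Z$ and increasing in $H$.
\end{enumerate}
A first-failure argument then gives $\dot Z\ge0$, i.e.\ $\varphi''\ge\frac1{1-q}\frac{\varphi'}r$. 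This suffices, with $q$-dependent constants.

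\textbf{Smaller slips.}
\begin{enumerate}
\item In the degenerate case the correct growth is $\varphi'\sim c\,r^{1/(1-q)}$. A barrier with $\varphi'\sim c\,r^{2/(2-q)}$ violates the subsolution inequality near $0$, because $\frac2{2-q}<\frac1{1-q}$.
\item In the singular case, $\varphi'\sim c\,r^{1/(1-q)}$ requires $g(a)>0$. The $W^{2,\delta}$ bound follows in general from $0\le\varphi''\le C(\varphi')^q$ near the origin.
\item No sliding in the center value is needed. The comparison lemma gives $u\le\varphi_a(|x|)$ in $B_R$ for each radial blow-up solution. A center value below the value of $u$ at the center (in case (iv) of Lemma \ref{Lem6.1}, after translating to a point where $Du\neq0$) gives the contradiction directly.
\end{enumerate}
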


We note that the non-negativity assumption on $g$ in condition (A)  can be relaxed. More precisely, when $q>0$, we introduce the following alternative condition, which allows the function $g$ to be sign-changing:
\begin{enumerate}
\item[\textbf{(B)}] {\bfseries\boldmath $f\in C(\mathbb R,[0,+\infty))$ and $g\in C(\mathbb R,\mathbb R)$ are non-decreasing and satisfy $f(t)>0$ for all $t>0$.}
\end{enumerate}
For $0<q\leq2$, define
\begin{equation}\label{P}
P(t):=\frac{(g^-(t))^{\frac2q}}{(f(t))^{\frac2q-1}},\quad t>0.
\end{equation}
This function $P$ is well defined under condition (B).

We now present the second main result.

\begin{theorem}[Entire subsolution under condition (B)]\label{Th1.2}
Assume that $f$ and $g$ satisfy condition (B) and $q>0$.
\begin{enumerate}[label=(\roman*)]
\item Let $1\leq k<n$, $0<q<2$, and $\lim\limits_{t\to+\infty}g(t)>0$. Equation \eqref{eq1.1} admits a positive admissible classical entire subsolution if and only if
\begin{equation*}
\int^{+\infty}\frac{dt}{\left(\int_0^t f(s)ds\right)^{\frac12}+\left(\int_0^t g^+(s)ds\right)^{\frac1{2-q}}}=+\infty.
\tag{$KO_k^{(g^+)}$}\label{KO_2}
\end{equation*}

\item Let $1\leq k<n$, $0<q\leq2$, and $\lim\limits_{t\to+\infty}g(t)\leq0$. Equation \eqref{eq1.1} admits a positive admissible classical entire subsolution if and only if
\begin{equation*}
\int^{+\infty}\frac{dt}{\left(\int_0^t f(s)e^{-\int_s^tP(\tau)d\tau}ds\right)^{\frac12}}=+\infty.
\tag{$KO_k^{(g^-)}$}\label{KO_3}
\end{equation*}
\end{enumerate}
\end{theorem}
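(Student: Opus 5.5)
The proof of Theorem \ref{Th1.2} follows the radial strategy used for Theorem \ref{Th1.1}, with two changes: the sign of $g$ is handled separately in the two cases, and the negative part of $g$ is absorbed through the weight $P$. For $u=\varphi(|x|)$ with $\varphi'\ge0$, the eigenvalues of $\Delta uI-D^2u$ are $(n-1)\varphi'/r$ (simple) and $\varphi''+(n-2)\varphi'/r$ (multiplicity $n-1$). So \eqref{eq1.1} becomes an implicit ODE $F_k(\varphi'',\varphi'/r)=f(\varphi)+g(\varphi)|\varphi'|^q$, which is monotone in both arguments on $\overline\Gamma_k$. Because the solutions are convex and increasing, we have the two-sided comparison
\[
c_{n,k}\bigl(\varphi''+\tfrac{n-1}{r}\varphi'\bigr)\le S_k^{1/k}\le C_{n,k}\bigl(\varphi''+\tfrac{n-1}{r}\varphi'\bigr),
\]
with $c_{n,k},C_{n,k}$ depending only on $n$ and $k$. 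This comparison is what makes the exponents $\frac12$ and $\frac1{2-q}$ independent of $k$.

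The plan has four steps.

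\textbf{Step 1 (local radial solutions).} For each $a>0$ we obtain an increasing convex radial solution with $\varphi(0)=a$, $\varphi'(0)=0$ from the cooperative-system local existence under (B) proved in Section \ref{Section3}. Since $f(a)>0$, the right-hand side is positive near $r=0$ even when $g(a)<0$, so the solution is strictly increasing immediately.

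\textbf{Step 2 (case (i), $\lim g>0$).} Then $g$ is eventually positive, say on $[t_0,\infty)$, and is bounded below by $g(0)$ on $[0,t_0]$. Since $|g^-|\le C$ there and $\varphi'$ is bounded on compact $r$-intervals, the negative part only affects a bounded initial stretch. On the range where $\varphi\ge t_0$, the radial solution is squeezed between solutions of the problems with coefficients $(f,g^+)$. We then multiply the comparison ODE by $\varphi'$ and integrate. The upper inequality gives
\[
(\varphi')^2\lesssim F(\varphi)+(\varphi')^{q}G^+(\varphi),\qquad F(t)=\int_0^tf,\quad G^+(t)=\int_0^tg^+ .
\]
Using $q<2$ and Young's inequality, this yields $\varphi'\lesssim F^{1/2}+(G^+)^{1/(2-q)}$. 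The lower inequality gives the reverse bound for large $r$. Integrating $d\varphi/\varphi'$ then shows that the maximal radius $R_a$ is infinite iff \eqref{KO_2} holds, which gives sufficiency.

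For necessity, we first note that if $u$ is a positive entire subsolution, then so is its spherical average after Part I's averaging/comparison argument, which Section \ref{Section5} extends to $k$-Hessian operators. We compare it with the radial solution starting at a level $a<u(0)$. If \eqref{KO_2} fails, that radial solution blows up at a finite $R_a$, and the comparison principle on $B_{R_a}$ gives a contradiction. Positivity of $g$ at infinity is what makes the comparison monotone in the gradient term.

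\textbf{Step 3 (case (ii), $g\le0$ eventually).} Here the gradient term is a damping. For $s=\varphi'$ with $\varphi$ increasing, the first-integral equation becomes, in terms of $w(t)=(\varphi')^2$ as a function of $t=\varphi$,
\[
\tfrac12 w'(t)\approx f(t)-g^-(t)\,w^{q/2}.
\]
Young's inequality gives $g^-w^{q/2}\le \tfrac q2\,\epsilon w+C\,P(t)$-type splitting, and $P=(g^-)^{2/q}f^{1-2/q}$ is exactly the optimal balance point, where $f$ and $g^-w^{q/2}$ are comparable. Equivalently, we write $g^-w^{q/2}=\theta(t)w$ with $\theta\le P$ when $w\ge f^{2/q}(g^-)^{-2/q}$, and otherwise use $w\lesssim(f/g^-)^{2/q}$ directly. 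The resulting linear differential inequality $w'\ge 2f-2Pw$, together with the reverse inequality obtained via the upper comparison constant, integrates to
\[
w(t)\simeq \int_0^t f(s)e^{-\int_s^tP}\,ds .
\]
Plugging this into $r=\int d\varphi/\sqrt w$ gives the equivalence with \eqref{KO_3}. Necessity again uses averaging plus comparison. The endpoint $q=2$ is allowed here because $g^-\ge0$ only helps, and the exponent $2/q-1=0$ makes $P=g^-$.

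\textbf{Main obstacle.} I expect the two-sided estimate in case (ii) to be hardest, because constants must not enter the exponential weight incompatibly. The lower and upper comparison constants $c_{n,k}$ and $C_{n,k}$ produce $P$ multiplied by different constants, so one needs $\int f e^{-\int cP}$ and $\int f e^{-\int CP}$ to give equivalent divergence. I would first try to prove this equivalence via a monotonicity argument on $f$ and $g$ (both non-decreasing), showing the weighted integral is comparable to $f/P$ on the region where $P$ dominates. This is presumably the content of the equivalence remarks in Section \ref{Section7}.
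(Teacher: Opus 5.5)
Your overall route is the paper's: local increasing convex radial solutions, two-sided derivative bounds, global radial existence iff the Keller--Osserman condition (Theorem~\ref{Th4.1-2}), then comparison with a radial blow-up solution of center value $a<u(0)$. However, the lower estimate in case (ii) fails as you derive it. Along every increasing convex solution the left side of \eqref{sODE} is positive, so $g^-(\varphi)(\varphi')^q<f(\varphi)$, i.e.\ $w<(f/g^-)^{2/q}$ (Lemma~\ref{Lem2.3}~(iii)). The regime $w\ge(f/g^-)^{2/q}$, where you have $\theta\le P$, is therefore never entered. Where the solution actually lives, $\theta\ge P$, and $w\lesssim(f/g^-)^{2/q}$ is an upper bound, useless for $w'\gtrsim f-Pw$. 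Merging the two regimes as $g^-w^{q/2}\le\max\{Pw,f\}$ would cancel the source term $f$ entirely.

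The missing step is the identity $g^-w^{q/2}=(Pw)^{q/2}f^{1-q/2}$ together with weighted Young, $g^-w^{q/2}\le\frac q2Pw+(1-\frac q2)f$. This keeps $\frac q2f$ and gives $\varphi''+\gamma\frac{\varphi'}{r}+\frac q2M_0P(\varphi')^2\ge\frac q2M_0f$ with $M_0=\frac{kC_0}{n-1}$; this is where $\lambda_q$ in \eqref{1prime_k_lower_uni} comes from. The regime fact $w<(f/g^-)^{2/q}$ is what serves the \emph{upper} estimate instead: it gives $g^-w^{q/2}\ge Pw$, hence $\varphi''+c_3^{-1/k}C_0P(\varphi')^2\le c_3^{-1/k}C_0f$ and \eqref{1prime_k_upper_uni}.

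Moreover, in both cases the lower bound cannot be read off from $\frac12w'(t)\approx\cdots$, because $\gamma\varphi'/r$ sits on the wrong side. The paper multiplies by $2r^{2\gamma}\varphi'$ times the exponential integrating factor and integrates over $[r/2,r]$. It then uses convexity and the monotonicity of $t\mapsto f(t)e^{-\lambda\int_t^{\varphi(r)}P}$ to get $\int_a^{\varphi(r)}\le2\int_{\varphi(r/2)}^{\varphi(r)}$. These two ingredients are exactly what make \eqref{KO_2} and \eqref{KO_3} necessary for global radial existence, and your sketch supplies neither.

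Second, the spherical-average step in your necessity argument is false in general. Concavity of $S_k^{1/k}$ gives $S_k^{1/k}[\Delta\bar uI-D^2\bar u]\ge\int\bigl(f(u\circ O)+g(u\circ O)|D(u\circ O)|^q\bigr)\,dO$ for $\bar u=\int u\circ O\,dO$. But bounding this below by $f(\bar u)+g(\bar u)|D\bar u|^q$ needs Jensen, i.e.\ convexity of $f$ and control of the gradient term, none of which is assumed. The step is also unnecessary. Lemma~\ref{Lem6.1} compares the non-radial $u$ directly with $v=\varphi_a(|x|)$ on $B_{R_a}$, linearizing along $\theta u+(1-\theta)v$ inside $\Gamma_k$ and applying the strong maximum principle, which yields $u(0)\le a<u(0)$.

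What drives that comparison is that $t\mapsto f(t)+g(t)|p|^q$ is non-decreasing for each $p$, and that the right-hand side is strictly positive along $v$. Positivity of $g$ at infinity plays no role, which is why the same comparison also serves case (ii).

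Finally, the $\lambda$-independence you single out as the main obstacle is handled in the paper by \cite[Lemma 2.4~(i)]{JJL2026}: $J^-_{\lambda,b}\le J^-_{\mu,b}\le\frac\lambda\mu J^-_{\lambda,b}$ for $0<\mu<\lambda$. Your comparison with $f\min\{t,1/P\}$ also works, via the substitution $t\mapsto2t$ that absorbs the shift from $f(t/2),P(t/2)$ to $f(t),P(t)$, as in Remark~\ref{Rem7.KO_equivalent}~(iii).
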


Under condition (B), when $\lim\limits_{t\to+\infty}g(t)\leq0$, the effect of the nonpositive gradient coefficient is reflected in the exponentially weighted condition \eqref{KO_3}. By \cite[Lemma 2.4~(i)]{JJL2026}, the convergence or divergence of the integral in \eqref{KO_3} is unchanged if the coefficient $1$ in the exponential weight is replaced by any positive constant.

Under the hypotheses of Theorems \ref{Th1.1} and \ref{Th1.2}, if $\lim\limits_{t\to+\infty}g(t)>0$, the integrals in \eqref{KO_1} and \eqref{KO_2} are finite for $1<q<2$. Consequently, Theorems \ref{Th1.1}~(i) and \ref{Th1.2}~(i) imply nonexistence of positive admissible entire subsolutions in this range; see Remark \ref{Rem4.1}. When $q=2$, the gradient-independent and eventually positive cases are summarized in the following corollary.

\begin{corollary}\label{Cor1.critical}
Let $q=2$. Assume that $f$ and $g$ satisfy condition (A) or (B), and that $f(t)>0$ for all $t>0$.
\begin{enumerate}[label=(\roman*)]
\item If $g\equiv0$ on $(0,+\infty)$, then \eqref{eq1.1} admits a positive admissible classical entire subsolution if and only if \eqref{nsc,k=1} holds.
\item If $\lim\limits_{t\to+\infty}g(t)>0$, then \eqref{eq1.1} admits no positive admissible classical entire subsolution.
\end{enumerate}
\end{corollary}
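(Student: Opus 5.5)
Both parts will be deduced from Theorems \ref{Th1.1} and \ref{Th1.2}, together with a direct radial argument at the endpoint $q=2$, where those theorems do not literally apply.

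\textbf{Part (ii).} We reduce to the case $1<q<2$. Suppose $u>0$ is an admissible classical entire subsolution with $q=2$, and set $c:=\lim_{t\to+\infty}g(t)>0$.

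First, we may replace $g$ by a nondecreasing $\tilde g\le g$ with $\tilde g>0$ eventually. Since $u>0$, only values $g(t)$ for $t$ near $\inf u$ or larger are relevant.

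Second, we lower the gradient exponent. The inequality $g(u)|Du|^2\ge g(u)|Du|^{q'}-g(u)$ holds for $q'\in(1,2)$ whenever $|Du|\ge1$. This lets us pass from $q=2$ to a $q'<2$ with a modified coefficient.

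Because this bookkeeping is delicate, I would argue directly instead, in the manner of Remark \ref{Rem4.1}. The plan is to compare $u$ with a radial solution $\varphi$ of the Cauchy problem
$$
S_k^{\frac1k}[\Delta\varphi I-D^2\varphi]=f(\varphi)+g(\varphi)|\varphi'|^2,\qquad \varphi(0)=a<u(0),\quad \varphi'(0)=0.
$$
We then show that $\varphi$ blows up at a finite radius, using the derivative estimates of Section \ref{Section4}.

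In the radial form the operator is bounded above by a multiple of $\varphi''+\varphi'/r$ (the Laplacian comparison for convex radial functions). Hence, once $g\ge c/2$,
$$
\varphi''\ge C\bigl(f(\varphi)+c\varphi'^2\bigr)-\frac{\varphi'}{r}.
$$
Multiplying by $\varphi'$, we obtain for $r\ge r_0$ the bound $\varphi'\ge C e^{c'\varphi}$ for some $c'>0$. The integral $\int^{\infty}e^{-c't}\,dt$ is finite, so $\varphi$ blows up at a finite radius $R$.

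The comparison argument of Section \ref{Section5} then gives $u\le\varphi$ on $B_R$ after a translation or shift of the initial value. This contradicts the finiteness of $u$ on $\partial B_R$. The comparison works because the right-hand side is nondecreasing in $u$, and it is the same argument already used for Theorems \ref{Th1.1} and \ref{Th1.2}.

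\textbf{Part (i).} Here $g\equiv0$ on $(0,+\infty)$, so for positive $u$ the equation reads $S_k^{\frac1k}[\Delta uI-D^2u]\ge f(u)$.

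If (A) holds, we apply Theorem \ref{Th1.1} with $q=0$ and $g\equiv0$. Then \eqref{KO_1} reduces to $\int^{+\infty}\bigl(\int_0^t f\bigr)^{-1/2}dt=+\infty$, which is exactly \eqref{nsc,k=1}.

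If (B) holds, then $\lim g=0\le0$ and $P\equiv0$. Theorem \ref{Th1.2}(ii) applies for $q\in(0,2]$, including $q=2$, and \eqref{KO_3} again becomes \eqref{nsc,k=1}.

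The value of $q$ is irrelevant in this part, since the gradient term vanishes identically on the range of $u$. The subsolution notion and the result therefore do not depend on $q$.

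\textbf{Main obstacle.} The hardest step is deriving the exponential lower bound for $\varphi'$ in part (ii) from the implicit radial equation when $2\le k<n$. I would first use the convexity and monotonicity properties from Section \ref{Section2} to get $\varphi''\ge\varphi'/r$. This makes the $\varphi'/r$ term comparable to $\varphi''$ and absorbable, giving $\varphi''\ge C'\varphi'^2$ for large $r$. From $(1/\varphi')'\le -C'$ it follows that $\varphi'$ blows up in finite $r$, which already gives the contradiction.
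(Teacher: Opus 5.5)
Part (i) is correct and is essentially the paper's argument. The paper applies Theorem~\ref{Th1.2}(ii) with $P\equiv0$ in both cases, since condition (A) plus $f>0$ on $(0,+\infty)$ implies (B). Your use of Theorem~\ref{Th1.1} with $q=0$ under (A) is equally valid: for positive $u$ the inequality does not depend on $q$ when $g\equiv0$ on $(0,+\infty)$.

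Part (ii) has the paper's architecture, namely finite-radius blow-up of the radial Cauchy solution plus comparison (Theorem~\ref{Th1.4}(ii) combined with Theorem~\ref{Th6.1}(i)). However, the step you single out as the crux is not justified.

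\begin{itemize}
\item The inequality $\varphi''\ge\varphi'/r$ says exactly that $\varphi'/r$ is non-decreasing. It does not follow from $\varphi'>0$, $\varphi''\ge0$: for example, $\varphi=\sqrt{1+r^2}$ has $\varphi''<\varphi'/r$ for all $r>0$. Nothing in Section~\ref{Section2} provides it.
\item Under (B) it genuinely fails for solutions near the origin. Take $f\equiv1$ and $g\equiv-1$ near $a$; then $f(\varphi)+g(\varphi)\varphi'^2$ decreases in $r$, and so does $\varphi'/r$.
\item Your earlier claim that multiplying by $\varphi'$ gives $\varphi'\ge Ce^{c'\varphi}$ has the same hole, because the term $-\varphi'/r$ is simply dropped.
\end{itemize}

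The missing observation is that this term can be absorbed for large $r$ using only first-order information.
\begin{itemize}
\item Suppose the maximal solution were global. Then $\varphi\to+\infty$, so $g(\varphi(r))\ge m>0$ for $r\ge r_1$.
\item By \eqref{2prime_lower_qneg} with $q=2$ and $f\ge0$, $\varphi''+\gamma\varphi'/r\ge\kappa\varphi'^2$, where $\kappa:=\frac{kC_0m}{n-1}$ and $\gamma=n+k-2$.
\item Since $\varphi'(r)\ge\varphi'(r_1)>0$ for $r\ge r_1$, you have $\gamma\varphi'/r\le\frac{\kappa}{2}\varphi'^2$ once $r\ge2\gamma/(\kappa\varphi'(r_1))$.
\item Hence $\varphi''\ge\frac{\kappa}{2}\varphi'^2$ for large $r$. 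This gives both your Riccati contradiction $(1/\varphi')'\le-\frac{\kappa}{2}$ and the bound $\varphi'\ge\varphi'(r_2)e^{\frac{\kappa}{2}(\varphi-\varphi(r_2))}$.
\end{itemize}
The paper obtains the exponential bound differently, from the integrating factor $r^{2\gamma}e^{-\lambda\int g}$ on $[\frac{r}{2},r]$, i.e.\ the $q=2$ case of \eqref{1prime_k_lower_2}.

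Two points in the comparison step also need correcting.
\begin{itemize}
\item $u\le v$ in $B_R$ does not contradict finiteness of $u$ on $\partial B_R$; that is perfectly compatible with $v\to+\infty$ there. The contradiction is at the center, $u(0)\le\varphi(0)=a<u(0)$, which is what your choice $a<u(0)$ is for.
\item Lemma~\ref{Lem6.1} requires $\varphi\to+\infty$ at the finite maximal radius. The Riccati argument only shows that the maximal solution is not global, so blow-up of $\varphi'$ does not by itself ``give the contradiction.'' You must add Remark~\ref{Rem3.1} together with the $q=2$ upper bound \eqref{1prime_k_upper} (resp.\ \eqref{1prime_k_upper_2}), which keeps $\varphi'$ bounded while $\varphi$ is bounded, exactly as in the paper's proof of Theorem~\ref{Th1.4}(ii).
\end{itemize}
Alternatively, cite Theorem~\ref{Th6.1}(i) directly. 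An entire subsolution produces an increasing convex global radial solution for some $a>0$, and the absorbed Riccati inequality above rules out every such solution. This is precisely the paper's route.
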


Assertion (i) follows from Theorem \ref{Th1.2}~(ii) with $P\equiv0$. Assertion (ii) follows by combining the finite-radius blow-up result in Theorem \ref{Th1.4}~(ii) with the equivalence established in Theorem \ref{Th6.1}~(i).

If the positivity restriction on the subsolution is to be removed, it is enough to strengthen the assumption to $f(t)>0$ for $t\in\mathbb R$. Under this modification, the same conditions are necessary and sufficient for the existence of admissible entire subsolutions without any sign restriction; see Remark \ref{Rem6.2}.

\subsection{Radial Boundary Blow-Up Solutions in Balls}
We next study boundary blow-up solutions in balls, which constitute the finite-radius counterpart of the entire subsolution problem in $\mathbb{R}^n$. In Theorems \ref{Th1.1} and \ref{Th1.2}, the divergence of the relevant Keller--Osserman integrals describes the existence of entire admissible subsolutions.
We now show that the convergence of the corresponding integrals leads to the existence of positive radial boundary blow-up solutions in balls of finite radius.

Let $\Omega\subset\mathbb R^n$ be a bounded domain and set $d_\Omega(x):=\operatorname{dist}(x,\partial\Omega)$. Consider
\begin{equation}\label{eq1.2_general}
\left\{
\begin{aligned}
&S_k^{\frac1k}[\Delta uI-D^2u]=f(u)+g(u)|Du|^q,\quad &&\text{in }\Omega,\\
&u(x)\to+\infty,\quad &&\text{as }d_\Omega(x)\to0.
\end{aligned}
\right.
\end{equation}
An admissible solution of \eqref{eq1.2_general} is called a boundary blow-up solution, or a large solution.
We restrict our attention to positive radial boundary blow-up solutions in balls.

For $z\in\mathbb R^n$ and $R>0$, let $B_R(z):=\{x\in\mathbb R^n:|x-z|<R\}$. Given an arbitrary center value $a>0$, we seek a finite radius $R>0$ and a positive radial admissible solution of the form $u_a(x)=\varphi_a(|x-z|)$ such that
$u_a(z)=a$ and $u_a(x)\to+\infty$ as $|x-z|\to R^-$.
Since the equation in \eqref{eq1.2_general} is invariant under translations, we may assume, without loss of generality, that $z=0$ and write $B_R:=B_R(0)$. For each $a>0$, we consider
\begin{equation}\label{eq1.2}
\left\{
\begin{aligned}
&S_k^{\frac1k}[\Delta u_aI-D^2u_a]=f(u_a)+g(u_a)|Du_a|^q,&&\text{in }B_R,\\
&u_a(x)\to+\infty,&&\text{as }|x|\to R^-,
\end{aligned}
\right.
\end{equation}
where $u_a(0)=a$.
Here, when $q\geq0$, a solution of \eqref{eq1.2} means a function $u_a\in C^2(B_R)$ that satisfies the differential equation pointwise in $B_R$, together with the prescribed center value and boundary blow-up condition. When $q<0$, a solution is understood as an admissible strong solution in $B_R$ that satisfies the differential equation pointwise in $B_R\setminus\{0\}$ and almost everywhere in $B_R$.

For a radial function $u_a(x)=\varphi_a(|x|)$, equation in \eqref{eq1.2} reduces to the Cauchy problem \eqref{CP} introduced in Section \ref{Section2}.
Throughout this paper, a positive radial admissible boundary blow-up solution is understood in the sense of Definition \ref{Def1.radial_solutions}.

To state the existence conditions for radial boundary blow-up solutions and the corresponding estimates for their blow-up radii, we introduce the following Keller--Osserman functions for each $a>0$.
\begin{definition}[Keller--Osserman functions]\label{Def1.boundary_KO}
Under condition (A), define
\begin{align*}
\Psi_k(a)
&:=\int_a^{+\infty}\frac{dt}{\left(\int_a^t f(s)ds\right)^{\frac12}+\left(\int_a^t g(s)ds\right)^{\frac1{2-q}}}, \quad \text{for }q<2, \\
\Psi_k^{(\lambda)}(a)
&:=\int_a^{+\infty}\frac{dt}{\left(\int_a^t f(s)e^{\lambda\int_s^t g(\tau)d\tau}ds\right)^{\frac12}}, \quad \text{for }q=2\text{ and }\lambda\geq0.
\end{align*}
Under condition (B), define
\begin{align*}
\widetilde\Psi_k(a)
&:=\int_a^{+\infty}\frac{dt}{\left(\int_a^t f(s)ds\right)^{\frac12}+\left(\int_a^t g^+(s)ds\right)^{\frac1{2-q}}},\quad \text{for }0<q<2,\\
\widetilde\Psi_k^{(\lambda)}(a)
&:=\int_a^{+\infty}\frac{dt}{\left(\int_a^t f(s)e^{\lambda\int_s^t g^+(\tau)d\tau}ds\right)^{\frac12}},\quad \text{for }q=2\text{ and }\lambda\geq0,\\
\widetilde\Psi_k^{(-\lambda)}(a)
&:=\int_a^{+\infty}\frac{dt}{\left(\int_a^t f(s)e^{-\lambda\int_s^t P(\tau)d\tau}ds\right)^{\frac12}},\quad \text{for }0<q\leq2\text{ and }\lambda>0.
\end{align*}
\end{definition}
Each function introduced in Definition \ref{Def1.boundary_KO} is non-increasing as a function of the center value $a$ and takes values in $(0,+\infty]$.
In particular, we have the following equivalences:
\begin{align*}
\eqref{KO_1}\text{ fails}&\iff\Psi_k(a)<+\infty,\\
\eqref{KO_2}\text{ fails}&\iff\widetilde\Psi_k(a)<+\infty,\\
\eqref{KO_3}\text{ fails}&\iff\widetilde\Psi_k^{(-\lambda)}(a)<+\infty\quad\text{for any }\lambda>0.
\end{align*}
The first two equivalences follow directly from the definitions, and the last one follows from \cite[Lemma 2.4~(i)]{JJL2026}.

The following two theorems establish sharp necessary and sufficient conditions for the existence of positive radial admissible boundary blow-up solutions of \eqref{eq1.2} and provide quantitative two-sided estimates for their blow-up radii.

\begin{theorem}[Radial boundary blow-up solution under condition (A)]\label{Th1.3}
Assume that $f$ and $g$ satisfy condition (A), and suppose that either $f(t)>0$ for all $t>0$, or $q<1$. Fix $a>0$.
\begin{enumerate}[label=(\roman*)]
\item Let $q<2$. Problem \eqref{eq1.2} admits a positive radial admissible boundary blow-up solution with center value $a$ in some ball $B_R$ if and only if
\begin{equation}\label{KO_4}
\Psi_k(a)<+\infty.
\end{equation}
In this case, there exist constants $C_1,C_2>0$, depending only on $n$, $k$ and $q$, such that the blow-up radius of every such solution satisfies
\begin{equation*}
\frac{\Psi_k(a)}{C_1}\leq R\leq\frac{\Psi_k(a)}{C_2}.
\end{equation*}

\item Let $q=2$. Problem \eqref{eq1.2} admits a positive radial admissible boundary blow-up solution with center value $a$ in some ball $B_R$ if and only if
\begin{equation}\label{KO_5}
\Psi_k^{(\lambda)}(a)<+\infty \quad\text{for any }\lambda>0
\end{equation}
If, in addition, $\Psi_k^{(0)}(a)<+\infty$, then the blow-up radius of every such solution satisfies
\begin{equation*}
\frac{\Psi_k^{(\lambda_1)}(a)}{\sqrt{\lambda_1}}\leq R\leq \frac{\Psi_k^{(0)}(a)}{\gamma_1},
\end{equation*}
where $\lambda_1:=2c_3^{-\frac1k}C_0$ and $\gamma_1:=2^{-(n+k-2)}\left(\frac{kC_0}{n-1}\right)^{\frac12}$.
\end{enumerate}
Furthermore, if $q\geq0$, then $u_a\in C^2(B_R)$. If $q<0$, then $u_a$ is an admissible strong solution satisfying $u_a\in W_{loc}^{2,\delta}(B_R)$ for $\delta\in[1,1-\frac1q)$.
\end{theorem}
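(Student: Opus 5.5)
We reduce Theorem \ref{Th1.3} to the radial Cauchy problem \eqref{CP} and to two-sided bounds for $\varphi_a'$ in terms of the primitives of $f$ and $g$. For a radial function $u_a(x)=\varphi_a(r)$, the eigenvalues of $\Delta uI-D^2u$ are $\frac{(n-1)\varphi'}{r}$, with multiplicity one, and $\varphi''+\frac{(n-2)\varphi'}{r}$, with multiplicity $n-1$. The equation therefore becomes an implicit relation in $\varphi''$ and $\varphi'/r$. The local existence results of Section \ref{Section3} give, for each $a>0$, an increasing convex admissible radial solution on some $[0,r_0)$ with $\varphi_a(0)=a$ and $\varphi_a'(0)=0$. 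Under the alternative ``$f>0$ or $q<1$'' this solution is immediately increasing. When $q<0$ it has the stated $W^{2,\delta}_{loc}$ regularity, with $\delta<1-\frac1q$ coming from $\varphi'(r)\sim r^{1/(1-q)}$ near $0$. We continue it to its maximal interval $[0,R)$. Since $\varphi_a$ is increasing and convex, either $R=+\infty$ or $\varphi_a\to+\infty$ as $r\to R^-$. The existence question is thus the question whether $R<\infty$, and any radial blow-up solution with center $a$ is one of these continued solutions, by uniqueness or by comparison among selected solutions.

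The central step is the derivative estimate, which should be the Section \ref{Section4} two-sided bound. Convexity gives $\varphi'/r\le\varphi''$. Expanding $\sigma_k$ then yields
$$c\,\varphi''\le S_k^{1/k}\le C\,\varphi''$$
for constants depending on $n$ and $k$. We multiply the resulting inequality $C\varphi''\ge f(\varphi)+g(\varphi)(\varphi')^q$ by $\varphi'$ and integrate in $t=\varphi$. Writing $p(t)=\varphi'$, this gives $(p^2)'\gtrsim f+gp^q$. From this we get
$$p(t)\gtrsim\Big(\int_a^t f\Big)^{1/2}+\Big(\int_a^t g\Big)^{1/(2-q)},$$
where the second term is handled by comparing with $p^{2-q}$, which is possible since $q<2$. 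The reverse inequality comes from $c\varphi''\le f+gp^q$ and a Bihari/Gronwall-type argument. This argument uses the monotonicity of $f$ and $g$ and the fact that $\varphi'/r$ is controlled by an average of $\varphi''$. The upper bound on $R$ then follows from
$$R=\int_a^\infty\frac{dt}{p(t)}\le\Psi_k(a)/C_2,$$
and the lower bound from the opposite estimate.

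These bounds give the characterization in part (i). If $\Psi_k(a)<\infty$, then $R<\infty$ and we obtain a blow-up solution. Conversely, if a blow-up solution exists, the lower bound on $p$ forces $\Psi_k(a)\le C_1R<\infty$.

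For $q=2$ the term $gp^2$ is no longer subdominant. The inequality $(p^2)'\ge\lambda g p^2+2f/C$ is linear in $p^2$, and solving it gives
$$p^2\ge c\int_a^t f(s)e^{\lambda\int_s^t g}\,ds.$$
This exponential weight produces $\Psi_k^{(\lambda)}$, with $\lambda_1=2c_3^{-1/k}C_0$ on one side. On the other side, dropping $g\ge0$ gives $R\le\Psi^{(0)}_k(a)/\gamma_1$. Necessity of $\Psi_k^{(\lambda)}<\infty$ for every $\lambda>0$ requires the upper bound on $p$ with a weight $e^{\Lambda\int g}$. The equivalence across $\lambda$ then follows from \cite[Lemma 2.4]{JJL2026}.

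The main obstacle is the upper bound on $\varphi'$, equivalently the lower bound on $R$ and the necessity direction. For $2\le k<n$ the equation couples $\varphi''$ and $\varphi'/r$ implicitly, so one must show that $\varphi'/r$ cannot dominate. The approach is to use convexity together with the monotonicity of $r\mapsto\varphi'/r$, obtained from the cooperative-system structure, and to start by bounding $\varphi'(r)/r\le\varphi''(r)$ on the portion of the interval where $\varphi$ has doubled.
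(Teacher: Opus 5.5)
Your lower bound for $\varphi'$ is where the argument fails, and it is exactly the bound that sufficiency ($\Psi_k(a)<+\infty\Rightarrow R<+\infty$) and the estimate $R\le\Psi_k(a)/C_2$ rely on. You derive it from $S_k^{1/k}\le C\varphi''$, i.e.\ from $\varphi'/r\le\varphi''$, and you attribute this to convexity. It does not follow from convexity. The inequality $\varphi''\ge\varphi'/r$ is equivalent to $r\mapsto\varphi'(r)/r$ being non-decreasing, which is a property of $\varphi'$, not of $\varphi$.

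For $q<0$ the inequality is in fact false. If $g(a)>0$, the solution lies above a subsolution with $\underline\varphi'(r)\sim c\,r^{1/(1-q)}$, so $\varphi'(r)/r\to+\infty$ as $r\to0^+$. Remark \ref{Rem2.2-iv}~(ii) even gives $\varphi''/(\varphi'/r)\to\frac1{1-q}<1$. So the monotonicity of $\varphi'/r$ ``from the cooperative structure'' that you invoke at the end is not available in general.

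For $q\ge0$ under (A), it can be proved as a separate lemma. There $h=f(\varphi)+g(\varphi)(\varphi')^q$ is non-decreasing in $r$. Also $(\varphi'/r)'$ has the sign of $C_0h-(n-1)n^{1/k}\varphi'/r$, and this quantity cannot turn negative given the behavior at the origin (Lemma \ref{Lem2.2}). You do not supply this lemma, and it cannot cover part (i) for $q<0$.

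The missing idea is to avoid any comparison between $\varphi'/r$ and $\varphi''$. Young's inequality on the two factors gives
\[
\Bigl[\varphi''+c_1\frac{\varphi'}{r}\Bigr]^{\frac{k-1}{k}}\Bigl[c_3\varphi''+c_2\frac{\varphi'}{r}\Bigr]^{\frac1k}\le\frac{n-1}{k}\Bigl(\varphi''+(n+k-2)\frac{\varphi'}{r}\Bigr).
\]
With $\gamma=n+k-2$ this yields $(r^{2\gamma}(\varphi')^2)'\ge\frac{2kC_0}{n-1}r^{2\gamma}f(\varphi)\varphi'$, and likewise $(r^{\gamma(2-q)}(\varphi')^{2-q})'\ge\frac{(2-q)kC_0}{n-1}r^{\gamma(2-q)}g(\varphi)\varphi'$. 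Integrate over $[r/2,r]$ using $s^{2\gamma}\ge(r/2)^{2\gamma}$. Then use convexity in the form $\varphi(r/2)-a\le\varphi(r)-\varphi(r/2)$, together with the monotonicity of $f$ and $g$, to get $\int_a^{\varphi(r)}f\le2\int_{\varphi(r/2)}^{\varphi(r)}f$, and similarly for $g$. This works uniformly for all $q<2$.

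You also have the difficulty reversed. The upper bound on $\varphi'$, which gives necessity and $R\ge\Psi_k(a)/C_1$, is the easy direction. The $\varphi'/r$ terms are nonnegative, so the left side of \eqref{sODE} is at least $c_3^{1/k}\varphi''$. Hence $\varphi''\le c_3^{-1/k}C_0[f+g(\varphi')^q]$; multiply by $\varphi'$ (by $(\varphi')^{1-q}$ if $q<0$) and integrate. No Bihari argument and no control of $\varphi'/r$ is needed. Your sentence that the lower bound on $p$ forces $\Psi_k(a)\le C_1R$ also has the direction wrong: that conclusion comes from the upper bound.

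For $q=2$, your bound $p^2\ge c\int_a^tf(s)e^{\lambda\int_s^tg}\,ds$ rests on the same unproved comparison. With the monotonicity lemma above (valid here since $q=2\ge0$) it would give sufficiency directly. Without it, the AM--GM route only controls $\int_{\varphi(r/2)}^{\varphi(r)}$, and the doubling step fails because $t\mapsto f(t)e^{\lambda\int_t^{\varphi(r)}g}$ is not monotone. The paper therefore proves sufficiency in (ii) by a dichotomy. If $g\equiv0$ on $[a,+\infty)$, it uses the unweighted bound and $\Psi_k^{(0)}(a)<+\infty$. Otherwise it picks $T$ with $g(T)>0$, derives $\varphi'\ge A_1e^{A_2\varphi}$ for large $r$, and contradicts $R=+\infty$.

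Finally, a finite maximal radius alone does not give $\varphi\to+\infty$: a bounded increasing convex function can have $\varphi'\to+\infty$. You need the upper derivative bound together with Remark \ref{Rem3.1}. Uniqueness is not available here, since the coefficients are only continuous, but it is also unnecessary: the derivative estimates apply to every increasing convex solution.
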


Let $q<0$. If the center value $a$ satisfies $g(a)>0$, the radial entire subsolutions constructed in Theorem \ref{Th1.1}~(ii) and the radial boundary blow-up solutions constructed in Theorem \ref{Th1.3} belong to $W_{loc}^{2,\delta}(\mathbb R^n)$ and $W_{loc}^{2,\delta}(B_R)$, respectively, for $\delta\in[1,n(1-\frac1q))$. This improved regularity follows from their asymptotic behavior at the origin; see Remark \ref{Rem2.2}.

\begin{theorem}[Radial boundary blow-up solution under condition (B)]\label{Th1.4}
Assume that $f$ and $g$ satisfy condition (B). Fix $a>0$.
\begin{enumerate}[label=(\roman*)]
\item Let $0<q<2$ and $\lim\limits_{t\to+\infty}g(t)>0$.
Problem \eqref{eq1.2} admits a positive radial admissible boundary blow-up solution with center value $a$ in some ball $B_R$ if and only if
\begin{equation*}
\widetilde\Psi_k(a)<+\infty.
\end{equation*}
If, in addition, $g(a)\geq0$, there exist constants $C_1,C_2>0$, depending only on $n$, $k$ and $q$, such that the blow-up radius of every such solution satisfies
\begin{equation*}
\frac{\widetilde\Psi_k(a)}{C_1} \leq R\leq \frac{\widetilde\Psi_k(a)}{C_2}.
\end{equation*}

\item Let $q=2$ and $\lim\limits_{t\to+\infty}g(t)>0$. Problem \eqref{eq1.2} admits a positive radial admissible boundary blow-up solution with center value $a$ in some ball $B_R$.
If, in addition, $g(a)\geq 0$ and $\Psi_k^{(0)}(a)<+\infty$, then the blow-up radius of every such solution satisfies
\begin{equation*}
\frac{\widetilde\Psi_k^{(\lambda_1)}(a)}{\sqrt{\lambda_1}}\leq R \leq \frac{\Psi_k^{(0)}(a)}{\gamma_1},
\end{equation*}
where $\lambda_1$ and $\gamma_1$ are defined in Theorem \ref{Th1.3}~(ii).

\item Let $0<q\leq2$ and $\lim\limits_{t\to+\infty}g(t)\leq0$.
Problem \eqref{eq1.2} admits a positive radial admissible boundary blow-up solution with center value $a$ in some ball $B_R$ if and only if
\begin{equation}\label{KO_7}
\widetilde\Psi_k^{(-\lambda)}(a)<+\infty \quad \text{for any }\lambda>0.
\end{equation}
In this case, the blow-up radius of every such solution satisfies
\begin{equation*}
\frac{\widetilde\Psi_k^{(-\lambda_1)}(a)}{\sqrt{\lambda_1}} \leq R \leq \frac{\widetilde\Psi_k^{(-\lambda_q)}(a)}{\gamma_q},
\end{equation*}
where $\lambda_1$ is defined in Theorem \ref{Th1.3}~(ii), $\lambda_q:=\frac{qkC_0}{n-1}$ and $\gamma_q:=2^{-(n+k-2)}\left(\frac{qkC_0}{2(n-1)}\right)^{\frac12}$.
\end{enumerate}
In all three cases, $u_a\in C^2(B_R)$.
\end{theorem}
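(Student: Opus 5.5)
The plan is to reduce to the radial Cauchy problem \eqref{CP} for $u_a=\varphi_a(|x|)$. The theorem then follows from three ingredients: local existence from Section \ref{Section3}, two-sided derivative estimates along the maximal solution from Section \ref{Section4}, and a comparison that turns those derivative bounds into bounds on the maximal radius $R$.

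For a radial $\varphi$ with $\varphi'(0)=0$, the eigenvalues of $\Delta uI-D^2u$ are $\varphi''+(n-2)\varphi'/r$ (simple) and $\varphi''+(n-1)\varphi'/r$ ($n-1$ times). Under condition (B), $f>0$ on $(0,+\infty)$. Hence the right-hand side is positive wherever $g(\varphi)|\varphi'|^q$ is controlled, and the cooperative-system construction of Section \ref{Section3} gives a local $C^2$ solution with $\varphi'>0$ and $\varphi''>0$ for $r>0$. Here $q>0$, so no singularity occurs at the origin. This is why $u_a\in C^2(B_R)$ in all three cases. We then continue it to a maximal interval $[0,R)$. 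By monotonicity, either $R=+\infty$ or $\varphi_a\to+\infty$ as $r\to R^-$. The latter is exactly a radial boundary blow-up solution, since a bounded monotone solution with bounded derivative could be continued.

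The key estimate uses convexity, $\varphi''\geq \varphi'/r$ up to constants, together with the elementary bounds $\sigma_k(\widetilde\lambda)\approx \varphi''(\varphi'/r)^{k-1}+(\varphi'/r)^k$. From these we derive two-sided inequalities of the form
\[
c\,\varphi'' \leq f(\varphi)+g(\varphi)(\varphi')^q \leq C\,\varphi''
\]
on the region where $\varphi''\ge \varphi'/r$. On the initial layer we use $\varphi'/r\le \varphi''$ with constants $C_0,c_3$. Multiplying by $\varphi'$ and integrating in $t=\varphi$ gives ODE inequalities for $w(t)=(\varphi')^2$ as a function of $t$.

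\textbf{Case (i).} Here $g^+$ dominates, and the inequality is
\[
w'\simeq f+g^+w^{q/2},
\]
which yields
\[
\varphi'(r)\simeq\Big(\int_a^{\varphi}f\Big)^{1/2}+\Big(\int_a^{\varphi}g^+\Big)^{1/(2-q)}.
\]
The term with $g^-$ is handled by absorbing it when $g(a)\ge0$, because $g$ is non-decreasing, so that $g\ge0$ along the solution. Then $R=\int_a^{\infty}dt/\varphi'(r(t))$ is comparable to $\widetilde\Psi_k(a)$.

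\textbf{Case (ii).} For $q=2$ the inequality is linear in $w$: $w'\le \lambda(f+g^+w)$, or $\ge$ with the other constant. Integrating gives $w\simeq \int_a^t f e^{\lambda\int_s^t g}$, and dividing by $\sqrt{\lambda}$ produces $\widetilde\Psi_k^{(\lambda_1)}/\sqrt{\lambda_1}$ as the lower bound. The upper bound comes from dropping $g$ and comparing with $\Psi_k^{(0)}$; the factor $\gamma_1$ arises from the initial-layer loss $2^{-(n+k-2)}$. Existence in case (ii) without a KO hypothesis follows because $\lim g>0$ and $q=2$ force $w'\ge c\,g\,w$ eventually with $g\ge c_0>0$. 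This gives exponential growth of $w$ in $t$, so $\int dt/\sqrt w<\infty$ and $R<\infty$.

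\textbf{Case (iii).} Here $g\le0$, and Young's inequality gives $|g|(\varphi')^q\le \tfrac{q}{2}P\,w+(1-\tfrac q2)f$, up to the normalization in \eqref{P}. This yields the linear inequality $w'\ge c f-\lambda P w$, with a matching upper bound $w'\le Cf$ modulo the same term. Integrating gives the weights $e^{-\lambda\int_s^tP}$, and hence the bounds with $\lambda_1$, $\lambda_q$, $\gamma_q$.

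The "only if" directions follow because the lower estimate of $R$ is finite only when the corresponding $\Psi$ is. Conversely, if the integral diverges, the upper derivative bound forces $R=+\infty$. The main obstacle is the derivative estimate near $r=0$ and wherever $\varphi''$ is not comparable to $\varphi'/r$, since the radial equation is implicit for $k\ge2$. There I would use the concavity/comparison lemma of Section \ref{Section2}: first bound $\varphi'(r)/r$ between $\varphi''$ and its average, then split $[0,R)$ at the point where $\varphi(r)=2a$, paying the constant $2^{-(n+k-2)}$.
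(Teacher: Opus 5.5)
Your architecture matches the paper's: the local solution from Theorem~\ref{Th3.1-2}, maximal continuation with the alternative of Remark~\ref{Rem3.1}, two-sided bounds for $\varphi'$ in terms of $\varphi$, and $R=\int_a^{+\infty}dt/\varphi'$. If you simply cited Lemma~\ref{Lemma4.1-2}, what remains would be essentially the paper's integration. The derivation you actually sketch, however, has genuine gaps.

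\emph{The structure of the operator.} The eigenvalues of $\Delta uI-D^2u$ are $(n-1)\varphi'/r$ (once) and $\varphi''+(n-2)\varphi'/r$ ($n-1$ times). The form $\sigma_k\approx\varphi''(\varphi'/r)^{k-1}+(\varphi'/r)^k$ that you use is the radial structure of $S_k[D^2u]$. The bound $c\varphi''\le h$ does not follow from it, and it would lead to the $k$-dependent exponents of the standard $k$-Hessian problem. What the proof actually uses is $c_3=n-k>0$. For increasing convex $\varphi$ this gives
\[
c_3^{\frac1k}\varphi''\le C_0h\le\frac{n-1}{k}\Bigl(\varphi''+(n+k-2)\frac{\varphi'}{r}\Bigr),\qquad h:=f(\varphi)+g(\varphi)(\varphi')^q,
\]
see \eqref{2prime_upper} and \eqref{2prime_lower_qneg}.

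\emph{The pointwise lower bound.} Convexity does not give $\varphi''\ge c\,\varphi'/r$, since $\varphi'(r)/r$ is only the mean of $\varphi''$ over $[0,r]$. Your bound $h\le C\varphi''$ in fact fails along actual solutions. Take $f\equiv1$ and $g\equiv-1$. Then $P\equiv1$ and \eqref{KO_3} holds, so the solution is global, and $0<\varphi'<1$ by Lemma~\ref{Lem2.3}~(iii). Hence $\varphi''$ is integrable on $[1,+\infty)$, whereas $C_0h\ge c\,\varphi'/r\ge c\,\varphi'(1)/r$ is not.

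Consequently the inequalities $w'\ge c(f+g^+w^{q/2})$ in (i), $w'\ge c\,g\,w$ in (ii), and $w'\ge cf-\lambda Pw$ in (iii) are unjustified. Splitting at $\varphi=2a$ does not fix this.

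The paper never compares $\varphi''$ with $\varphi'/r$ pointwise:
\begin{enumerate}[label=(\alph*)]
\item It multiplies the right-hand inequality by $2r^{2\gamma}\varphi'$, with $\gamma=n+k-2$. For the $g$-term it uses $(2-q)r^{\gamma(2-q)}(\varphi')^{1-q}$ instead, and an exponential factor when $q=2$ or when $P$ enters.
\item It uses $(r^{2\gamma}(\varphi')^2)'=2r^{2\gamma}\varphi'(\varphi''+\gamma\varphi'/r)$ and integrates over $[r/2,r]$.
\item Since $f(\varphi)\varphi'$ is non-decreasing, it gets $\int_{\varphi(r/2)}^{\varphi(r)}f\ge\frac12\int_a^{\varphi(r)}f$.
\end{enumerate}
This is where $2^{-(n+k-2)}$ comes from. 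The same integration over $[\frac{r_0+r}{2},r]$ gives $\varphi'\ge Ce^{c\varphi}$ in the existence part of (ii).

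\emph{Case (iii), upper bound.} The upper bound with weight $e^{-\lambda_1\int_t^{\varphi}P}$ cannot come from Young's inequality, which bounds $g^-(\varphi)(\varphi')^q$ only from above. You need the reverse inequality $g^-(\varphi)(\varphi')^q\ge P(\varphi)(\varphi')^2$. It follows from the a priori bound $\varphi'<(f(\varphi)/g^-(\varphi))^{1/q}$ of Lemma~\ref{Lem2.3}~(iii), which comes from positivity of the left side of \eqref{sODE}. Without it you only get $(\varphi')^2\le\lambda_1\int_a^{\varphi}f$, hence $R\ge\Psi_k^{(0)}(a)/\sqrt{\lambda_1}$. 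Because $\Psi_k^{(0)}\le\widetilde\Psi_k^{(-\lambda)}$, this proves neither the necessity of \eqref{KO_7} nor the stated lower bound for $R$. Moving between $\lambda_1$, $\lambda_q$ and ``any $\lambda>0$'' also needs the parameter comparison from Part I (Lemma 2.4~(i) there).

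\emph{Case (i) with $g(a)<0$.} Absorbing $g^-$ works only when $g(a)\ge0$, but the equivalence in (i) is claimed for every $a>0$. For sufficiency when $g(a)<0$, argue as in Theorem~\ref{Th4.1-2}~(i). A global solution has $\varphi\to+\infty$, so $g(\varphi)>0$ beyond some $r_0$. Restarting the lower estimate at $A=\varphi(r_0)$ then contradicts $\widetilde\Psi_k(a)<+\infty$.

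\emph{Sufficiency in general.} Your closing paragraph states the ``only if'' direction twice, once directly and once as its contrapositive. Sufficiency needs the lower derivative bound to force $R<+\infty$. It then needs the upper bound to exclude the branch of Remark~\ref{Rem3.1} in which $\varphi$ stays bounded while $\varphi'\to+\infty$.
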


With the existence conditions and the two-sided estimates for the blow-up radii established, we next derive pointwise upper bounds for radial boundary blow-up solutions in terms of the distance to the boundary. For $x\in B_R$, set $d(x):=\operatorname{dist}(x,\partial B_R)=R-|x|$.

We introduce the following additional assumptions:
\begin{enumerate}
\item[\textbf{(C1-A)}] At least one of $f$ and $g$ is strictly increasing on $[0,+\infty)$, and $\lim\limits_{t\to+\infty}(f(t)+g(t))=+\infty$.

\item[\textbf{(C1-B)}] At least one of $f$ and $g^+$ is strictly increasing on $[0,+\infty)$, and $\lim\limits_{t\to+\infty}(f(t)+g^+(t))=+\infty$.

\item[\textbf{(C2)}] The function $f$ is strictly increasing on $[0,+\infty)$, and $\lim\limits_{t\to+\infty}f(t)=+\infty$.
\end{enumerate}

Suppose that $\Psi:(0,+\infty)\to(0,+\infty)$ is continuous and strictly decreasing, with
\begin{equation}\label{Psi_0}
\lim\limits_{a\to+\infty}\Psi(a)=0.
\end{equation}
Set $\Psi(0^+):=\lim\limits_{b\to0^+}\Psi(b)\in(0,+\infty]$. Define 
\begin{equation}\label{generalized_KO_inverse}
\Psi^{-1}(s):=\inf\left\{b>0:\Psi(b)\leq s\right\},\quad s>0.
\end{equation}
By \eqref{Psi_0}, the set in \eqref{generalized_KO_inverse} is nonempty for every $s>0$. Hence, $\Psi^{-1}:(0,+\infty)\to[0,+\infty)$ is well defined and continuous. The inverse $\Psi^{-1}$ is non-increasing on $(0,+\infty)$ and strictly decreasing on $(0,\Psi(0^+))$. If $\Psi(0^+)<+\infty$, then $\Psi^{-1}(s)=0$ for any $s\geq\Psi(0^+)$.
Moreover,  \eqref{Psi_0} gives
\begin{equation*}
\lim\limits_{s\to0^+}\Psi^{-1}(s)=+\infty.
\end{equation*}

\begin{theorem}[Blow-up profiles under condition (A)]\label{Th6.profileA}
Assume the hypotheses of Theorem \ref{Th1.3}, and let $u_a$ be a positive radial admissible boundary blow-up solution of \eqref{eq1.2} in $B_R$ with $u_a(0)=a>0$.
\begin{enumerate}[label=(\roman*)]
\item Let $q<2$, and suppose that condition (C1-A) holds. Then
$$
u_a(x)\leq\Psi_k^{-1}\bigl(C_2d(x)\bigr),\quad x\in B_R,
$$
where $C_2>0$ depends only on $n$, $k$, and $q$.

\item Let $q=2$, and suppose that condition (C2) holds and $\Psi_k^{(0)}(a)<+\infty$. Then
$$
u_a(x)\leq\left(\Psi_k^{(0)}\right)^{-1}\bigl(\gamma_1d(x)\bigr),\quad x\in B_R,
$$
where $\gamma_1:=2^{-(n+k-2)}\left(\frac{kC_0}{n-1}\right)^{\frac12}$.
\end{enumerate}
\end{theorem}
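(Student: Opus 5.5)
The plan is to turn the radius estimate of Theorem \ref{Th1.3} into a pointwise bound by re-centering it at every interior radius. Fix $x\in B_R$ with $|x|=r$ and write $b:=u_a(x)=\varphi_a(r)$. Radial solutions from Section \ref{Section2} are increasing and convex, so $b\geq a>0$.

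\emph{Step 1 (restricted profile).} The function $\tilde\varphi(\rho):=\varphi_a(r+\rho)$, $0\leq\rho<R-r$, starts at value $b$ and blows up exactly at $\rho=R-r=d(x)$. It is not a radial solution centred at a point, since its initial slope $\varphi_a'(r)$ is positive. However, the upper-bound half of the radius estimate should only use the radial ODE and the two-sided derivative estimates of Section \ref{Section4}. Concretely, I expect a lower bound of the form
$$
\varphi'\geq C\Bigl[\Bigl(\int_{b}^{\varphi} f\Bigr)^{\frac12}+\Bigl(\int_{b}^{\varphi} g\Bigr)^{\frac1{2-q}}\Bigr]
$$
on $[r,R)$. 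To obtain it, I would integrate the derivative inequality from $r$ rather than from $0$, using $\varphi_a'(r)\geq0$ and the monotonicity of $f$ and $g$.

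\emph{Step 2 (integration).} Dividing by the bracket and integrating in $\rho$ from $r$ to $R$ gives
$$
C_2\,d(x)=C_2(R-r)\leq\int_{b}^{+\infty}\frac{dt}{(\int_b^t f)^{1/2}+(\int_b^t g)^{1/(2-q)}}=\Psi_k(b).
$$
This is the same computation as in the proof of $R\leq \Psi_k(a)/C_2$ in Theorem \ref{Th1.3}, with $a$ replaced by $b$ and the constant unchanged. In case (ii), the same argument with the $q=2$ lower bound gives $\gamma_1 d(x)\leq\Psi_k^{(0)}(b)$. Here finiteness of $\Psi_k^{(0)}(b)$ follows from $\Psi_k^{(0)}(a)<+\infty$ and monotonicity in the center value.

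\emph{Step 3 (inversion).} Conditions (C1-A), respectively (C2), together with the finiteness of $\Psi_k(a)$, ensure that $\Psi=\Psi_k$ (resp. $\Psi_k^{(0)}$) is continuous and strictly decreasing with $\Psi(b)\to0$ as $b\to+\infty$. Strictness comes from a strictly increasing coefficient: for $b<b'$ the integrands are pointwise ordered, with strict inequality somewhere. The decay comes from $f+g\to\infty$, by a scaling or dominated convergence estimate. Restricting to $[a,\infty)$ if needed, $\Psi$ can be extended so that the framework preceding \eqref{generalized_KO_inverse} applies. From $\Psi(b)\geq C_2 d(x)$ and strict monotonicity, every $b'$ with $\Psi(b')\leq C_2 d(x)$ satisfies $b'\geq b$. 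Hence $b\leq\Psi^{-1}(C_2d(x))$, which is the claim.

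\emph{Main obstacle.} I expect Step 1 to be the hardest: checking that the derivative lower bound from Section \ref{Section4} survives when integrated from an interior point $r>0$ instead of the origin. The $k$-Hessian radial equation couples $\varphi''$ with $\varphi'/r$, and the estimate at $r=0$ used $\varphi'(0)=0$. My first attempt is to use convexity, which gives $\varphi'/\rho\geq$ something, and $\varphi''\geq$ the normalized operator bound $c\,(f+g|\varphi'|^q)$ over the range where $\varphi'$ dominates. Then I would multiply by $\varphi'$ and integrate from $r$, discarding the nonnegative term $\varphi'(r)^2$. The constant is the same $C_2$ because the structural constants do not depend on the starting point.
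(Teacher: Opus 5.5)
Your overall route is the paper's: integrate a lower bound for $\varphi_a'$ over $[r,R)$, dominate the result by $\Psi_k(\varphi_a(r))$ (resp.\ $\Psi_k^{(0)}(\varphi_a(r))$), and invert using Lemma \ref{Lem6.KO_inverse}. Steps 2 and 3 are fine. The problem is Step 1, which you identify as the crux: your plan for proving it does not work as sketched.

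The estimates of Section \ref{Section4} give no pointwise lower bound on $\varphi''$ alone. The only lower bound is \eqref{2prime_lower_qneg}, which controls $\varphi''+\gamma\varphi'/\rho$, and \eqref{2prime_upper} points the wrong way. If you integrate \eqref{eq3.3_qneg} against $2\rho^{2\gamma}\varphi_a'$ starting from the interior point $r$, you obtain
$$
(\varphi_a'(\rho))^2\geq\frac{2kC_0}{n-1}\Bigl(\frac r\rho\Bigr)^{2\gamma}\int_{\varphi_a(r)}^{\varphi_a(\rho)}f(t)\,dt,\quad r<\rho<R.
$$
Here the factor $(r/\rho)^{2\gamma}\geq(r/R)^{2\gamma}$ degenerates as $r\to0$. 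If instead you multiply only by $\varphi_a'$, the term $\gamma\int_r^\rho(\varphi_a')^2/s\,ds$ lands on the side that would need an upper bound. So your assertion that ``the constant is the same $C_2$'' is not supported by the argument you describe.

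The missing observation is that no re-derivation from an interior point is needed. Under condition (A) we have $f,g\geq0$, and $b=\varphi_a(r)\geq a$. Hence, for $t\geq b$,
$$
\int_a^tf\geq\int_b^tf\quad\text{and}\quad\int_a^tg\geq\int_b^tg.
$$
Therefore \eqref{1prime_k_lower}, already proved with base point $a$, immediately yields your base-$b$ bound on $[r,R)$ with the same $C_2$. That estimate was obtained by integrating over $[\rho/2,\rho]$ and using convexity, and this is what keeps its constant universal.

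Equivalently, as the paper does, integrate \eqref{1prime_k_lower} from $r$ to $R$ to get
$$
C_2d(x)\leq\int_b^{+\infty}\frac{dt}{\bigl(\int_a^tf\bigr)^{\frac12}+\bigl(\int_a^tg\bigr)^{\frac1{2-q}}}.
$$
Then enlarge the integrand by replacing $a$ with $b$, which gives $\Psi_k(b)$. The same comparison applied to \eqref{6.1} handles case (ii). With this one-line fix, the shifted profile $\tilde\varphi$ and the concern about $\varphi_a'(r)>0$ become unnecessary, and the rest of your proposal goes through.
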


\begin{theorem}[Blow-up profiles under condition (B)]\label{Th6.profileB}
Assume the hypotheses of Theorem \ref{Th1.4}, and let $u_a$ be a positive radial admissible boundary blow-up solution of \eqref{eq1.2} in $B_R$ with $u_a(0)=a>0$.
\begin{enumerate}[label=(\roman*)]
\item Let $0<q<2$ and $\lim\limits_{t\to+\infty}g(t)>0$, and suppose that condition (C1-B) holds. Then
$$
u_a(x)\leq\max\left\{t_0,\widetilde\Psi_k^{-1}\bigl(C_2d(x)\bigr)\right\},\quad x\in B_R,
$$
where $t_0:=\inf\{t>0:g(t)\geq0\}$ and $C_2>0$ depends only on $n$, $k$, and $q$.

\item Let $q=2$ and $\lim\limits_{t\to+\infty}g(t)>0$, and suppose that condition (C2) holds and $\Psi_k^{(0)}(a)<+\infty$. Then
$$
u_a(x)\leq\max\left\{t_0,\left(\Psi_k^{(0)}\right)^{-1}\bigl(\gamma_1d(x)\bigr)\right\},\quad x\in B_R,
$$
where $t_0:=\inf\{t>0:g(t)\geq0\}$ and $\gamma_1:=2^{-(n+k-2)}\left(\frac{kC_0}{n-1}\right)^{\frac12}$.

\item Let $0<q\leq2$ and $\lim\limits_{t\to+\infty}g(t)\leq0$, and suppose that condition (C2) holds. Then
$$
u_a(x)\leq\left(\widetilde\Psi_k^{(-\lambda_q)}\right)^{-1}\bigl(\gamma_qd(x)\bigr),\quad x\in B_R,
$$
where $\lambda_q:=\frac{qkC_0}{n-1}$ and $\gamma_q:=2^{-(n+k-2)}\left(\frac{qkC_0}{2(n-1)}\right)^{\frac12}$.
\end{enumerate}
\end{theorem}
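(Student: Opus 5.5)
The idea is to reduce the profile bound to the upper radius estimate of Theorem \ref{Th1.4}, applied to re-centred balls and sub-solutions. Fix $x\in B_R$ with $r=|x|$ and let $b:=u_a(x)=\varphi_a(r)$. Since $\varphi_a$ is increasing and convex (Section \ref{Section2}), the radial Cauchy problem \eqref{CP} with initial data at $r$ behaves like a problem started at value $b$. The derivative estimates of Section \ref{Section4} were used to obtain $R\le\widetilde\Psi_k^{(\cdot)}(a)/\gamma$. Run from the radius $r$ instead of $0$, with $\varphi_a'(r)\ge0$ as initial slope, they should give
\[
R-r\le \frac{\widetilde\Psi(b)}{\gamma}.
\]
Here $\widetilde\Psi$ is the relevant Keller--Osserman function: $\widetilde\Psi_k$ with constant $C_2$ in (i), $\Psi_k^{(0)}$ with $\gamma_1$ in (ii), and $\widetilde\Psi_k^{(-\lambda_q)}$ with $\gamma_q$ in (iii).

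Concretely, I would revisit the lower bound on $\varphi'$ from Section \ref{Section4}. Integrating the differential inequality for $(\varphi')^2$ (or $(\varphi')^{2-q}$) from $r$ instead of from $0$ yields
\[
\varphi'(s)\ge \gamma\Big(\int_b^{\varphi(s)}f\Big)^{1/2}
\quad\text{or}\quad
\varphi'(s)\ge \gamma\Big(\int_b^{\varphi(s)}g^+\Big)^{1/(2-q)},
\]
with the exponential weight in case (iii). The extra initial slope only helps, because every term is non-decreasing in $\varphi'$ under monotonicity. Dividing and integrating $d\varphi/\text{(lower bound)}$ from $b$ to $+\infty$ gives $\gamma(R-r)\le\widetilde\Psi(b)$.

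The main point of this step is that the constants must not depend on the starting radius. The factor $2^{-(n+k-2)}$ presumably comes from comparing $\varphi'/s$ with $\varphi''$ on $[s/2,s]$. I would check that when the starting point is $r>0$ instead of $0$, the same comparison works, perhaps using $\varphi'(s)/s\ge$ an average of $\varphi''$ over $[0,s]$. This uses convexity together with $\varphi'(0)=0$, which remains true since the solution is global on $[0,R)$.

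A second subtlety appears in (i) and (ii), where $g$ may be negative for small values. The estimate uses $g^+$, and the inequality $g(\varphi)|\varphi'|^q\ge0$ needed for dropping or keeping terms requires $b\ge t_0$. So I would split into two cases. If $u_a(x)\le t_0$ the bound is trivial. Otherwise every $s\ge r$ has $\varphi(s)\ge b>t_0$, hence $g(\varphi(s))\ge0$, and the argument above applies on $[r,R)$; this is why the statement involves $\max\{t_0,\cdot\}$.

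Finally I would invert the inequality. Conditions (C1-B) and (C2) make $\widetilde\Psi$ continuous and strictly decreasing on the relevant range, with limit $0$ at infinity. Indeed, after substituting $t=b+\tau$, the integrand decreases strictly in $b$ and tends to zero by dominated convergence using $f\to\infty$ (or $f+g^+\to\infty$). Hence \eqref{Psi_0} holds and \eqref{generalized_KO_inverse} applies. From $\widetilde\Psi(b)\ge\gamma d(x)$ and the definition of $\Psi^{-1}$ as an infimum together with strict monotonicity, we get $b\le\widetilde\Psi^{-1}(\gamma d(x))$, which is the claimed bound.
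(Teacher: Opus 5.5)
Your architecture is the paper's. You split on whether $u_a(x)\le t_0$, bound $\varphi_a'$ from below on $[r,R)$ by a Keller--Osserman integral with lower limit $b=\varphi_a(r)$, integrate from $r$ to $R$ to get $\gamma\,d(x)\le\widetilde\Psi(b)$, and invert. Your inversion via strict monotonicity and the infimum definition is correct.

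The gap is the step you flag but do not resolve: the lower bound on $[r,R)$ with a constant independent of $r$. The mechanism you describe does not give it. With $M_0=\frac{kC_0}{n-1}$ and $\gamma=n+k-2$, integrating $(s^{2\gamma}(\varphi')^2)'\ge 2M_0s^{2\gamma}f(\varphi)\varphi'$ from $r$ to $s$ yields only $\varphi'(s)^2\ge 2M_0(r/s)^{2\gamma}\int_b^{\varphi(s)}f$. The factor $(r/s)^{2\gamma}$ degenerates as $s/r\to\infty$, for instance when $|x|$ is small compared with $R$. The ``re-centring'' heuristic fails exactly here, because \eqref{sODE} contains $\varphi'/r$ and is not invariant under shifts in $r$. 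The identity $\varphi'(s)/s=\frac1s\int_0^s\varphi''$ gives no pointwise bound of $\gamma\varphi'/s$ by $\varphi''(s)$, so it cannot absorb that term either.

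The missing observation is that you need not restart at $r$. The estimates \eqref{1prime_k_lower}, \eqref{1prime_k_lower_2}, \eqref{1prime_k_lower_uni} already have uniform constants. They are integrated over $[s/2,s]$, and then convexity with $\varphi(0)=a$ gives $\int_a^{\varphi(s)}\le 2\int_{\varphi(s/2)}^{\varphi(s)}$. All the integrands are nonnegative, including $f(t)e^{-\lambda_q\int_t^{\varphi(s)}P(\tau)d\tau}$. Hence raising the lower limit from $a$ to $b\ge a$ only shrinks the right-hand side, and the bound with lower limit $b$ and the same $C_2,\gamma_1,\gamma_q$ is immediate. This is how the paper handles (iii), and likewise Theorem \ref{Th6.profileA}.

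In (i)--(ii) with $g(a)<0$, \eqref{1prime_k_lower_2} is not available from the origin; the paper simply says ``apply \eqref{1prime_k_lower_2} on $[r,R)$'' here. To make this precise, keep the integration window inside $[r,s]$, where $g(\varphi)\ge 0$ and \eqref{2prime_lower_qneg} splits into separate $f$- and $g^+$-inequalities. Concretely, integrate over $[\max\{r,s/2\},s]$:
\begin{itemize}
\item For $s\le 2r$, the weight factor is $(r/s)^{2\gamma}\ge 2^{-2\gamma}$, respectively $(r/s)^{\gamma(2-q)}\ge 2^{-\gamma(2-q)}$.
\item For $s>2r$, convexity gives $\varphi(s)-\varphi(s/2)\ge\varphi(s/2)-\varphi(0)\ge\varphi(s/2)-b$. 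Hence $\int_b^{\varphi(s)}f\le 2\int_{\varphi(s/2)}^{\varphi(s)}f$, and likewise for $g^+$.
\end{itemize}
This recovers the constants of Lemma \ref{Lemma4.1}.

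Finally, in (i) the $f$- and $g^+$-bounds hold simultaneously, not ``or''. Combine them using that the maximum dominates half the sum.
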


These results establish the alternative between global existence and finite-radius blow-up for the radial solutions constructed in this paper. Under the respective hypotheses, they also provide two-sided estimates for the blow-up radius and pointwise upper bounds for the solution in terms of inverse Keller--Osserman functions. The proofs are presented in Section \ref{Section6}.

The boundary blow-up analysis is restricted to radial solutions in balls. Existence on general bounded domains, uniqueness, and higher-order boundary expansions are beyond the scope of the present work. For related results concerning the determinant case $k=n$ with a general gradient-dependent inhomogeneous term on bounded domains, we refer to \cite{JJ2026}.


\section{Preliminaries on Radial Solutions}\label{Section2}

We derive the radial equation and establish the origin regularity, monotonicity, and convexity needed below. Throughout, $r=|x|$ for $x\in\mathbb R^n$.

We first recall the standard eigenvalue computation for radial functions; see Lemma 2.1 of \cite{JJL2025} for details.

\begin{lemma}\label{Lem2.1}
Suppose that $\varphi \in C^2[0,+\infty)$ and $\varphi^\prime(0)=0$. Then for $u(x)=\varphi(r)$, we have $u(x)\in C^2(\mathbb{R}^n)$ and the eigenvalues of $\Delta u I - D^2u$ are given by
\begin{equation}\label{2.1}
\lambda(\Delta uI-D^2 u)
=\left\{
\begin{aligned}
&\left( (n-1)\frac{\varphi^{\prime}}{r}, \varphi^{\prime\prime}+(n-2)\frac{\varphi^{\prime}}{r}, \ldots, \varphi^{\prime\prime}+(n-2)\frac{\varphi^{\prime}}{r}\right),&& r>0,\\
&\left( (n-1)\varphi^{\prime\prime}(0), (n-1)\varphi^{\prime\prime}(0), \ldots, (n-1)\varphi^{\prime\prime}(0)\right),&& r=0.
\end{aligned}
\right.
\end{equation}
Consequently,
\begin{equation}\label{2.2}
\begin{split}
&S_k[\Delta u I - D^2 u]\\
&=\left\{
\begin{aligned}
&C_{n-1}^{k-1} (n-1)\frac{\varphi^{\prime}}{r}  \left[\varphi^{\prime\prime}+ (n-2)\frac{\varphi^{\prime}}{r} \right]^{k-1} + C_{n-1}^k \left[\varphi^{\prime\prime}+ (n-2)\frac{\varphi^{\prime}}{r}\right]^k,&& r >0,\\
&C_n^k \left( (n-1)\varphi^{\prime\prime}(0)\right)^k,&& r=0,
\end{aligned}
\right.
\end{split}
\end{equation}
where $C_n^k=\frac{n!}{(n-k)!k!}$.
\end{lemma}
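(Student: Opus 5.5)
The plan is to compute $D^2u$ for $u(x)=\varphi(r)$ directly, read off its eigenvalues, shift them by the Laplacian to get the eigenvalues of $\Delta uI-D^2u$, and then evaluate $\sigma_k$ of a vector with only two distinct entries.

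\textbf{Regularity.} For $x\neq0$ the chain rule gives
\[
D_iu=\varphi'(r)\frac{x_i}{r},\qquad D_{ij}u=\frac{\varphi'(r)}{r}\Bigl(\delta_{ij}-\frac{x_ix_j}{r^2}\Bigr)+\varphi''(r)\frac{x_ix_j}{r^2}.
\]
Since $\varphi'(0)=0$ and $\varphi\in C^2[0,+\infty)$, we have $\varphi'(r)/r=\frac1r\int_0^r\varphi''(s)\,ds\to\varphi''(0)$ as $r\to0^+$. Hence $D^2u(x)\to\varphi''(0)I$. A first-order Taylor expansion shows that $Du$ exists at the origin with $Du(0)=0$. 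The difference quotients of $Du$ at $0$ equal $\frac{\varphi'(r)}{r}\,\frac{x}{r}\cdot$ (direction), which gives $D^2u(0)=\varphi''(0)I$. Together with the continuity of $D^2u$ away from $0$, this yields $u\in C^2(\mathbb R^n)$. This limiting step at the origin is the only delicate point, and it is handled entirely by $\varphi'(0)=0$.

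\textbf{Eigenvalues.} For $r>0$, write $D^2u=\frac{\varphi'}{r}\bigl(I-\nu\otimes\nu\bigr)+\varphi''\,\nu\otimes\nu$ with $\nu=x/r$. Its eigenvalues are $\varphi''$ in the direction $\nu$ and $\varphi'/r$ with multiplicity $n-1$ on $\nu^\perp$. Thus $\Delta u=\varphi''+(n-1)\varphi'/r$. The matrix $\Delta uI-D^2u$ has the same eigenvectors, with eigenvalue $\Delta u-\varphi''=(n-1)\varphi'/r$ along $\nu$ and $\Delta u-\varphi'/r=\varphi''+(n-2)\varphi'/r$ on $\nu^\perp$. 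At $r=0$ we have $D^2u=\varphi''(0)I$, so $\Delta uI-D^2u=(n-1)\varphi''(0)I$. This proves \eqref{2.1}.

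\textbf{Computing $S_k$.} Set $\lambda=(\alpha,\beta,\dots,\beta)$ with $\alpha=(n-1)\varphi'/r$ and $\beta=\varphi''+(n-2)\varphi'/r$. Split $\sigma_k(\lambda)$ according to whether the chosen index set contains the first index:
\[
\sigma_k(\lambda)=\alpha\,\sigma_{k-1}(\beta,\dots,\beta)+\sigma_k(\beta,\dots,\beta)=C_{n-1}^{k-1}\alpha\beta^{k-1}+C_{n-1}^k\beta^k,
\]
where both $\sigma$'s on the right are taken over $n-1$ copies of $\beta$. This is \eqref{2.2} for $r>0$; note $C_{n-1}^{k}=0$ when $k=n$, so the formula is consistent in that case. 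At $r=0$ all $n$ eigenvalues are equal, so $\sigma_k=C_n^k\bigl((n-1)\varphi''(0)\bigr)^k$.
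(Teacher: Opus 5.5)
Your proposal is correct and is the same standard computation the paper relies on; the paper gives no proof and only refers to Lemma 2.1 of the authors' earlier work. That computation is the same as yours: the chain-rule Hessian $D^2u=\frac{\varphi'}{r}(I-\nu\otimes\nu)+\varphi''\,\nu\otimes\nu$, a shift by $\Delta u$, and splitting $\sigma_k$ according to whether the distinguished index is chosen. The one place to state more precisely is the origin: $Du(x)=\frac{\varphi'(r)}{r}\,x$ gives $Du(x)-\varphi''(0)x=o(|x|)$, hence $D^2u(0)=\varphi''(0)I$, and continuity of $D^2u$ at $0$ follows from $\varphi''-\varphi'/r\to0$ together with $|x_ix_j|/r^2\le1$.
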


Taking the $k$th root of both sides of \eqref{2.2}, we find that the radial function $u(x)=\varphi(r)$ satisfies the following second-order ordinary differential equation:
\begin{equation}\label{sODE}
\left[\varphi^{\prime\prime}(r)+c_1\frac{\varphi^{\prime}(r)}r\right]^{\frac{k-1}{k}}
\left[c_3\varphi^{\prime\prime}(r)+c_2\frac{\varphi^{\prime}(r)}r\right]^{\frac1k}
=
C_0\left(f(\varphi(r))+g(\varphi(r))|\varphi^{\prime}(r)|^q\right),
\quad r>0,
\end{equation}
where
\begin{equation}\label{constants}
c_1=n-2,\quad c_2=n^2-2n+k,\quad c_3=n-k,\quad C_0=\left(\frac n{C_n^k}\right)^{\frac1k}.
\end{equation}
Since $1\leq k<n$, we have $c_2>c_1\geq0$, $c_3>0$, and $C_0>0$.
For $k\geq2$, equation \eqref{sODE} is fully nonlinear and implicit in $\varphi^{\prime\prime}$ and $\varphi^\prime/r$. When $k=1$, the first factor on the left-hand side is omitted, and the equation is linear in $\varphi^{\prime\prime}$.

For each $a>0$ and $R>0$, we consider the following nonlinear Cauchy problem associated with \eqref{sODE}:
\begin{equation}\label{CP}
\left\{
\begin{aligned}
&\left[\varphi^{\prime\prime}(r)+c_1\frac{\varphi^\prime(r)}{r}\right]^{\frac{k-1}{k}}\left[c_3\varphi^{\prime\prime}(r)+c_2\frac{\varphi^\prime(r)}{r}\right]^{\frac1k}=C_0\left[f(\varphi(r))+g(\varphi(r))|\varphi^\prime(r)|^q\right],\quad r\in(0,R],\\
&\varphi(0)=a,\quad \varphi^\prime(0)=0.
\end{aligned}
\right.
\end{equation}

The following lemma establishes the regularity of radial solutions at the origin. In the settings considered in \cite{JB2010,BF2022,JTX2026,JJL2026}, integral representations for $\varphi^\prime$ provide a direct approach to the analysis of origin regularity. For $2\leq k<n$, our analysis is based on the implicit radial equation \eqref{sODE}, rather than an explicit integral representation for $\varphi^\prime$.
We employ a scaling argument to determine the asymptotic behavior of $\varphi^{\prime\prime}(r)$ and $\varphi^\prime(r)/r$ as $r\to0^+$.

\begin{lemma}\label{Lem2.2}
Assume that $f,g\in C(\mathbb R)$ and $q\in\mathbb R$. Let $R>0$ and let $\varphi\in C^1[0,R]\cap C^2(0,R]$ be a solution of \eqref{CP}.
Suppose that there exists $R_0\in (0,R)$ such that
\begin{equation}\label{elliptic}
\varphi^\prime(r)>0,\quad\varphi^{\prime\prime}(r)+c_1\frac{\varphi^\prime(r)}r>0,\quad c_3\varphi^{\prime\prime}(r)+c_2\frac{\varphi^\prime(r)}r>0,\quad r\in(0,R_0].
\end{equation}
Then the following assertions hold.
\begin{enumerate}[label=(\roman*)]
\item Let $q>0$ and $f(a)>0$. Suppose that $\varphi^\prime(r)/r=O(1)$ as $r\to0^+$, then $\varphi\in C^2[0,R]$ and
$\varphi^{\prime\prime}(0)=\frac{f(a)}{(n-1)(C_n^k)^{\frac1k}}$.

\item Let $q=0$ and $f(a)+g(a)>0$. Suppose that $\varphi^\prime(r)/r=O(1)$ as $r\to0^+$, then $\varphi\in C^2[0,R]$ and
$\varphi^{\prime\prime}(0)=\frac{f(a)+g(a)}{(n-1)(C_n^k)^{\frac1k}}$.

\item Let $0<q<1$, $f(a)=0$. Suppose that $\varphi^{\prime\prime}(r)\geq0$ for $r\in(0,R_0]$, then $\varphi\in C^2[0,R]$ and
$\varphi^{\prime\prime}(0)=0$.

\item Let $q<0$. Suppose that $\varphi^{\prime\prime}(r)\geq0$ for $r\in(0,R_0]$, then $\varphi\in W_{loc}^{2,\delta}[0,R]$ for $\delta\in[1,1-\frac1q)$.
\end{enumerate}
\end{lemma}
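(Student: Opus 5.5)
The plan is to treat \eqref{CP} near the origin as an algebraic relation between the two quantities
$$
A(r):=\varphi''(r),\qquad B(r):=\frac{\varphi'(r)}r ,
$$
and to use the elementary facts about the left-hand side
$$
G(A,B):=(A+c_1B)^{\frac{k-1}k}(c_3A+c_2B)^{\frac1k}.
$$
On the region where both factors are positive, which is guaranteed by \eqref{elliptic}, $G$ is positively homogeneous of degree one. It is also strictly increasing in each variable. On the diagonal,
$$
G(L,L)=(n-1)^{\frac{k-1}k}\bigl(n(n-1)\bigr)^{\frac1k}L,
$$
so $G(L,L)=C_0\mu$ exactly when $L=\mu/\bigl((n-1)(C_n^k)^{\frac1k}\bigr)$, which is the value claimed for $\varphi''(0)$. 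Write $F(r):=f(\varphi(r))+g(\varphi(r))|\varphi'(r)|^q$. Since $\varphi'(r)\to0$ and $\varphi(r)\to a$ as $r\to0^+$, we have $F(r)\to f(a)$ when $q>0$ and $F(r)\to f(a)+g(a)$ when $q=0$. Finally, $rB'=A-B$, so $B$ increases exactly where $A>B$.

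For (i) and (ii), let $\mu$ denote the limit of $F$ and $L$ the corresponding diagonal value; note $L>0$. By monotonicity of $G$, the equation defines $A$ as a continuous function $A=h(B,F)$ that is decreasing in $B$ and increasing in $F$. I first show $B(r)\to L$ as $r\to0^+$.

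Suppose $\limsup_{r\to0^+}B>L$. Pick $\varepsilon>0$ and $r_1$ small enough that $F<\mu+\eta$ on $(0,r_1)$, with $\eta$ chosen so that $h(L+\varepsilon,\mu+\eta)<L+\varepsilon-\delta$ for some $\delta>0$. On the set $\{B\geq L+\varepsilon\}$ we then have $A-B\le-\delta$, hence $B'\le-\delta/r$. Consequently $B$ cannot re-enter this set when moving from the region $\{B<L+\varepsilon\}$ toward the origin: at a crossing point, $B$ would have to be increasing as $r$ decreases, which is incompatible with $B'<0$. So once $B\ge L+\varepsilon$ at some small $r$, it stays there all the way to $0$. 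Integrating $B'\le-\delta/r$ then gives $B(r)\ge\delta\log(r_2/r)\to+\infty$, contradicting $B=O(1)$.

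The symmetric argument with $\liminf B<L$ gives $A-B\ge\delta$, hence $B'\ge\delta/r$, and $B$ would tend to $-\infty$, which is impossible. So $B\to L$. Then $A=h(B,F)\to h(L,\mu)=L$. Since $\varphi'(0)=0$, we get $\varphi''(0)=\lim\varphi'(r)/r=L$, and $A\to L$ gives continuity of $\varphi''$ at $0$, so $\varphi\in C^2[0,R]$.

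For (iii), the hypothesis $\varphi''\ge0$ gives $A\ge B\ge0$ by the mean value theorem. Hence
$$
G(A,B)\ge G(A,0)=c_3^{1/k}A ,
$$
so $0\le A\le C F(r)$. Since $f(a)=0$, $q>0$ and $\varphi'\to0$, we have $F(r)\to0$. Therefore $A\to0$, and then $B\to0$ as well, which gives $\varphi''(0)=0$ and $\varphi\in C^2[0,R]$. Here the restriction $q<1$ is not actually needed; the argument only uses $q>0$.

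For (iv), I would again use $A\ge B\ge0$ to get the upper bound
$$
A\le C\bigl(|f(\varphi)|+g^+(\varphi)\varphi'^{\,q}\bigr),
$$
and the lower bound $G(A,B)\le G(A,A)$, which gives $A\ge cF$. The first term is bounded. For the singular term I need a lower bound on $\varphi'$ wherever $g(\varphi)>0$. There, $\varphi''\ge c\,g\varphi'^{\,q}-C$, provided $f$ is bounded below near $a$. Near $0$ the singular term dominates, and integrating gives $\varphi'(r)^{1-q}\ge cr$, that is, $\varphi'\ge cr^{1/(1-q)}$. Then
$$
\varphi''^{\,\delta}\lesssim 1+r^{q\delta/(1-q)},
$$
which is integrable near $0$ exactly when $q\delta/(1-q)>-1$, i.e.\ $\delta<1-\frac1q$. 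This is the main obstacle: making the lower bound on $\varphi'$ uniform when $g(a)\le0$ or when $g(\varphi)$ changes sign near $a$. In that case I would split $(0,R_0]$ according to the sign of $g(\varphi(r))$; on the part where $g\le0$, $A$ is bounded directly. Away from the origin, $\varphi\in C^2$ on compact subsets of $(0,R]$, which completes $\varphi\in W^{2,\delta}_{loc}[0,R]$.
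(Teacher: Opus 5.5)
Your arguments for (i)--(iii) are fine and match the paper. The trapping argument for $B=\varphi'/r$ via $rB'=A-B$ is the paper's phase-line argument for $\dot w=G(w,\eta(t))$, written in $r$ instead of $t=\log r$. Part (iii) is the same squeeze $0\le\varphi''\le C_0c_3^{-1/k}F\to0$, and you are right that only $q>0$ is used there.

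One claim is false and should be removed: $\varphi''\ge0$ does not give $A\ge B$. The mean value theorem gives $B(r)=A(\xi_r)$ at some $\xi_r\in(0,r)$. To get $A(r)\ge B(r)$ you would need $\varphi''$ to be non-decreasing. In the setting of (iv) with $g(a)>0$, Remark~\ref{Rem2.2-iv}~(ii) even gives $A/B\to\frac1{1-q}<1$, so $A<B$ near the origin. In (iii) this does no harm, since $G(A,B)\ge c_3^{1/k}A$ only needs $A,B\ge0$. In (iv), however, it invalidates your lower bound $A\ge cF$ obtained from $G(A,B)\le G(A,A)$. That bound would have to be replaced by the Young-inequality estimate $\varphi''+(n+k-2)\varphi'/r\ge\frac{kC_0}{n-1}F$, as in Lemma~\ref{Lemma4.1}.

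The genuine gap is in (iv). Your route needs a pointwise lower bound $\varphi'\ge cr^{1/(1-q)}$, and even after the repair above this is available only when $g\ge g_0>0$ near $a$, that is, when $g(a)>0$. The case $g(a)<0$ cannot occur: $g(\varphi)\varphi'^{\,q}\to-\infty$ would contradict the positivity of the left-hand side guaranteed by \eqref{elliptic}. So the unresolved case is $g(a)=0$. The lemma assumes only continuity of $f$ and $g$, so $g(\varphi(r))$ may be positive on a set accumulating at $0$ while tending to $0$. Then the integration that produced $\varphi'^{\,1-q}\ge cr$ has no positive constant to work with. Splitting by the sign of $g(\varphi)$ leaves the set $\{g(\varphi)>0\}$ exactly as hard as before.

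The missing idea is that no lower bound on $\varphi'$ is needed. From $\varphi''\ge0$, boundedness of $f,g$ near $a$, and $\varphi'\le1$ near $0$, you get $0\le\varphi''\le C(\varphi')^q$. Then
\begin{equation*}
\int_\varepsilon^{R_*}(\varphi'')^\delta\,dr\le C^{\delta-1}\int_\varepsilon^{R_*}(\varphi')^{q(\delta-1)}\varphi''\,dr\le\frac{C^{\delta-1}}{1+q(\delta-1)}\bigl(\varphi'(R_*)\bigr)^{1+q(\delta-1)},
\end{equation*}
because $(\varphi')^{q(\delta-1)}\varphi''$ is an exact derivative and $1+q(\delta-1)>0$ precisely when $\delta<1-\frac1q$. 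This is the paper's argument, and it treats every value of $g(a)$ at once.
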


\begin{proof}
We first prove (i)--(ii). Set
$$
h(r):=f(\varphi(r))+g(\varphi(r))(\varphi^\prime(r))^q.
$$
In each case, $\lim\limits_{r\to0^+}h(r)=h_0>0$, where $h_0=f(a)$ and $h_0=f(a)+g(a)$, respectively.

Let $w(r):=\frac{\varphi^\prime(r)}r$. Since $w(r)=O(1)$ and $w(r)>0$ near the origin, there exist $R_1\in(0,R_0]$ and $M>0$ such that
$$
0<w(r)\leq M,\quad h(r)>0,\quad r\in(0,R_1].
$$
Let $r=e^t$ and write $w(t):=w(e^t)$ for $t\in(-\infty,\log R_1]$. Using a dot to denote differentiation with respect to $t$, we have
$$
\dot w(t)=rw^\prime(r),\quad \varphi^{\prime\prime}(r)=w(t)+\dot w(t).
$$
Thus, \eqref{sODE} becomes
\begin{equation}\label{w_equation}
\left[(1+c_1)w(t)+\dot w(t)\right]^{\frac{k-1}{k}}\left[(c_3+c_2)w(t)+c_3\dot w(t)\right]^{\frac1k}=C_0h(e^t).
\end{equation}
For fixed $w\geq0$, define
$$
\Psi_w(y):=\left[(1+c_1)w+y\right]^{\frac{k-1}{k}}\left[(c_3+c_2)w+c_3y\right]^{\frac1k}
$$
on the interval where its factors are positive, with the first factor omitted when $k=1$.
The function $\Psi_w$ is strictly increasing in $y$. Hence, for every $w\geq0$ and $\eta>0$, the equation $\Psi_w(y)=\eta$ has a unique solution $y=G(w,\eta)$, and $G$ is continuous. Let $\eta(t):=C_0h(e^t)$ and $\eta_0:=C_0h_0$,
we may rewrite \eqref{w_equation} as
\begin{equation}\label{eq_G}
\dot w(t)=G(w(t),\eta(t)).
\end{equation}
Since $\eta(t)\to\eta_0$ as $t\to-\infty$, the function $G(\cdot,\eta(t))$ converges uniformly on $[0,M]$ to
$$
G_0(w):=G(w,\eta_0).
$$

Set
$K:=(1+c_1)^{\frac{k-1}{k}}(c_3+c_2)^{\frac1k}$ and $L:=\frac{\eta_0}{K}$.
Since $\Psi_w(0)=Kw$ and $\Psi_w$ is strictly increasing, we have
$$
G_0(w)>0\quad\text{for }0\leq w<L,\quad G_0(L)=0,\quad G_0(w)<0\quad\text{for }w>L.
$$
Thus, $L$ is the unique nonnegative equilibrium of the limiting equation $\dot w=G(w,\eta_0)$.

We claim that
\begin{equation}\label{L}
\lim\limits_{t\to-\infty}w(t)=L.
\end{equation}
First, $L\leq M$. Indeed, if $L>M$, then $G_0>0$ on $[0,M]$. The uniform convergence gives $T<\log R_1$ and $c>0$ such that $\dot w(t)\geq c$ for $t\leq T$. Hence,
$$
w(t)=w(T)-\int_t^T\dot w(s)ds\leq w(T)-c(T-t)<0
$$
for all sufficiently negative $t$, contradicting $w(t)>0$.
Fix $\varepsilon\in(0,L)$. By the uniform convergence of $G(\cdot,\eta(t))$ to $G_0$, there exist $T_\varepsilon<\log R_1$ and $c_\varepsilon>0$ such that
$$
G(w,\eta(t))\geq c_\varepsilon\quad\text{for }0\leq w\leq L-\varepsilon,\quad G(w,\eta(t))\leq-c_\varepsilon\quad\text{for }L+\varepsilon\leq w\leq M,
$$
whenever $t\leq T_\varepsilon$. Suppose that $w(t_0)\leq L-\varepsilon$ for some $t_0\leq T_\varepsilon$, the sign of the vector field at $L-\varepsilon$ implies that $w(t)\leq L-\varepsilon$ for $t\leq t_0$. Therefore,
$$
w(t)=w(t_0)-\int_t^{t_0}\dot w(s)ds\leq w(t_0)-c_\varepsilon(t_0-t),\quad t<t_0,
$$
which contradicts $w(t)>0$ for all sufficiently negative $t$. Similarly, if $w(t_0)\geq L+\varepsilon$, then $w(t)\geq L+\varepsilon$ for $t\leq t_0$ and
$$
w(t)=w(t_0)-\int_t^{t_0}\dot w(s)ds\geq w(t_0)+c_\varepsilon(t_0-t),\quad t<t_0,
$$
which contradicts $w(t)\leq M$. Consequently,
$$
L-\varepsilon<w(t)<L+\varepsilon
$$
for all sufficiently negative $t$. Since $\varepsilon>0$ is arbitrary, \eqref{L} follows.

It follows from \eqref{eq_G}, \eqref{L}, and the convergence of $\eta(t)$ that
$$
\lim\limits_{t\to-\infty}\dot w(t)=G(L,\eta_0)=0.
$$
Returning to $r=e^t$, we obtain
$$
\lim\limits_{r\to0^+}\frac{\varphi^\prime(r)}r=L,\quad \lim\limits_{r\to0^+}\varphi^{\prime\prime}(r)=L.
$$
Moreover, since $\varphi^\prime(0)=0$,
$$
\varphi^{\prime\prime}(0):=\lim\limits_{r\to0^+}\frac{\varphi^\prime(r)-\varphi^\prime(0)}r=L.
$$
Therefore, $\varphi^{\prime\prime}$ extends continuously to the origin and $\varphi\in C^2[0,R]$. Finally, \eqref{constants} gives
$$
K=(n-1)n^{\frac1k},\quad L=\frac{C_0h_0}{K}=\frac{h_0}{(n-1)(C_n^k)^{\frac1k}}.
$$
Substituting the two corresponding values of $h_0$ proves (i)--(ii).

We next prove (iii). Since $\varphi\in C^1[0,R]$, $\varphi^\prime(0)=0$, and $q>0$, we have $\lim\limits_{r\to0^+}\varphi(r)=a$ and $\lim\limits_{r\to0^+}\varphi^\prime(r)=0$.
Since $f(a)=0$ and $g$ is continuous, it follows that
$$
\lim\limits_{r\to0^+}h(r)=\lim\limits_{r\to0^+}\left[f(\varphi(r))+g(\varphi(r))(\varphi^\prime(r))^q\right]=0.
$$
Moreover, since $\varphi^{\prime\prime}\geq0$ and $\varphi^\prime>0$ on $(0,R_0]$, we have
$$
\left(\varphi^{\prime\prime}+c_1\frac{\varphi^\prime}{r}\right)^{\frac{k-1}{k}}
\left(c_3\varphi^{\prime\prime}+c_2\frac{\varphi^\prime}{r}\right)^{\frac1k}\geq c_3^{\frac1k}\varphi^{\prime\prime}\quad \text{on }(0,R_0].
$$
It follows from \eqref{sODE} that
$$
0\leq\varphi^{\prime\prime}(r)\leq C_0c_3^{-\frac1k}h(r)\to0,\quad \text{as }r\to0^+.
$$
Therefore,
$$
\lim\limits_{r\to0^+}\varphi^{\prime\prime}(r)=0.
$$
For any sufficiently small $r>0$, the mean value theorem gives some $\xi_r\in(0,r)$ such that
$$
\lim\limits_{r\to0^+}\frac{\varphi^\prime(r)-\varphi^\prime(0)}r
=\lim\limits_{r\to0^+}\varphi^{\prime\prime}(\xi_r)=0.
$$
Thus, $\varphi^\prime$ is differentiable at the origin and $\varphi^{\prime\prime}(0)=0$.
Combining this with $\lim\limits_{r\to0^+}\varphi^{\prime\prime}(r)=0$, we have $\varphi\in C^2[0,R]$, which proves (iii).

We next prove (iv). Since $\varphi^\prime(0)=0$ and $f$ and $g$ are continuous, there exist $R_*\in(0,R_0]$ and $M>0$ such that
$$
0<\varphi^\prime(r)\leq1,\quad |f(\varphi(r))|+|g(\varphi(r))|\leq M,\quad r\in(0,R_*].
$$
Since $q<0$, we have $(\varphi^\prime(r))^q\geq1$ on $(0,R_*]$. Using $\varphi^{\prime\prime}\geq0$ in \eqref{sODE}, we obtain
$$
0\leq\varphi^{\prime\prime}(r)\leq C_0c_3^{-\frac1k}\left[f(\varphi(r))+g(\varphi(r))(\varphi^\prime(r))^q\right]\leq C(\varphi^\prime(r))^q,\quad r\in(0,R_*],
$$
where $C:=C_0c_3^{-\frac1k}M$. Let $\delta\in[1,1-\frac1q)$. Then $1+q(\delta-1)>0$, and, for any $\varepsilon\in(0,R_*)$,
\begin{align*}
\int_\varepsilon^{R_*}|\varphi^{\prime\prime}(r)|^\delta dr
&\leq C^{\delta-1}\int_\varepsilon^{R_*}(\varphi^\prime(r))^{q(\delta-1)}\varphi^{\prime\prime}(r)dr\\
&\leq\frac{C^{\delta-1}}{1+q(\delta-1)}(\varphi^\prime(R_*))^{1+q(\delta-1)}.
\end{align*}
Letting $\varepsilon\to0^+$ and using $\varphi\in C^2(0,R]$ proves (iv).
\end{proof}

\begin{remark}\label{Rem2.2-iv}
The Sobolev regularity established in Lemma \ref{Lem2.2}~(iv) does not require any additional assumption on $g(a)$ or the existence of a finite limit of $g(\varphi(r))(\varphi^\prime(r))^q$ as $r\to0^+$. Under the hypotheses of that lemma, the following refinements hold.
\begin{enumerate}[label=(\roman*)]
\item Let $q<0$ and $g(a)=0$. Suppose that $\ell:=\lim\limits_{r\to0^+}g(\varphi(r))(\varphi^\prime(r))^q\in\mathbb R$, $f(a)+\ell>0$, and $\varphi^\prime(r)/r=O(1)$ as $r\to0^+$. Then $\varphi\in C^2[0,R]$ and $\varphi^{\prime\prime}(0)=\frac{f(a)+\ell}{(n-1)(C_n^k)^{\frac1k}}$.

\item Let $q<0$ and $g(a)>0$. Suppose that there exist $R_1\in(0,R_0]$ and $m,M>0$ such that $m\leq\varphi^\prime(r)/r^{\frac1{1-q}}\leq M$ for $r\in(0,R_1]$. Then
\begin{equation}\label{O}
\varphi^\prime(r)=Dr^{\frac1{1-q}}(1+o(1)),\quad \varphi^{\prime\prime}(r)=\frac{D}{1-q}r^{\frac q{1-q}}(1+o(1)),\quad\text{as }r\to0^+,
\end{equation}
where $D:=\left[C_0g(a)\left(\frac1{1-q}+c_1\right)^{-\frac{k-1}{k}}\left(\frac{c_3}{1-q}+c_2\right)^{-\frac1k}\right]^{\frac1{1-q}}$.
\end{enumerate}
Both assertions follow from scaling arguments analogous to those in the proof of Lemma \ref{Lem2.2}~(i)--(ii), applied to $\varphi^\prime(r)/r$ in (i) and $\varphi^\prime(r)/r^{\frac1{1-q}}$ in (ii).
\end{remark}
We next establish several properties of solutions to \eqref{sODE}. These properties will be used in the construction of entire subsolutions and in verifying the admissibility of the corresponding radial solutions.

\begin{lemma}\label{Lem2.3}
Assume that $f,g\in C(\mathbb R)$ and $q\in\mathbb R$. If $q>0$, assume that $f(t)>0$ for all $t>0$. If $q=0$, assume that $f(t)+g(t)>0$ for all $t>0$. Let $R\in(0,+\infty]$ and let $\varphi\in C^1[0,R)\cap C^2(0,R)$ solve \eqref{CP}. Suppose that there exists $r_0\in(0,R)$ such that
\begin{equation}\label{elliptic_2}
\varphi^\prime(r)>0,\quad \varphi^{\prime\prime}(r)\geq0,\quad r\in(0,r_0].
\end{equation}
Then the following assertions hold.
\begin{enumerate}[label=(\roman*)]
\item $\varphi^\prime(r)>0$ for all $r\in(0,R)$.

\item If $f$ and $g$ are non-decreasing, then $\varphi^{\prime\prime}(r)\geq0$ for all $r\in(0,R)$.

\item If $q>0$ and $f$ and $g$ are non-decreasing, then
\begin{equation}\label{iii}
\varphi^\prime(r)<\left(\frac{f(\varphi(r))}{g^-(\varphi(r))}\right)^{\frac1q},\quad r\in(0,R),
\end{equation}
where $g^-(t):=\max\{-g(t),0\}$ and the right-hand side is understood as $+\infty$ when $g^-(\varphi(r))=0$.
\end{enumerate}
\end{lemma}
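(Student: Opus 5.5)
Each assertion will be proved by a first-exit (continuity) argument on $(0,R)$, using the structure of \eqref{sODE} together with the initial information \eqref{elliptic_2}.

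\emph{Assertion (i).} Let $r_1:=\sup\{s\in(0,R):\varphi^\prime>0\text{ on }(0,s)\}\geq r_0$. Suppose $r_1<R$. Then $\varphi^\prime(r_1)=0$ and $\varphi^\prime>0$ on $(0,r_1)$.
\begin{itemize}
\item Where the solution is admissible, both bracketed factors in \eqref{CP} are nonnegative. Then $c_3\varphi^{\prime\prime}+c_2\varphi^\prime/r\geq0$ and $\varphi^{\prime\prime}+c_1\varphi^\prime/r\geq0$.
\item Evaluating at $r_1$ gives $\varphi^{\prime\prime}(r_1)\geq0$.
\item The right-hand side at $r_1$ is $C_0[f(\varphi(r_1))+g(\varphi(r_1))\cdot0^q]$, interpreted by continuity. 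Since $\varphi$ is increasing on $(0,r_1)$, we have $\varphi(r_1)>a>0$. For $q>0$ this equals $C_0f(\varphi(r_1))>0$; for $q=0$ it equals $C_0(f+g)(\varphi(r_1))>0$.
\item The left-hand side at $r_1$ is $c_3^{1/k}\varphi^{\prime\prime}(r_1)$, so $\varphi^{\prime\prime}(r_1)>0$.
\item But $\varphi^\prime>0$ before $r_1$ and $\varphi^\prime(r_1)=0$ force $\varphi^{\prime\prime}(r_1)\leq0$, a contradiction.
\end{itemize}
For $q<0$ the right-hand side blows up as $\varphi^\prime\to0$ if $g>0$. I will handle this by writing the equation as $\varphi^{\prime\prime}=G(\varphi^\prime/r,\eta)$, as in Lemma \ref{Lem2.2}. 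Since $G$ is increasing in $\eta$ and $G(0,\eta)>0$ for $\eta>0$, $\varphi^{\prime\prime}$ is bounded below by a positive quantity near $r_1$, again a contradiction. In the sign-indefinite case I will instead observe that the left factor $\varphi^{\prime\prime}+c_1\varphi^\prime/r$ must stay positive for the real root to be defined.

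\emph{Assertion (ii).} This is the main obstacle, since the equation is implicit. The plan is a contradiction argument on $r_2:=\sup\{s:\varphi^{\prime\prime}\geq0\text{ on }(0,s)\}$. If $r_2<R$, there are points $s\downarrow r_2$ with $\varphi^{\prime\prime}(s)<0$, and $\varphi^{\prime\prime}(r_2)=0$.

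Set $w=\varphi^\prime/r$. Then $w^\prime=(\varphi^{\prime\prime}-w)/r$, and I write the equation as $\varphi^{\prime\prime}=G(w,\eta)$ with $\eta=C_0h$. The facts to use are:
\begin{itemize}
\item $G$ is increasing in $\eta$ and decreasing in $w$: from $\Psi_w(y)$ increasing in both $w$ and $y$, holding $\Psi_w(y)=\eta$ fixed forces $y$ to decrease as $w$ increases.
\item Near $r_2$, $\varphi^{\prime\prime}<w$, so $w$ is strictly decreasing.
\item By (i), $\varphi$ is increasing; if in addition $\varphi^\prime$ is nonincreasing, $h=f(\varphi)+g(\varphi)(\varphi^\prime)^q$ is nondecreasing for $q\geq 0$, using the monotonicity of $f$ and $g$. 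For $q<0$ (so $g\geq0$ is needed) the term $(\varphi^\prime)^q$ increases as $\varphi^\prime$ decreases, which only helps. The general sign case needs care.
\end{itemize}
Hence on an interval just after $r_2$ where $\varphi^{\prime\prime}<0$, $G(w,\eta)$ is nondecreasing along the trajectory, so $\varphi^{\prime\prime}(s)\geq\varphi^{\prime\prime}(r_2)=0$, contradicting $\varphi^{\prime\prime}(s)<0$.

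To make this rigorous without differentiating $h$, I will compare values at $r_2$ and at $s$ directly through the monotonicity of $G$. The needed facts are $w(s)<w(r_2)$ and $\eta(s)\geq\eta(r_2)$; the latter follows from $\varphi(s)>\varphi(r_2)$ and $\varphi^\prime(s)\leq\varphi^\prime(r_2)$ handled by sign. When $g<0$ and $q>0$, decreasing $\varphi^\prime$ increases $g(\varphi)(\varphi^\prime)^q$ as well, so $\eta$ still does not decrease. Thus all sign cases work.

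\emph{Assertion (iii).} Positivity of the right-hand side along an admissible solution gives $f(\varphi)+g(\varphi)(\varphi^\prime)^q\geq0$, i.e. the non-strict inequality. For strictness, suppose equality holds at some $r_3$, so $\eta(r_3)=0$. Then $G(w(r_3),0)<0$, because $\Psi_w(0)=Kw>0$ forces the root $y$ to be negative. This contradicts (ii). Near the origin the inequality holds since $\varphi^\prime\to0$ while $f(a)>0$, and the contradiction argument covers every later point.
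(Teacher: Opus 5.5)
Your argument for (ii) has a genuine gap. The key step, that $\eta(s)\ge\eta(r_2)$ follows from $\varphi(s)>\varphi(r_2)$ and $\varphi'(s)\le\varphi'(r_2)$, does not hold in the central case $q>0$, $g(\varphi(r_2))>0$ (e.g.\ condition (A)). There $(\varphi'(s))^q<(\varphi'(r_2))^q$, so $g(\varphi)(\varphi')^q$ can decrease; take $f,g$ locally constant. Your bullet claiming that $h$ is nondecreasing for $q\ge0$ whenever $\varphi'$ is nonincreasing is therefore wrong. The same failure occurs for $q<0$ with $g(\varphi(r_2))<0$, which the lemma does not exclude. Once $\eta$ may drop, the decrease of $w$ (which raises $G$) and the decrease of $\eta$ (which lowers $G$) compete, and no sign for $\varphi''(s)$ follows. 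There is a second problem: $\varphi''$ may change sign infinitely often to the right of $r_2$. So there need not be an interval starting at $r_2$ on which $\varphi''<0$, and $\varphi'(s)\le\varphi'(r_2)$ is not guaranteed at the points where $\varphi''(s)<0$.

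The missing idea is to compare two points with \emph{equal gradient}, so that the factor $(\varphi')^q$ cancels whatever the signs of $g$ and $q$. The paper takes $\tau$ just to the right of $r_1=\inf\{r>r_0:\varphi''(r)<0\}$, with $\varphi''(\tau)<0$ and both factors of \eqref{sODE} still positive. It then sets $\sigma:=\min\{r\in(0,\tau):\varphi'(r)=\varphi'(\tau)\}$, which exists because $\varphi'(0)=0<\varphi'(\tau)$ and $\varphi'>\varphi'(\tau)$ just left of $\tau$. Then $\varphi''(\sigma)\ge0>\varphi''(\tau)$ and $\varphi'(\sigma)/\sigma>\varphi'(\tau)/\tau$, so the left-hand side of \eqref{sODE} is strictly smaller at $\tau$ than at $\sigma$. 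On the other hand, $\varphi(\tau)>\varphi(\sigma)$ and $\varphi'(\tau)=\varphi'(\sigma)$ give $h(\tau)\ge h(\sigma)$ from the monotonicity of $f$ and $g$ alone, a contradiction.

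Your local comparison could also be rescued quantitatively, though this is not what you wrote. Since $\varphi''(r_2)=0$, one has $|\varphi'(s)-\varphi'(r_2)|=o(s-r_2)$, hence $\eta(s)\ge\eta(r_2)-o(s-r_2)$. Meanwhile $w(s)\le w(r_2)-c(s-r_2)$ for some $c>0$, and $G$ is $C^1$ with $\partial_wG<0$ near $(w(r_2),\eta(r_2))$, which forces $\varphi''>0$ just to the right of $r_2$.

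The rest is essentially fine. Part (i) for $q\ge0$ matches the paper. For $q<0$ you cannot bound $\eta$ below near $r_1$, since there is no sign condition on $f,g$. You need not: $|\varphi'|^q$ must be defined at every point of $(0,R)$, so $\varphi'$ cannot vanish there. In (iii), the positivity you need is that of the left-hand side of \eqref{sODE} at $\varphi''=0$, namely $c_1^{\frac{k-1}{k}}c_2^{\frac1k}\varphi'/r>0$ (with $c_1>0$ when $k\ge2$). It is not $\Psi_w(0)=Kw$, which concerns $\dot w$ rather than $\varphi''$.
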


\begin{proof}
We first prove (i). Suppose that $q\geq0$ and, for contradiction, that $\rho\in(r_0,R)$ is the first point at which $\varphi^\prime(\rho)=0$. Then $\varphi^\prime(r)>0$ for $r\in(0,\rho)$, $\varphi^{\prime\prime}(\rho)\leq0$, and $\varphi(\rho)>a>0$. By the assumptions on $f$ and $g$, we have $h(\rho)=f(\varphi(\rho))>0$ if $q>0$, and $h(\rho)=f(\varphi(\rho))+g(\varphi(\rho))>0$ if $q=0$. By continuity, $h(r)>0$ for $r<\rho$ sufficiently close to $\rho$, and \eqref{sODE} therefore gives $c_3\varphi^{\prime\prime}(r)+c_2\varphi^\prime(r)/r>0$ on this interval.
Letting $r\to\rho^-$ yields $\varphi^{\prime\prime}(\rho)\geq0$, and hence $\varphi^{\prime\prime}(\rho)=0$. The left-hand side of \eqref{sODE} then vanishes at $\rho$, contradicting $C_0h(\rho)>0$.

If $q<0$, the well-definedness of $|\varphi^\prime(r)|^q$ requires $\varphi^\prime(r)\neq0$ for every $r\in(0,R)$. Since $\varphi^\prime$ is continuous and positive near the origin, it remains positive throughout $(0,R)$. This proves (i).

We next prove (ii). Suppose, for contradiction, that $\varphi^{\prime\prime}$ is negative somewhere, and set
$$
r_1:=\inf\{r\in(r_0,R):\varphi^{\prime\prime}(r)<0\}.
$$
By \eqref{elliptic_2} and continuity, $r_1\in[r_0,R)$, $\varphi^{\prime\prime}(r_1)=0$, and $\varphi^{\prime\prime}(r)\geq0$ for $r\in(0,r_1]$.
By (i), $\varphi^\prime(r_1)>0$. Since $c_2>0$ and $c_1>0$ when $k\geq2$, the factors occurring on the left-hand side of \eqref{sODE} are strictly positive at $r_1$. By continuity and the definition of $r_1$, we may choose $\tau\in(r_1,R)$ sufficiently close to $r_1$ such that $\varphi^{\prime\prime}(\tau)<0$ and these factors remain strictly positive at $\tau$.

Since $\varphi^{\prime\prime}(\tau)<0$, we have $\varphi^\prime(r)>\varphi^\prime(\tau)$ for $r<\tau$ sufficiently close to $\tau$. Together with $\varphi^\prime(0)=0<\varphi^\prime(\tau)$, this shows that
$$
\sigma:=\min\{r\in(0,\tau):\varphi^\prime(r)=\varphi^\prime(\tau)\}
$$
is well defined and satisfies $\varphi^{\prime\prime}(\sigma)\geq0$. Thus $0<\sigma<\tau$, $\varphi^\prime(\sigma)=\varphi^\prime(\tau)>0$, and
\begin{equation}\label{2.14}
\varphi^{\prime\prime}(\sigma)\geq0>\varphi^{\prime\prime}(\tau),\quad \frac{\varphi^\prime(\sigma)}{\sigma}>\frac{\varphi^\prime(\tau)}{\tau}.
\end{equation}
For $k\geq2$, define
$$
\mathcal F(y,z):=(y+c_1z)^{\frac{k-1}{k}}(c_3y+c_2z)^{\frac1k}
$$
on the region where both factors are positive; for $k=1$, set $\mathcal F(y,z):=c_3y+c_2z$. By the choice of $\tau$ and the properties of $\sigma$, both points under consideration belong to the domain of $\mathcal F$. Since $c_1\geq0$, $c_2>0$, and $c_3>0$, \eqref{2.14} gives
\begin{equation}\label{eq2.1}
\mathcal F\left(\varphi^{\prime\prime}(\tau),\frac{\varphi^\prime(\tau)}{\tau}\right)<\mathcal F\left(\varphi^{\prime\prime}(\sigma),\frac{\varphi^\prime(\sigma)}{\sigma}\right).
\end{equation}
On the other hand, (i) gives $\varphi(\tau)>\varphi(\sigma)$. Since $f$ and $g$ are non-decreasing and $\varphi^\prime(\sigma)=\varphi^\prime(\tau)$,
$$
f(\varphi(\tau))+g(\varphi(\tau))(\varphi^\prime(\tau))^q\geq f(\varphi(\sigma))+g(\varphi(\sigma))(\varphi^\prime(\sigma))^q.
$$
In view of \eqref{sODE}, this contradicts \eqref{eq2.1}. Therefore, $\varphi^{\prime\prime}(r)\geq0$ for all $r\in(0,R)$.

Finally, we prove (iii). By (i) and (ii), $\varphi^\prime(r)>0$ and $\varphi^{\prime\prime}(r)\geq0$ for all $r\in(0,R)$. Since $c_2,c_3>0$ and $c_1>0$ when $k\geq2$, the left-hand side of \eqref{sODE} is strictly positive. Hence
$$
f(\varphi(r))+g(\varphi(r))(\varphi^\prime(r))^q>0,\quad r\in(0,R).
$$
If $g(\varphi(r))<0$, this yields
$$
g^-(\varphi(r))(\varphi^\prime(r))^q<f(\varphi(r)),
$$
which proves \eqref{iii}. If $g(\varphi(r))\geq0$, the right-hand side of \eqref{iii} is $+\infty$, so the inequality follows from the stated convention. This completes the proof.
\end{proof}
\begin{remark}
In the degenerate case $0<q<1$ and $f(a)=0$, conclusions (i) and (ii) remain valid if $f$ and $g$ are non-decreasing and \eqref{elliptic_2} holds. Indeed, the proof of (ii), applied on the maximal interval where $\varphi^\prime>0$, gives $\varphi^{\prime\prime}\geq0$ without using strict positivity of $f$. Consequently, $\varphi^\prime$ cannot vanish at a finite endpoint of this interval. Thus every locally increasing convex branch remains increasing and convex throughout its interval of existence. In particular, this applies to the degenerate branches constructed in Section \ref{Section3}, including the pure-gradient case $f\equiv0$.
\end{remark}

\begin{remark}\label{Rem2.2}
Let $R>0$ and let $\varphi\in C^1[0,R]\cap C^2(0,R]$ solve \eqref{CP} and satisfy the hypotheses of Lemma \ref{Lem2.3}. Set $u(x):=\varphi(|x|)$ in $B_R$. Applying that lemma on $(0,R]$ gives $\varphi^\prime>0$ and $\varphi^{\prime\prime}\geq0$. If $n\geq3$, all eigenvalues in \eqref{2.1} are positive for $r>0$. If $n=2$, then $k=1$ and
$\sigma_1(\lambda(\Delta uI-D^2u))=\frac{\varphi^\prime(r)}r+\varphi^{\prime\prime}(r)>0$ for $r>0$. Thus
\begin{equation*}
\lambda(\Delta uI-D^2u)\in\Gamma_k,\quad x\in B_R\setminus\{0\}.
\end{equation*}
If Lemma \ref{Lem2.2}~(i), (ii), or (iii) applies, then $u\in C^2(B_R)$ and is admissible at the origin as well.

For $q<0$, Lemma \ref{Lem2.2}~(iv) gives the one-dimensional Sobolev estimate. Since $\varphi^\prime(0)=0$, the identity $\varphi^\prime(r)=\int_0^r\varphi^{\prime\prime}(s)ds$ and Jensen's inequality give
$$
\int_0^R\left|\frac{\varphi^\prime(r)}r\right|^\delta r^{n-1}dr\leq\frac{R^{n-1}}{n-1}\int_0^R|\varphi^{\prime\prime}(r)|^\delta dr.
$$
The radial Hessian formula therefore yields
$$
u\in C^1(B_R)\cap C^2(B_R\setminus\{0\})\cap W_{loc}^{2,\delta}(B_R),\quad 1\leq\delta<1-\frac1q.
$$
If, in addition, the hypotheses of Remark \ref{Rem2.2-iv}~(ii) hold, then \eqref{O} gives $|D^2u(x)|\leq C|x|^{\frac q{1-q}}$ near the origin and improves the multidimensional range to $1\leq\delta<n(1-\frac1q)$.

The preceding admissibility conclusion does not require $g\geq0$. In particular, under condition (B), the right-hand side of \eqref{sODE} remains strictly positive along the increasing convex solutions.
\end{remark}

\begin{remark}\label{Rem_condition}
The local radial solutions constructed in Section \ref{Section3} satisfy $\varphi^\prime(r)>0$ and $\varphi^{\prime\prime}(r)\geq0$ for all sufficiently small $r>0$. Consequently, the conditions \eqref{elliptic_2} and \eqref{elliptic} are automatically satisfied and do not need to be imposed as extra assumptions on these solutions.
\end{remark}

The weighted integral comparisons in \cite[Lemma 2.4]{JJL2026} apply with $F=f$ and $\theta=\frac12$. Under condition (B), the function $P$ defined in \eqref{P} is nonnegative and non-increasing for $0<q\leq2$. Set
\begin{equation}\label{weighted_J_k}
J_{\lambda,b}^-(t):=\int_b^t f(s)e^{-\lambda\int_s^tP(\tau)d\tau}ds,\quad \lambda>0,\quad b\geq0,\quad t>b.
\end{equation}
Part (i) of \cite[Lemma 2.4]{JJL2026} yields
$$
J_{\lambda,b}^-(t)\leq J_{\mu,b}^-(t)\leq\frac{\lambda}{\mu}J_{\lambda,b}^-(t),\quad t>b,\quad 0<\mu<\lambda.
$$
Consequently, the convergence or divergence of
$$
\int_A^{+\infty}\frac{dt}{\sqrt{J_{\lambda,b}^-(t)}},\quad A>b,
$$
is independent of the choice of $\lambda>0$.
For positively weighted integrals, part (ii) of \cite[Lemma 2.4]{JJL2026} applies with $G=g$ under condition (A) and $G=g^+$ under condition (B), provided that $f(a)>0$. These comparisons establish the parameter independence of $\Psi_k^{(\lambda)}$ and $\widetilde\Psi_k^{(\lambda)}$ used below.

\section{Local Existence for the Radial Cauchy Problem}\label{Section3}

We establish local solvability of the Cauchy Problem \eqref{CP} under conditions (A) and (B). In the fully nonlinear case $k\geq2$, the radial equation in \eqref{CP} is nonlinear and implicit in $\varphi^{\prime\prime}$ and does not admit an explicit integral representation.
Moreover, the vector field of the associated first-order system need not be locally Lipschitz at $\varphi^\prime=0$ when $0<q<1$ and is singular there when $q<0$. We address these difficulties by constructing suitable sub- and supersolutions and applying an existence theorem for cooperative systems.

We first recall a local existence and regularity result for a class of Cauchy problems for first-order ordinary differential equations established by Euler's method.

\begin{lemma}[{\cite[Theorem 1.1]{JTX2026}}]\label{Lem3.1}
Assume that $g:\mathbb{R}\to(0,+\infty)$ is a continuous non-decreasing function. Let $q\geq0$, $\tau>0$, $\theta>0$, and $C>0$ be constants satisfying $\tau>q\theta$. For any initial value $ a \in \mathbb{R} $, there exists $ R > 0 $ such that the Cauchy problem
\begin{equation*}
\left\{
\begin{aligned}
\varphi^\prime(r)&=Cr^{-q}\left(\int_0^r s^{\tau-1}g(\varphi(s))ds\right)^{\frac{1}{\theta}}, \quad r\in(0,R),\\
\varphi(0)&=a,
\end{aligned}
\right.
\end{equation*}
admits a solution $\varphi\in C^1[0,R)\cap C^2(0,R)$ satisfying $\varphi^\prime(0)=0$, $\varphi^\prime(r)>0$, and $\varphi^{\prime\prime}(r)>0$ for all $r\in(0,R)$. Moreover, the following properties hold:
\begin{enumerate}
\item If $ \tau > \theta(q+1) $, then $ \varphi \in C^2[0, R) $ with $ \varphi^{\prime\prime}(0) = 0 $.
\item If $ \tau = \theta(q+1) $, then $ \varphi \in C^2[0, R) $ with $ \varphi^{\prime\prime}(0) = C \left( \frac{g(a)}{\tau} \right)^{\frac{1}{\theta}} $.
\item If $ \tau < \theta(q+1) $, then $ \varphi \in C^2(0, R) \cap W^{2,\delta}(0, R) $ for any $ \delta \in \left( 1, \frac{\theta}{\theta(q+1)-\tau} \right) $.
\end{enumerate}
\end{lemma}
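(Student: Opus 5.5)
Since $g$ is only continuous (not Lipschitz), I would avoid uniqueness-based arguments and obtain a solution by monotone iteration; the paper's source uses Euler's method, which is an equivalent compactness route. Integrating, the problem is equivalent to the fixed-point equation $\varphi=T\varphi$, where
$$
(T\varphi)(r):=a+\int_0^r Cs^{-q}\Bigl(\int_0^s t^{\tau-1}g(\varphi(t))dt\Bigr)^{\frac1\theta}ds .
$$
Because $g$ is positive and non-decreasing, $T$ is order preserving on continuous functions. For $\varphi\geq a$ and $s$ small, the inner integral is comparable to $s^\tau$, so the integrand behaves like $s^{\tau/\theta-q}$. The assumption $\tau>q\theta$ makes this exponent positive, hence the integrand is integrable and $T\varphi$ is well defined.

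\textbf{Step 1 (ordered sub- and supersolution).} Take $\underline\varphi\equiv a$ as subsolution and $\overline\varphi\equiv a+1$ as supersolution. Choose $R>0$ so small that
$$
CR^{\tau/\theta-q+1}\bigl(g(a+1)/\tau\bigr)^{1/\theta}\big/(\tau/\theta-q+1)\leq1 .
$$
Then $\underline\varphi\leq T\underline\varphi$ and $T\overline\varphi\leq\overline\varphi$ on $[0,R)$. Set $\varphi_0=a$ and $\varphi_{j+1}=T\varphi_j$. By monotonicity of $T$, this sequence is non-decreasing and bounded by $a+1$. The bound $\varphi_j'(r)\leq C r^{\tau/\theta-q}(g(a+1)/\tau)^{1/\theta}$ makes the sequence equicontinuous. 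By monotone convergence, using continuity of $g$ in the inner integral, the limit $\varphi$ satisfies $\varphi=T\varphi$, so $\varphi\in C^1[0,R)$ and $\varphi'(0)=0$.

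\textbf{Step 2 (sign of $\varphi''$).} Write $I(r)=\int_0^r t^{\tau-1}g(\varphi(t))dt$. Since $g(a)>0$, we have $I>0$, so $\varphi'>0$ and $\varphi$ is increasing. Differentiating $\varphi'=Cr^{-q}I^{1/\theta}$ on $(0,R)$ gives
$$
\varphi''=Cr^{-q-1}I^{\frac1\theta}\Bigl(-q+\frac{r^\tau g(\varphi(r))}{\theta I(r)}\Bigr),
$$
so $\varphi\in C^2(0,R)$. Monotonicity of $g\circ\varphi$ gives $I(r)\leq g(\varphi(r))r^\tau/\tau$. Hence the bracket is at least $\tau/\theta-q>0$, which yields $\varphi''>0$.

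\textbf{Step 3 (behaviour at the origin).} Continuity of $g\circ\varphi$ at $0$ gives $I(r)=\frac{g(a)}{\tau}r^\tau(1+o(1))$ and $r^\tau g(\varphi(r))/(\theta I)\to\tau/\theta$. Therefore
$$
\varphi''(r)=C\Bigl(\frac{g(a)}\tau\Bigr)^{\frac1\theta}\Bigl(\frac\tau\theta-q\Bigr)r^{\frac\tau\theta-q-1}(1+o(1)).
$$
The three cases then follow directly:
\begin{enumerate}
\item If $\tau>\theta(q+1)$, the exponent is positive, so $\varphi''\to0$. Combined with the mean value theorem as in Lemma \ref{Lem2.2}~(iii), this gives $\varphi''(0)=0$ and $\varphi\in C^2[0,R)$.
\item If $\tau=\theta(q+1)$, then $\tau/\theta-q=1$ and $\varphi''\to C(g(a)/\tau)^{1/\theta}$, which is the stated value of $\varphi''(0)$.
\item If $\tau<\theta(q+1)$, then $|\varphi''|^\delta\sim r^{-\delta(q+1-\tau/\theta)}$ is integrable near $0$ exactly when $\delta<\theta/(\theta(q+1)-\tau)$.
\end{enumerate}

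I expect the main obstacle to be Step 1, specifically passing to the limit in the nested nonlinear integral and checking that the limit keeps $\varphi'(0)=0$ without any Lipschitz property of $g$. Monotonicity of $T$ together with the uniform majorant $r^{\tau/\theta-q}$ should handle this via dominated convergence.
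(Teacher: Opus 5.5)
Your argument is correct, but it follows a different route from the paper. The paper proves nothing here: it imports the lemma from \cite[Theorem 1.1]{JTX2026}, where existence comes from Euler's method. That means polygonal approximate solutions of the integral equation, then a limit extracted by Arzel\`a--Ascoli compactness, in the spirit of Peano's theorem.

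You instead use that $T$ is order preserving, because $g$ is non-decreasing. You iterate upward from the constant subsolution $a$ beneath the constant supersolution $a+1$. The limit then follows from monotone and dominated convergence alone, so your equicontinuity remark is not needed. As a by-product, your iterate is the minimal solution on the interval: every solution $\psi$ satisfies $\psi\geq a$, hence $\varphi_j\leq\psi$ by induction.

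What each route buys:
\begin{itemize}
\item Euler's method needs only continuity of the right-hand side for existence, so it is more robust.
\item Your iteration uses monotonicity of $g$ already at the existence stage. Nothing is lost, since that monotonicity is needed anyway for $\varphi''>0$ in your Step 2.
\item Your version is self-contained and elementary. It also derives all three regimes at the origin from the single asymptotic formula for $\varphi''$, rather than quoting them.
\end{itemize}

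Two small points to tidy up.
\begin{itemize}
\item Your choice of $R$ makes the supersolution inequality hold at $r=R$ as well, so work on $[0,R]$. Then $\varphi''$ is continuous, hence bounded, near $R$. This is needed for $\varphi''\in L^\delta(0,R)$ in case (3), not only integrability near $0$.
\item In case (2), identify $\varphi''(0)$ explicitly, as you did in case (1). Either use the mean value theorem, or note directly that $\varphi'(r)/r=Cr^{-q-1}I(r)^{1/\theta}\to C\left(g(a)/\tau\right)^{1/\theta}$.
\end{itemize}
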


We next recall the notion of a cooperative system; see, for example, \cite{Smith1995,Walter1998}.

\begin{definition}[Cooperative system]\label{Def_Cooperative}
A system of first-order ordinary differential equations
\begin{equation*}
\mathbf{u}^\prime(t)=\mathbf{F}(t,\mathbf{u}(t)),
\end{equation*}
where $\mathbf{F}=(F_1,F_2)^T:[0,T]\times\mathbb{R}^2\to\mathbb{R}^2$, is said to be cooperative (or quasimonotone non-decreasing), if $F_1(t,u_1,u_2)$ is non-decreasing in $u_2$ and $F_2(t,u_1,u_2)$ is non-decreasing in $u_1$.
\end{definition}

The following sub- and supersolution result for continuous cooperative systems follows by truncation and Schauder's fixed-point theorem. We include the argument to specify the continuity assumptions; see also \cite{Walter1998} and the scalar result \cite[Theorem 1.2.3]{GK1988}.

\begin{lemma}\label{Lem3.2}
Let $T>0$, and suppose that $\mathbf{F}:[0,T]\times\mathbb{R}^2\to\mathbb{R}^2$ is continuous and cooperative. Let $\mathbf u_0\in\mathbb R^2$. Let $\underline{\mathbf{u}}=(\underline{u}_1,\underline{u}_2)^T$ and $\bar{\mathbf{u}}=(\bar{u}_1,\bar{u}_2)^T$ belong to $C^1([0,T];\mathbb{R}^2)$ and be a subsolution and a supersolution, respectively, such that $\underline{\mathbf{u}}(t) \leq \bar{\mathbf{u}}(t)$ for all $t \in [0, T]$, and
\begin{align*}
\underline{\mathbf{u}}^\prime(t)\leq\mathbf{F}(t,\underline{\mathbf{u}}(t)), \quad \bar{\mathbf{u}}^\prime(t)\geq\mathbf{F}(t,\bar{\mathbf{u}}(t))
\end{align*}
with initial conditions $\underline{\mathbf{u}}(0) \leq \mathbf{u}_0 \leq \bar{\mathbf{u}}(0)$.
Then the initial value problem
\begin{equation}\label{osystem}
\left\{
\begin{aligned}
\mathbf{u}^\prime(t)&=\mathbf{F}(t,\mathbf{u}(t)), \quad t\in[0,T],\\
\mathbf{u}(0)&=\mathbf{u}_0,
\end{aligned}
\right.
\end{equation}
admits at least one solution $\mathbf{u}\in C^1([0,T];\mathbb{R}^2)$ satisfying
\begin{equation*}
\underline{\mathbf{u}}(t)\leq\mathbf{u}(t)\leq\overline{\mathbf{u}}(t), \quad t\in[0,T].
\end{equation*}
\end{lemma}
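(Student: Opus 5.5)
The plan is the standard truncation plus Schauder fixed-point argument. First, I will reduce to a bounded, continuous and cooperative vector field that agrees with $\mathbf F$ between the barriers. For $t\in[0,T]$ and $\mathbf u\in\mathbb R^2$, set
$$
p_i(t,u_i):=\max\{\underline u_i(t),\min\{u_i,\bar u_i(t)\}\},\quad i=1,2,
$$
and write $p(t,\mathbf u):=(p_1(t,u_1),p_2(t,u_2))$. Then $p$ is continuous and componentwise non-decreasing in $\mathbf u$. Define
$$
\widetilde F_i(t,\mathbf u):=F_i(t,p(t,\mathbf u))-\frac{u_i-p_i(t,u_i)}{1+|u_i-p_i(t,u_i)|}.
$$
The field $\widetilde{\mathbf F}$ is continuous and bounded on $[0,T]\times\mathbb R^2$, because $p$ ranges in the compact set between the barriers and the correction term is bounded by $1$. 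The map $\mathbf v\mapsto \mathbf u_0+\int_0^t\widetilde{\mathbf F}(s,\mathbf v(s))ds$ is continuous on $C([0,T];\mathbb R^2)$. It sends this space into a bounded, equicontinuous set, which is relatively compact by Arzel\`a--Ascoli. Schauder's theorem, applied on a large closed ball, then gives a fixed point $\mathbf u\in C^1$ solving $\mathbf u'=\widetilde{\mathbf F}(t,\mathbf u)$ with $\mathbf u(0)=\mathbf u_0$.

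Next, I will show that $\underline{\mathbf u}\le\mathbf u\le\bar{\mathbf u}$. Once this holds, $p(t,\mathbf u)=\mathbf u$ and the correction term vanishes, so $\mathbf u$ solves \eqref{osystem}. Consider the upper bound for component $1$, and suppose $w:=u_1-\bar u_1$ is positive somewhere. Then, since $w(0)\le0$, there is an interval $(t_1,t_2]$ with $w(t_1)=0$ and $w>0$ on $(t_1,t_2]$. On this interval $p_1=\bar u_1$ and $p_2\le\bar u_2$. Cooperativity, namely that $F_1$ is non-decreasing in $u_2$, gives $F_1(t,\bar u_1,p_2)\le F_1(t,\bar u_1,\bar u_2)\le\bar u_1'$. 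Hence
$$
w'=\widetilde F_1-\bar u_1'\le-\frac{w}{1+w}<0
$$
on $(t_1,t_2]$, which contradicts $w(t_1)=0<w(t_2)$. The second component and the lower bounds are handled in exactly the same way; for the lower bound, cooperativity is used in the form $F_1(t,\underline u_1,p_2)\ge F_1(t,\underline u_1,\underline u_2)\ge\underline u_1'$.

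The main point of care is this comparison step. In particular, one must note that only the off-diagonal monotonicity is needed, because the truncated diagonal argument is frozen at the barrier. This is exactly why continuity alone suffices, without any Lipschitz assumption, and so the method applies to the non-Lipschitz and singular fields arising in Section~\ref{Section3}.
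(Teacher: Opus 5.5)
Your proof is correct and follows the paper's argument. Both use the same truncation $p_i$ and Schauder's theorem on a ball in $C([0,T];\mathbb R^2)$. Both then compare by freezing the diagonal argument at the barrier, so only the off-diagonal monotonicity of $F_1$ in $u_2$ and of $F_2$ in $u_1$ is needed. The one difference is your penalty term $-\frac{u_i-p_i}{1+|u_i-p_i|}$, which is harmless but unnecessary: the paper omits it because the non-strict inequality $w'\le 0$ on $(t_1,t_2]$, together with $w(t_1)=0$, already contradicts $w(t_2)>0$.
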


\begin{proof}
For $i=1,2$, define the continuous truncation mapping
\begin{equation*}
\mathbf{p}(t,\mathbf{u})=\bigl(p_1(t,u_1),p_2(t,u_2)\bigr)^T,
\end{equation*}
where $p_i(t,u_i):=\max\left\{\underline{u}_i(t),\min\left\{u_i,\bar{u}_i(t)\right\}\right\}$.
Consider the modified initial value problem
\begin{equation}\label{msystem}
\left\{
\begin{aligned}
\mathbf{u}^\prime(t)&=\mathbf{F}\bigl(t,\mathbf{p}(t,\mathbf{u}(t))\bigr)=:\widetilde{\mathbf{F}}(t,\mathbf{u}(t)), \quad t\in[0,T],\\
\mathbf{u}(0)&=\mathbf{u}_0.
\end{aligned}
\right.
\end{equation}
Set $D:=\left\{(t,\mathbf{v})\in[0,T]\times\mathbb{R}^2:\underline{\mathbf{u}}(t)\leq\mathbf{v}\leq\overline{\mathbf{u}}(t)\right\}$.
Since $\underline{\mathbf{u}}$ and $\overline{\mathbf{u}}$ are continuous, $D$ is compact. Moreover, $(t,\mathbf{p}(t,\mathbf{u}))\in D$ for any $(t,\mathbf{u})\in[0,T]\times\mathbb{R}^2$. The continuity of $\mathbf{F}$ implies that the modified vector field $\widetilde{\mathbf{F}}$ is continuous and globally bounded on $[0,T]\times\mathbb{R}^2$. Hence, there exists a constant $M>0$ such that
\begin{equation*}
\|\widetilde{\mathbf{F}}(t,\mathbf{u})\|_{\mathbb{R}^2}\leq M, \quad (t,\mathbf{u})\in[0,T]\times\mathbb{R}^2.
\end{equation*}
Let $X=C([0,T];\mathbb{R}^2)$ be equipped with the uniform norm $\|\mathbf{u}\|_\infty:=\max\limits_{t\in[0,T]}\|\mathbf{u}(t)\|_{\mathbb{R}^2}$.
The problem \eqref{msystem} is equivalent to the integral equation
$$
(\mathcal{T}\mathbf{u})(t):=\mathbf{u}_0+\int_0^t\widetilde{\mathbf{F}}(s,\mathbf{u}(s))ds.
$$
Define
$B:=\left\{\mathbf{u}\in X:\|\mathbf{u}-\mathbf{u}_0\|_\infty\leq MT\right\}$.
Clearly, $B$ is nonempty, closed, bounded, and convex. For any $\mathbf{u}\in B$ and $t\in[0,T]$, we have
$$
\|(\mathcal{T}\mathbf{u})(t)-\mathbf{u}_0\|_{\mathbb{R}^2}\leq\int_0^t\|\widetilde{\mathbf{F}}(s,\mathbf{u}(s))\|_{\mathbb{R}^2}ds\leq MT,
$$
and hence $\mathcal{T}(B)\subseteq B$. Furthermore, the uniform boundedness of $\widetilde{\mathbf{F}}$ implies that $\mathcal{T}(B)$ is uniformly bounded and equicontinuous. If $\mathbf{u}_j\to\mathbf{u}$ in $X$, then the uniform continuity of $\widetilde{\mathbf{F}}$ on a suitable compact set yields $\mathcal{T}\mathbf{u}_j\to\mathcal{T}\mathbf{u}$ in $X$. Thus, $\mathcal{T}$ is continuous. By the Arzel\`a--Ascoli theorem, $\mathcal{T}(B)$ is relatively compact in $X$, and hence $\mathcal{T}:B\to B$ is completely continuous. Consequently, Schauder's fixed-point theorem yields a fixed point $\mathbf{u}\in B$.
Since $\mathbf{u}$ is a fixed point of $\mathcal{T}$ and $t\mapsto\widetilde{\mathbf{F}}(t,\mathbf{u}(t))$ is continuous, the fundamental theorem of calculus implies that $\mathbf{u}\in C^1([0,T];\mathbb{R}^2)$ and solves \eqref{msystem}.

It remains to show that the solution of \eqref{msystem} satisfies
\begin{equation}\label{sub-super}
\underline{\mathbf{u}}(t)\leq\mathbf{u}(t)\leq\overline{\mathbf{u}}(t), \quad t\in[0,T].
\end{equation}
Suppose that the upper bound fails. Without loss of generality, there exist $t_0\in[0,T)$ and $\delta>0$ such that $w(t):=u_1(t)-\overline{u}_1(t)$ satisfies $w(t_0)=0$ and $w(t)>0$ for $t\in(t_0,t_0+\delta]$. For any $t\in(t_0,t_0+\delta]$, the definition of $\mathbf{p}$ gives $p_1(t,u_1(t))=\overline{u}_1(t)$ and $p_2(t,u_2(t))\leq\overline{u}_2(t)$. Hence, by the cooperativity of $\mathbf{F}$ and the supersolution property of $\overline{\mathbf{u}}$, we deduce that
$$
u_1^\prime(t)=F_1\left(t,\overline{u}_1(t),p_2(t,u_2(t))\right)\leq F_1\left(t,\overline{u}_1(t),\overline{u}_2(t)\right)\leq\overline{u}_1^\prime(t).
$$
Thus, $w^\prime(t)\leq0$ on $(t_0,t_0+\delta]$. Integrating from $t_0$ to $t$ gives $w(t) \le w(t_0) = 0$, contradicting $w(t) > 0$. Similar arguments apply to the lower bound and the second component. Therefore, \eqref{sub-super} holds, and $\mathbf{p}(t,\mathbf{u}(t))=\mathbf{u}(t)$ on $[0,T]$. Hence, $\mathbf{u}$ solves the original problem \eqref{osystem}.
\end{proof}

Based on the preceding sub- and supersolution method, we now establish local existence for problem \eqref{CP} under condition (A).

\begin{theorem}[Local Existence under condition (A)]\label{Th3.1}
Assume that $f$ and $g$ satisfy condition (A), and let $q\in\mathbb{R}$. Suppose further that either $f(t)>0$ for all $t>0$, or $q<1$. Then, for any initial value $a>0$, there exists $R>0$ such that \eqref{CP}
admits a positive solution $\varphi\in C^1[0,R]\cap C^2(0,R]$ satisfying $\varphi^\prime(r)>0$ and $\varphi^{\prime\prime}(r)\geq0$ for all $r\in(0,R]$. Moreover,
\begin{itemize}
\item[(i)] If $q\geq0$, then $\varphi\in C^2[0,R]$.
\item[(ii)] If $q<0$, then $\varphi\in C^2(0,R]\cap W^{2,\delta}[0,R]$ for $\delta\in[1,1-\frac{1}{q})$.
\end{itemize}
\end{theorem}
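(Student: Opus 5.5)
We would rewrite \eqref{CP} as a cooperative first-order system for $(\varphi,\varphi')$ and then apply Lemma \ref{Lem3.2} with explicit radial barriers. For $k=1$ the equation is linear in $\varphi''$ and is handled the same way with $\mathcal F(y,z)=c_3y+c_2z$.

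\textbf{Step 1: the system.} For $z>0$ and $\eta>0$, let $y=Y(z/r,\eta)$ be the unique solution of $\mathcal F(y,z/r)=\eta$. Uniqueness follows because $\mathcal F$ is strictly increasing in $y$, as in the proof of Lemma \ref{Lem2.2}. Since $\mathcal F$ is also increasing in its second argument, $Y$ is decreasing in $z/r$ and increasing in $\eta$. Setting $u_1=\varphi$ and $u_2=\varphi'$, the problem becomes
$$
u_1'=u_2,\qquad u_2'=Y\bigl(u_2/r,\;C_0[f(u_1)+g(u_1)u_2^q]\bigr).
$$
The first component is non-decreasing in $u_2$. The second is non-decreasing in $u_1$, because $f$ and $g$ are non-decreasing and $g\ge0$ under (A). So the system is cooperative in the sense of Definition \ref{Def_Cooperative}.

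It is not cooperative in the self-variable $u_2$, but Lemma \ref{Lem3.2} does not require that. The difficulty is the singularity at $r=0$ and at $u_2=0$. To avoid it, we would work on $[\varepsilon,R]$ after truncating: replace $u_2$ by $\max\{u_2,\underline u_2(r)\}$ inside $Y$ and $|u_2|^q$, and use the barriers to keep everything continuous. We then pass $\varepsilon\to0$ by Arzelà--Ascoli, using the uniform bounds $\underline u\le u\le\bar u$.

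\textbf{Step 2: barriers.} These come from comparison equations of the form in Lemma \ref{Lem3.1}. Convexity $\varphi''\ge0$, together with the bound $\varphi'/r\ge\varphi''$ it gives, yields the two-sided bound
$$
c\,(\varphi''+\varphi'/r)\le \mathcal F\le C\,(\varphi''+\varphi'/r).
$$
This reduces the problem to the divergence-type equation $(r\varphi')'=r\cdot(\text{const})\,h$, whose integral form $\varphi'=\text{const}\cdot r^{-1}\int_0^r s\,h\,ds$ has the structure of Lemma \ref{Lem3.1}.

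For the supersolution $\bar u$, take the solution given by Lemma \ref{Lem3.1} with the larger constant and with $h$ replaced by $f(\varphi)+g(\varphi)\varphi'^q$. A gradient-dependent term is not covered by Lemma \ref{Lem3.1} directly, so we would bound $\varphi'^q$ by a power of $r$ times a constant. For the subsolution $\underline u$, take the solution with the smaller constant and $h\ge f$.

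The cases $f(a)=0$ with $0<q<1$, and $q<0$, need special barriers:
\begin{itemize}
\item For $q<0$, use the explicit power profile $\varphi'\sim Dr^{1/(1-q)}$ of Remark \ref{Rem2.2-iv}(ii), with slightly perturbed constants for the sub- and supersolution.
\item For $f(a)=0$ with $0<q<1$ and $g(a)>0$, use $\varphi'\sim r^{1/(1-q)}$ in the same way.
\item If $f(a)=0=g(a)$, then (A) gives $f+g>0$ to the right of $a$, and we would exploit this with a slightly shifted initial value.
\end{itemize}
Verifying the differential inequalities reduces to algebraic inequalities between constants for small $R$, and these must also respect the ordering $\underline u\le\bar u$.

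\textbf{Step 3: regularity.} The constructed solution satisfies $\varphi'>0$ and $\varphi''\ge0$ near $0$, since it is squeezed between convex increasing barriers and $Y\ge0$. Regularity then follows from Lemma \ref{Lem2.2}:
\begin{itemize}
\item parts (i) and (ii) give $C^2[0,R]$, using $\varphi'/r=O(1)$ from the supersolution;
\item part (iii) covers the degenerate case $f(a)=0$;
\item part (iv) gives the $W^{2,\delta}$ statement for $q<0$.
\end{itemize}
Lemma \ref{Lem2.3} extends positivity and convexity to all of $(0,R]$.

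\textbf{Main obstacle.} We expect the main difficulty to be building barriers in the degenerate case $f(a)=0$, $0<q<1$, and in the singular case $q<0$. There the exact asymptotic exponent must be matched: a subsolution that is too small stalls at $\varphi'\equiv0$, and the vector field fails to be Lipschitz there. We would try first the power ansatz $\underline u_2=\kappa r^{1/(1-q)}$, $\bar u_2=K r^{1/(1-q)}$ with $\kappa<D<K$.
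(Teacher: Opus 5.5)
The architecture (invert in $\varphi''$, cooperative system, Lemma \ref{Lem3.2} on $[\varepsilon,R]$, Arzel\`a--Ascoli as $\varepsilon\to0$, then Lemma \ref{Lem2.2}) matches the paper. The gap is in the power barriers you propose for the nonregular regimes: in each regime they fail on one side.

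\textbf{Degenerate case $0<q<1$, $f(a)=0$.} The supersolution $\bar u_2=Kr^{\frac1{1-q}}$ does not work in general. Set $A:=\mathcal F(\frac1{1-q},1)$. The required inequality is $KA\geq C_0f(\bar\varphi(r))\,r^{-\frac q{1-q}}+C_0g(\bar\varphi(r))K^q$. Since $\bar\varphi(r)-a\sim r^{\frac{2-q}{1-q}}$, this forces $f(a+s)=O(s^{\frac q{2-q}})$ as $s\to0^+$. Take $f(t)=((t-a)^+)^{\epsilon}$ with $\epsilon(2-q)<q$ and $g\equiv1$; these satisfy (A), and the inequality fails for every $K$. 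In fact every increasing solution then satisfies $\varphi'\gtrsim r^{1+\frac{\epsilon(2-q)}{1-q}}\gg r^{\frac1{1-q}}$ near $0$. To see this, integrate $(r^{\gamma}\varphi')'\geq\frac{kC_0}{n-1}r^{\gamma}\bigl(f(\varphi)+g(\varphi)\varphi'^q\bigr)$, $\gamma=n+k-2$: first the gradient term gives $\varphi'\gtrsim r^{\frac1{1-q}}$, and then the $f$ term gives the faster rate. The supersolution has to be the regular one, $\bar\varphi'\sim cr$, generated as in the paper by $\widetilde g=f+(1-q)g^{\frac1{1-q}}$ with $\widetilde g(a)>0$. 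It dominates $\kappa r^{\frac1{1-q}}$ because $\frac1{1-q}>1$.

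\textbf{Singular case $q<0$, $g(a)=0$.} This is allowed by (A), and then $f(a)>0$. Here $D=0$ and no power subsolution exists: $\kappa Ar^{\frac q{1-q}}\to+\infty$, whereas the right-hand side is $C_0f(\underline\varphi)+o(r^{\frac q{1-q}})$. You need the regular subsolution $\underline\varphi'\sim\kappa r$ with $\kappa\,\mathcal F(1,1)<C_0f(a)$, which lies below $Kr^{\frac1{1-q}}$ because $\frac1{1-q}<1$. This is the second subcase of Case III in the paper. Your third bullet is vacuous: (A) at $t=a>0$ already gives $f(a)+g(a)>0$, and shifting the center value would not be admissible anyway.

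\textbf{Convexity in Step 3.} Lying between convex barriers says nothing about the monotonicity of $\varphi'$. Also, $Y\geq0$ is not automatic: it is equivalent to $\mathcal F(0,\varphi'/r)\leq C_0h$, which nothing in your construction ensures. It can fail near $r=\varepsilon$ if the approximating problem is started at $\bar{\mathbf u}(\varepsilon)$. Since $\varphi''\geq0$ near $0$ is part of the conclusion and a hypothesis of Lemma \ref{Lem2.2}~(iii)--(iv) and of Lemma \ref{Lem2.3}, it must be supplied. There are two ways to do this.
\begin{itemize}
\item The paper starts each approximating problem on $[\rho,R]$ at $\underline{\mathbf u}(\rho)$. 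The subsolution inequality and strict monotonicity of $\mathcal F$ in $\varphi''$ then give $\varphi_\rho''(\rho)\geq\underline\varphi''(\rho)>0$. Convexity propagates on $[\rho,R]$ and passes to the limit of the non-decreasing $\varphi_{\rho_j}'$.
\item Alternatively, the $\sigma$--$\tau$ argument in the proof of Lemma \ref{Lem2.3}~(ii) applies directly to your limit solution. It uses only $\varphi'(0)=0<\varphi'$ on $(0,R]$ and the monotonicity of $f$, $g$ and $\mathcal F$.
\end{itemize}

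Lastly, the comparability of $\mathcal F(\varphi'',\varphi'/r)$ with $\varphi''+\varphi'/r$ for $\varphi''\geq0$, $\varphi'>0$ is true by homogeneity. It does not hold because convexity gives $\varphi'/r\geq\varphi''$, which convexity does not give.
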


\begin{proof}
We construct subsolutions and supersolutions of \eqref{sODE} satisfying $\varphi(0)=a$ and $\varphi^\prime(0)=0$. Since $f(a)+g(a)>0$, we distinguish the following three cases.

\textbf{Case I (Regular case): $q=0$, or $q>0$ and $f(a)>0$.} Set
$$
\bar C_0:=C_0\left(\frac{2(1+c_1)}{c_3+c_2}\right)^{\frac1k},\quad \underline C_0:=C_0\left(\frac{1+c_1}{2(c_3+c_2)}\right)^{\frac1k}.
$$
Define
$$
\widehat g(t):=
\begin{cases}
g(t),&q=0,\\
(1-q)g(t)^{\frac1{1-q}},&0<q<1,\\
f(t),&q\geq1,
\end{cases}
\quad
\widetilde g(t):=f(t)+\widehat g(t).
$$
To construct a supersolution, by Lemma \ref{Lem3.1}, there exists $R_1^{\mathrm{super}}>0$ such that
\begin{equation}\label{eq2.16}
\left\{
\begin{aligned}
\overline\varphi^\prime(r)&=2\bar C_0r^{-c_1}\int_0^r s^{c_1}\widetilde g(\overline\varphi(s))ds,\quad r>0,\\
\overline\varphi(0)&=a
\end{aligned}
\right.
\end{equation}
admits a positive solution $\overline\varphi\in C^2[0,R_1^{\mathrm{super}}]$ satisfying $\overline\varphi^\prime(0)=0$, $\overline\varphi^\prime(r)>0$, and $\overline\varphi^{\prime\prime}(r)>0$ on $(0,R_1^{\mathrm{super}}]$. Differentiating \eqref{eq2.16}, we obtain
$$
\overline\varphi^{\prime\prime}+c_1\frac{\overline\varphi^\prime}{r}
=2\bar C_0\widetilde g(\overline\varphi).
$$
Since $\overline\varphi\in C^2[0,R_1^{\mathrm{super}}]$, $\overline\varphi^\prime(0)=0$, and $\overline\varphi^{\prime\prime}(0)>0$, we have $\lim\limits_{r\to0^+}\frac{\overline\varphi^\prime(r)}r=\overline\varphi^{\prime\prime}(0)$, and hence
$$
\lim\limits_{r\to0^+}
\frac{c_3\overline\varphi^{\prime\prime}+c_2\frac{\overline\varphi^\prime}{r}}{\overline\varphi^{\prime\prime}+c_1\frac{\overline\varphi^\prime}{r}}=\frac{c_3+c_2}{1+c_1}.
$$
There exists $\delta_0\in(0,R_1^{\mathrm{super}}]$ such that
\begin{align*}
\left(\overline\varphi^{\prime\prime}+c_1\frac{\overline\varphi^\prime}{r}\right)^{\frac{k-1}{k}}\left(c_3\overline\varphi^{\prime\prime}+c_2\frac{\overline\varphi^\prime}{r}\right)^{\frac1k}
\geq\left(\frac{c_3+c_2}{2(1+c_1)}\right)^{\frac1k}\left(\overline\varphi^{\prime\prime}+c_1\frac{\overline\varphi^\prime}{r}\right)
=2C_0\widetilde g(\overline\varphi),\quad r\in(0,\delta_0].
\end{align*}
We next estimate the right-hand side of \eqref{sODE}. If $q=0$, then $f(\overline\varphi)+g(\overline\varphi)=\widetilde g(\overline\varphi)$.
If $0<q<1$, Young's inequality gives
$$
g(\overline\varphi)(\overline\varphi^\prime)^q
\leq q\overline\varphi^\prime+(1-q)g(\overline\varphi)^{\frac1{1-q}}
=q\overline\varphi^\prime+\widehat g(\overline\varphi).
$$
Since $\overline\varphi^\prime(r)\to0$ and $\widetilde g(\overline\varphi(r))\to\widetilde g(a)>0$ as $r\to0^+$, we have $\lim\limits_{r\to0^+}\frac{q\overline\varphi^\prime(r)}{\widetilde g(\overline\varphi(r))}=0$. There exists $\delta_1\in(0,R_1^{\mathrm{super}}]$ such that $q\overline\varphi^\prime(r)\leq\widetilde g(\overline\varphi(r))$ on $(0,\delta_1]$, and
$$
f(\overline\varphi)+g(\overline\varphi)(\overline\varphi^\prime)^q
\leq\widetilde g(\overline\varphi)+q\overline\varphi^\prime
\leq2\widetilde g(\overline\varphi),
\quad r\in(0,\delta_1].
$$
If $q\geq1$, then $f(a)>0$ and $\lim\limits_{r\to0^+}\frac{g(\overline\varphi(r))(\overline\varphi^\prime(r))^q}{f(\overline\varphi(r))}=0$.
There exists $\delta_1\in(0,R_1^{\mathrm{super}}]$ such that $g(\overline\varphi)(\overline\varphi^\prime)^q\leq f(\overline\varphi)$ on $(0,\delta_1]$. Since $\widehat g=f$ in this case, it follows that
$$
f(\overline\varphi)+g(\overline\varphi)(\overline\varphi^\prime)^q
\leq2f(\overline\varphi)
=\widetilde g(\overline\varphi),
\quad r\in(0,\delta_1].
$$
For $q=0$, set $\delta_1=R_1^{\mathrm{super}}$, and let $\bar r:=\min\{\delta_0,\delta_1\}$. Combining these estimates, we obtain
\begin{align*}
\left(\overline\varphi^{\prime\prime}+c_1\frac{\overline\varphi^\prime}{r}\right)^{\frac{k-1}{k}}
\left(c_3\overline\varphi^{\prime\prime}+c_2\frac{\overline\varphi^\prime}{r}\right)^{\frac1k}
\geq2C_0\widetilde g(\overline\varphi)
\geq C_0\left[f(\overline\varphi)+g(\overline\varphi)(\overline\varphi^\prime)^q\right],
\quad r\in(0,\bar r].
\end{align*}
Therefore, $\overline\varphi$ is a supersolution of \eqref{CP} on $[0,\bar r]$.

To construct a subsolution, define
$$
\widetilde f(t):=
\begin{cases}
f(t)+g(t),&q=0,\\
f(t),&q\neq0.
\end{cases}
$$
By Lemma \ref{Lem3.1}, there exists $R_1^{\mathrm{sub}}>0$ such that
\begin{equation}\label{eq2.21}
\left\{
\begin{aligned}
\underline\varphi^\prime(r)&=\underline C_0r^{-c_1}\int_0^r s^{c_1}\widetilde f(\underline\varphi(s))ds,\quad r>0,\\
\underline\varphi(0)&=a
\end{aligned}
\right.
\end{equation}
admits a positive solution $\underline\varphi\in C^2[0,R_1^{\mathrm{sub}}]$ satisfying $\underline\varphi^\prime(0)=0$, $\underline\varphi^\prime(r)>0$, and $\underline\varphi^{\prime\prime}(r)>0$ on $(0,R_1^{\mathrm{sub}}]$. Differentiating \eqref{eq2.21}, we obtain
$$
\underline\varphi^{\prime\prime}+c_1\frac{\underline\varphi^\prime}{r}
=\underline C_0\widetilde f(\underline\varphi).
$$
Since $\lim\limits_{r\to0^+}\frac{\underline\varphi^\prime(r)}r=\underline\varphi^{\prime\prime}(0)>0$, we have
$$
\lim\limits_{r\to0^+}
\frac{c_3\underline\varphi^{\prime\prime}+c_2\frac{\underline\varphi^\prime}{r}}
{\underline\varphi^{\prime\prime}+c_1\frac{\underline\varphi^\prime}{r}}
=\frac{c_3+c_2}{1+c_1}.
$$
Therefore, there exists $\delta_2\in(0,R_1^{\mathrm{sub}}]$ such that
\begin{align*}
\left(\underline\varphi^{\prime\prime}+c_1\frac{\underline\varphi^\prime}{r}\right)^{\frac{k-1}{k}}
\left(c_3\underline\varphi^{\prime\prime}+c_2\frac{\underline\varphi^\prime}{r}\right)^{\frac1k}
&\leq\left(\frac{2(c_3+c_2)}{1+c_1}\right)^{\frac1k}
\left(\underline\varphi^{\prime\prime}+c_1\frac{\underline\varphi^\prime}{r}\right)\\
&=C_0\widetilde f(\underline\varphi)\\
&\leq C_0\left[f(\underline\varphi)+g(\underline\varphi)(\underline\varphi^\prime)^q\right],
\quad r\in(0,\delta_2].
\end{align*}
Thus, $\underline\varphi$ is a subsolution of \eqref{CP} on $[0,\delta_2]$. Moreover,
$$
\overline\varphi^{\prime\prime}(0)
=\frac{2\bar C_0\widetilde g(a)}{1+c_1},
\quad
\underline\varphi^{\prime\prime}(0)
=\frac{\underline C_0\widetilde f(a)}{1+c_1}.
$$
Since $\widetilde g(a)\geq\widetilde f(a)>0$ and $2\bar C_0>\underline C_0$, we have $\overline\varphi^{\prime\prime}(0)>\underline\varphi^{\prime\prime}(0)$. Choose $\delta_3>0$ sufficiently small that $\delta_3\leq\min\{R_1^{\mathrm{super}},R_1^{\mathrm{sub}}\}$ and $\underline\varphi^\prime(r)\leq\overline\varphi^\prime(r)$ on $[0,\delta_3]$. Since $\underline\varphi(0)=\overline\varphi(0)=a$, integration gives $\underline\varphi(r)\leq\overline\varphi(r)$ on $[0,\delta_3]$. Set
$$
R_1:=\min\{\bar r,\delta_2,\delta_3\}.
$$
Then $\underline\varphi$ and $\overline\varphi$ are the required subsolution and supersolution on $[0,R_1]$.

\textbf{Case II (Degenerate case): $0<q<1$ and $f(a)=0$.} Since $f(a)+g(a)>0$, we have $g(a)>0$. Set
$$
\bar C_0:=C_0\left(\frac{2(1+c_1)}{c_3+c_2}\right)^{\frac1k},\quad
\underline C_0:=C_0\left[\frac{\frac1{1-q}+c_1}{2\left(\frac{c_3}{1-q}+c_2\right)}\right]^{\frac1k}.
$$
Define
$$
\widehat g(t):=(1-q)g(t)^{\frac1{1-q}},\quad
\widetilde g(t):=f(t)+\widehat g(t).
$$
Since $\widetilde g(a)>0$, the supersolution argument in Case I applies. More precisely, let $\overline\varphi\in C^2[0,R_1^{\mathrm{super}}]$ be the corresponding solution of \eqref{eq2.16} given by Lemma \ref{Lem3.1}. Then there exists $\bar r_0\in(0,R_1^{\mathrm{super}}]$ such that
$$
\left(\overline\varphi^{\prime\prime}+c_1\frac{\overline\varphi^\prime}{r}\right)^{\frac{k-1}{k}}
\left(c_3\overline\varphi^{\prime\prime}+c_2\frac{\overline\varphi^\prime}{r}\right)^{\frac1k}
\geq C_0\left[f(\overline\varphi)+g(\overline\varphi)(\overline\varphi^\prime)^q\right],
\quad r\in(0,\bar r_0].
$$
Thus, $\overline\varphi$ is a supersolution of \eqref{CP} on $[0,\bar r_0]$.

To construct a subsolution, by Lemma \ref{Lem3.1}, there exists $R_2^{\mathrm{sub}}>0$ such that
\begin{equation}\label{eq2.21b}
\left\{
\begin{aligned}
\underline\varphi^\prime(r)&=\left[\underline C_0(1-q)r^{-c_1(1-q)}
\int_0^r s^{c_1(1-q)}g(\underline\varphi(s))ds\right]^{\frac1{1-q}},\quad r>0,\\
\underline\varphi(0)&=a
\end{aligned}
\right.
\end{equation}
admits a positive solution $\underline\varphi\in C^1[0,R_2^{\mathrm{sub}}]\cap C^2(0,R_2^{\mathrm{sub}}]$ satisfying $\underline\varphi^\prime(0)=0$, $\underline\varphi^\prime(r)>0$, and $\underline\varphi^{\prime\prime}(r)>0$ on $(0,R_2^{\mathrm{sub}}]$. Differentiating \eqref{eq2.21b}, we obtain
$$
\underline\varphi^{\prime\prime}+c_1\frac{\underline\varphi^\prime}{r}
=\underline C_0g(\underline\varphi)(\underline\varphi^\prime)^q.
$$
Moreover, \eqref{eq2.21b} gives
$$
\lim\limits_{r\to0^+}\frac{\underline\varphi^\prime(r)}{r^{\frac1{1-q}}}
=\left[\frac{\underline C_0(1-q)g(a)}{c_1(1-q)+1}\right]^{\frac1{1-q}}>0,
\quad
\lim\limits_{r\to0^+}
\frac{\underline\varphi^{\prime\prime}(r)}{\underline\varphi^\prime(r)/r}
=\frac1{1-q}.
$$
Then
$$
\lim\limits_{r\to0^+}\frac{c_3\underline\varphi^{\prime\prime}+c_2\frac{\underline\varphi^\prime}{r}}{\underline\varphi^{\prime\prime}+c_1\frac{\underline\varphi^\prime}{r}}
=\frac{\frac{c_3}{1-q}+c_2}{\frac1{1-q}+c_1}.
$$
Therefore, there exists $\delta_1\in(0,R_2^{\mathrm{sub}}]$ such that
\begin{align*}
\left(\underline\varphi^{\prime\prime}+c_1\frac{\underline\varphi^\prime}{r}\right)^{\frac{k-1}{k}}
\left(c_3\underline\varphi^{\prime\prime}+c_2\frac{\underline\varphi^\prime}{r}\right)^{\frac1k}
\leq C_0g(\underline\varphi)(\underline\varphi^\prime)^q
\leq C_0\left[f(\underline\varphi)+g(\underline\varphi)(\underline\varphi^\prime)^q\right],
\quad r\in(0,\delta_1].
\end{align*}
Thus, $\underline\varphi$ is a subsolution of \eqref{CP} on $[0,\delta_1]$.
Since $\overline\varphi^{\prime\prime}(0)=\frac{2\bar C_0\widetilde g(a)}{1+c_1}>0$ and $0<q<1$, we have
$$
\lim\limits_{r\to0^+}
\frac{\underline\varphi^\prime(r)}{\overline\varphi^\prime(r)}
=\lim\limits_{r\to0^+}\frac{\underline\varphi^\prime(r)}{r^{\frac1{1-q}}}\frac{r}{\overline\varphi^\prime(r)}r^{\frac{q}{1-q}}
=0.
$$
Hence, there exists $\delta_2\in(0,\min\{\bar r_0,R_2^{\mathrm{sub}}\}]$ such that $\underline\varphi^\prime(r)\leq\overline\varphi^\prime(r)$ on $[0,\delta_2]$. Since $\underline\varphi(0)=\overline\varphi(0)=a$, integration gives $\underline\varphi(r)\leq\overline\varphi(r)$ on $[0,\delta_2]$. Set
$$
R_2:=\min\{\bar r_0,\delta_1,\delta_2\}.
$$
Then $\underline\varphi$ and $\overline\varphi$ are the required subsolution and supersolution on $[0,R_2]$.

\textbf{Case III (Singular case): $q<0$.} To construct a supersolution, set
$$
\bar C_0:=C_0\left[\frac{2\left(\frac1{1-q}+c_1\right)}{\frac{c_3}{1-q}+c_2}\right]^{\frac1k}.
$$
Since $f(a)+g(a)>0$, by Lemma \ref{Lem3.1}, there exists $R_2^{\mathrm{super}}>0$ such that
\begin{equation}\label{eq2.17}
\left\{
\begin{aligned}
\overline\varphi^\prime(r)&=\left[2\bar C_0(1-q)r^{-c_1(1-q)}
\int_0^r s^{c_1(1-q)}\left(f(\overline\varphi(s))+g(\overline\varphi(s))\right)ds\right]^{\frac1{1-q}},\quad r>0,\\
\overline\varphi(0)&=a
\end{aligned}
\right.
\end{equation}
admits a positive solution $\overline\varphi\in C^1[0,R_2^{\mathrm{super}}]\cap C^2(0,R_2^{\mathrm{super}}]$ satisfying $\overline\varphi^\prime(0)=0$, $\overline\varphi^\prime(r)>0$, and $\overline\varphi^{\prime\prime}(r)>0$ on $(0,R_2^{\mathrm{super}}]$.
Since $\lim\limits_{r\to0^+}\overline\varphi^\prime(r)=0$, there exists $\delta_1\in(0,R_2^{\mathrm{super}}]$ such that $\overline\varphi^\prime(r)\leq1$ on $(0,\delta_1]$. Since $q<0$ and $f\geq0$, we have
$$
f(\overline\varphi)+g(\overline\varphi)(\overline\varphi^\prime)^q
\leq\left(f(\overline\varphi)+g(\overline\varphi)\right)(\overline\varphi^\prime)^q,
\quad r\in(0,\delta_1].
$$
Differentiating \eqref{eq2.17}, we obtain
$$
\overline\varphi^{\prime\prime}+c_1\frac{\overline\varphi^\prime}{r}
=2\bar C_0\left(f(\overline\varphi)+g(\overline\varphi)\right)(\overline\varphi^\prime)^q.
$$
Moreover, \eqref{eq2.17} gives
$$
\lim\limits_{r\to0^+}
\frac{\overline\varphi^\prime(r)}{r^{\frac1{1-q}}}
=\left[\frac{2\bar C_0(1-q)(f(a)+g(a))}{c_1(1-q)+1}\right]^{\frac1{1-q}}>0,
\quad
\lim\limits_{r\to0^+}
\frac{\overline\varphi^{\prime\prime}(r)}{\overline\varphi^\prime(r)/r}
=\frac1{1-q}.
$$
Then
$$
\lim\limits_{r\to0^+}\frac{c_3\overline\varphi^{\prime\prime}+c_2\frac{\overline\varphi^\prime}{r}}{\overline\varphi^{\prime\prime}+c_1\frac{\overline\varphi^\prime}{r}}
=\frac{\frac{c_3}{1-q}+c_2}{\frac1{1-q}+c_1}.
$$
Thus, there exists $\delta_2\in(0,R_2^{\mathrm{super}}]$ such that
\begin{align*}
\left(\overline\varphi^{\prime\prime}+c_1\frac{\overline\varphi^\prime}{r}\right)^{\frac{k-1}{k}}
\left(c_3\overline\varphi^{\prime\prime}+c_2\frac{\overline\varphi^\prime}{r}\right)^{\frac1k}
\geq2C_0\left(f(\overline\varphi)+g(\overline\varphi)\right)(\overline\varphi^\prime)^q
\geq C_0\left[f(\overline\varphi)+g(\overline\varphi)(\overline\varphi^\prime)^q\right]
\end{align*}
for $r\in(0,\min\{\delta_1,\delta_2\}]$.
Therefore, $\overline\varphi$ is a supersolution of \eqref{CP} on $[0,\min\{\delta_1,\delta_2\}]$.

We next construct a subsolution according to the value of $g(a)$.
If $g(a)>0$, set
$$
\underline C_0:=C_0\left[\frac{\frac1{1-q}+c_1}{2\left(\frac{c_3}{1-q}+c_2\right)}\right]^{\frac1k},
$$
and let $\underline\varphi$ be the solution of \eqref{eq2.21b} given by Lemma \ref{Lem3.1} on $[0,R_2^{\mathrm{sub}}]$. Thus, there exists $\delta_0\in(0,R_2^{\mathrm{sub}}]$ such that $\underline\varphi$ is a subsolution of \eqref{CP} on $[0,\delta_0]$. Moreover, \eqref{eq2.17} and \eqref{eq2.21b} give $\lim\limits_{r\to0^+}\frac{\underline\varphi^\prime(r)}{\overline\varphi^\prime(r)}=\left[\frac{\underline C_0g(a)}{2\bar C_0(f(a)+g(a))}\right]^{\frac1{1-q}}<1$.
Therefore, there exists $\delta_3\in(0,\min\{R_2^{\mathrm{sub}},R_2^{\mathrm{super}}\}]$ such that $\underline\varphi^\prime(r)\leq\overline\varphi^\prime(r)$ for $r\in[0,\delta_3]$.

If $g(a)=0$, then $f(a)>0$. Set
$$
\underline C_0:=C_0\left(\frac{1+c_1}{2(c_3+c_2)}\right)^{\frac1k},
$$
and let $\underline\varphi\in C^2[0,R_1^{\mathrm{sub}}]$ be the solution of \eqref{eq2.21} given by Lemma \ref{Lem3.1}. Here $\widetilde f=f$, since $q<0$. By the subsolution argument in Case I, there exists $\delta_0\in(0,R_1^{\mathrm{sub}}]$ such that
$$
\left(\underline\varphi^{\prime\prime}+c_1\frac{\underline\varphi^\prime}{r}\right)^{\frac{k-1}{k}}
\left(c_3\underline\varphi^{\prime\prime}+c_2\frac{\underline\varphi^\prime}{r}\right)^{\frac1k}
\leq C_0f(\underline\varphi)
\leq C_0\left[f(\underline\varphi)+g(\underline\varphi)(\underline\varphi^\prime)^q\right],
\quad r\in(0,\delta_0].
$$
Thus, $\underline\varphi$ is a subsolution of \eqref{CP} on $[0,\delta_0]$. Since $\lim\limits_{r\to0^+}\frac{\underline\varphi^\prime(r)}r
=\frac{\underline C_0f(a)}{1+c_1}>0$ and $\frac1{1-q}<1$, the limit for $\overline\varphi^\prime$ gives $\lim\limits_{r\to0^+}\frac{\underline\varphi^\prime(r)}{\overline\varphi^\prime(r)}=0$.
Therefore, there exists $\delta_3\in(0,\min\{R_1^{\mathrm{sub}},R_2^{\mathrm{super}}\}]$ such that $\underline\varphi^\prime(r)\leq\overline\varphi^\prime(r)$ on $[0,\delta_3]$.
In both cases, $\underline\varphi^\prime\leq\overline\varphi^\prime$ on $[0,\delta_3]$. Since $\underline\varphi(0)=\overline\varphi(0)=a$, integration gives $\underline\varphi\leq\overline\varphi$ on $[0,\delta_3]$. Set
$$
R_3:=\min\{\delta_0,\delta_1,\delta_2,\delta_3\}.
$$
Then $\underline\varphi$ and $\overline\varphi$ are the required subsolution and supersolution on $[0,R_3]$.

In each of the above cases, let $R$ denote $R_1$, $R_2$, or $R_3$, respectively. Let $\phi(r):=\varphi^\prime(r)$ and $\mathbf{u}(r):=(\varphi(r),\phi(r))^T$. For fixed $r>0$ and $\phi>0$, define
$$
H_{r,\phi}(z):=\left(z+c_1\frac{\phi}{r}\right)^{\frac{k-1}{k}}\left(c_3z+c_2\frac{\phi}{r}\right)^{\frac1k}
$$
on the set where $z+c_1\frac{\phi}{r}>0$ and $c_3z+c_2\frac{\phi}{r}>0$, with the first factor and its domain restriction omitted when $k=1$.
The function $H_{r,\phi}$ is continuous and strictly increasing in $z$ on this set. Hence, its inverse with respect to $z$ is well-defined and unique. We can rewrite \eqref{sODE} as
$$
\varphi^{\prime\prime}(r)
=H_{r,\phi}^{-1}\left(C_0\left[f(\varphi)+g(\varphi)\phi^q\right]\right)
=:F(r,\varphi,\phi).
$$
For fixed $r>0$ and $\phi>0$, since $f$ and $g$ are non-decreasing, the function $F(r,\varphi,\phi)$ is non-decreasing with respect to $\varphi$. Thus, \eqref{CP} is equivalent to the first-order cooperative system
\begin{equation}\label{1order}
\mathbf{u}^\prime(r)
=
\begin{pmatrix}
\phi(r)\\
F(r,\varphi(r),\phi(r))
\end{pmatrix}
=:\mathbf{F}(r,\varphi(r),\phi(r)),
\quad r\in(0,R],
\end{equation}
with the initial condition $\mathbf{u}(0)=(a,0)^T$.
Set
$$
\underline{\mathbf{u}}:=(\underline\varphi,\underline\varphi^\prime)^T,\quad \overline{\mathbf{u}}:=(\overline\varphi,\overline\varphi^\prime)^T.
$$
By the preceding constructions,
$$
\underline{\mathbf{u}}(0)=\overline{\mathbf{u}}(0)=(a,0)^T,
\quad
\underline{\mathbf{u}}(r)\leq\overline{\mathbf{u}}(r),\quad r\in[0,R].
$$
For $\rho\in(0,R)$, define
$$
D_\rho:=\left\{(r,v,w):r\in[\rho,R],\ \underline\varphi(r)\leq v\leq\overline\varphi(r),\ \underline\varphi^\prime(r)\leq w\leq\overline\varphi^\prime(r)\right\}.
$$
Since $r\geq\rho>0$ and $\phi\geq \underline\varphi^\prime>0$ on $[\rho,R]$, the vector field $\mathbf F$ is continuous and cooperative on $D_\rho$. Applying Lemma \ref{Lem3.2} on $[\rho,R]$ with
$\mathbf u_\rho(\rho)=\underline{\mathbf u}(\rho)$, we obtain a solution $\mathbf u_\rho=(\varphi_\rho,\phi_\rho)^T$ of \eqref{1order} satisfying
$$
\underline{\mathbf u}(r)\leq\mathbf u_\rho(r)\leq\overline{\mathbf u}(r),\quad r\in[\rho,R].
$$
Since $\underline{\mathbf{u}}$ is a subsolution and $\mathbf{u}_\rho(\rho)=\underline{\mathbf{u}}(\rho)$, we have
\begin{align*}
H_{\rho,\underline\varphi^\prime(\rho)}\bigl(\underline\varphi^{\prime\prime}(\rho)\bigr)
\leq C_0\left[f(\underline\varphi(\rho))+g(\underline\varphi(\rho))	\bigl(\underline\varphi^\prime(\rho)\bigr)^q\right]=
H_{\rho,\underline\varphi^\prime(\rho)}
\bigl(\varphi_\rho^{\prime\prime}(\rho)\bigr).
\end{align*}
The strict monotonicity of $H_{\rho,\underline\varphi^\prime(\rho)}$ gives
$\varphi_\rho^{\prime\prime}(\rho)\geq\underline\varphi^{\prime\prime}(\rho)>0$.
By continuity, $\varphi_\rho^{\prime\prime}>0$ in a right neighborhood of $\rho$. Moreover, $\varphi_\rho^\prime\geq\underline\varphi^\prime>0$ on $[\rho,R]$.
We therefore obtain
$$
\varphi_\rho^{\prime\prime}(r)\geq0,
\quad r\in[\rho,R].
$$

Let $\rho_j\downarrow0$ and fix $\varepsilon\in(0,R)$. For any $\rho_j<\varepsilon$, we have $(r,\mathbf{u}_{\rho_j}(r))\in D_\varepsilon$ for $r\in[\varepsilon,R]$. Since $D_\varepsilon$ is compact and $\mathbf{F}$ is bounded on $D_\varepsilon$, the families $\{\mathbf{u}_{\rho_j}\}_{\rho_j<\varepsilon}$ and $\{\mathbf{u}_{\rho_j}^\prime\}_{\rho_j<\varepsilon}$ are uniformly bounded on $[\varepsilon,R]$. By the Arzel\`a--Ascoli theorem and a diagonal argument, a subsequence converges locally uniformly on $(0,R]$ to a function $\mathbf{u}=(\varphi,\phi)^T$. Passing to the limit in the integral form of \eqref{1order}, we obtain $\mathbf{u}\in C^1((0,R];\mathbb{R}^2)$ satisfying \eqref{1order}. In particular, $\phi=\varphi^\prime$ on $(0,R]$. Moreover,
\begin{equation}\label{sub-super-comparision}
\underline\varphi(r)\leq\varphi(r)\leq\bar\varphi(r),\quad\underline\varphi^\prime(r)\leq\varphi^\prime(r)\leq\bar\varphi^\prime(r),\quad r\in(0,R].
\end{equation}
Since each $\varphi_{\rho_j}^\prime$ is non-decreasing, the limit $\varphi^\prime$ is also non-decreasing. Therefore,
$$
\varphi^{\prime\prime}(r)\geq0,\quad r\in(0,R].
$$

Finally, we verify the regularity at the origin. By \eqref{sub-super-comparision}, $\varphi(r)\to a$ and $\varphi^\prime(r)\to0$ as $r\to0^+$. Thus, setting $\varphi(0):=a$ and $\varphi^\prime(0):=0$, we obtain $\varphi\in C^1[0,R]\cap C^2(0,R]$. Moreover, $\varphi>0$ on $[0,R]$, and $\varphi^\prime>0$ and $\varphi^{\prime\prime}\geq0$ on $(0,R]$.
Suppose first that $q\geq0$. The supersolution constructed in Case I or Case II belongs to $C^2[0,R]$ and satisfies $\overline\varphi^\prime(0)=0$. Since $0<\varphi^\prime\leq\overline\varphi^\prime$, it follows that
\begin{equation*}
\frac{\varphi^\prime(r)}r=O(1),\quad\text{as }r\to0^+.
\end{equation*}
Lemma \ref{Lem2.2}~(i) applies when $q>0$ and $f(a)>0$, and Lemma \ref{Lem2.2}~(ii) applies when $q=0$. If $0<q<1$ and $f(a)=0$, the convexity established above allows us to apply Lemma \ref{Lem2.2}~(iii). Hence $\varphi\in C^2[0,R]$ in all three cases.
If $q<0$, the positivity of $\varphi^\prime$ and the convexity of $\varphi$, together with Lemma \ref{Lem2.2}~(iv), yield $\varphi\in W^{2,\delta}[0,R]$ for $\delta\in [1,1-\frac1q)$.
This completes the proof.
\end{proof}

Next, we establish local existence for problem \eqref{CP} under Condition (B).
\begin{theorem}[Local Existence under Condition (B)]\label{Th3.1-2}
Assume that $f$ and $g$ satisfy condition (B), and let $q>0$. Then, for any $a>0$, there exists $R>0$ such that problem \eqref{CP} admits a positive solution $\varphi\in C^2[0,R]$ satisfying $\varphi^\prime(r)>0$ and $\varphi^{\prime\prime}(r)\geq0$ for all $r\in(0,R]$.
\end{theorem}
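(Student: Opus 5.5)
I would follow the scheme of Theorem \ref{Th3.1}: build an ordered pair of convex sub- and supersolutions of \eqref{CP} near the origin, then obtain a solution from the cooperative system \eqref{1order} via Lemma \ref{Lem3.2} on $[\rho,R]$ and let $\rho\downarrow0$. Under condition (B) we have $f(a)>0$, and $g(a)$ may have either sign.

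\textbf{Supersolution.} I would replace $g$ by $g^+$, which is continuous, non-negative and non-decreasing. Since $g(\varphi)(\varphi^\prime)^q\le g^+(\varphi)(\varphi^\prime)^q$, any supersolution for the pair $(f,g^+)$ is a supersolution for $(f,g)$. The pair $(f,g^+)$ satisfies condition (A) with $f(t)>0$. So the Case I construction of Theorem \ref{Th3.1} applies verbatim: take $\overline\varphi$ from \eqref{eq2.16} with $\widetilde g=f+\widehat g$, where $\widehat g$ is built from $g^+$ (or $\widehat g=f$ if $q\ge1$). The Young inequality and smallness arguments there only use $f(a)>0$ and $\overline\varphi^\prime\to0$.

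\textbf{Subsolution.} Here the sign of $g$ matters. I would take $\underline\varphi$ from \eqref{eq2.21} with $\widetilde f$ replaced by $\tfrac12 f$, with constant $\underline C_0$ as in Case I. This gives $\underline\varphi\in C^2$, $\underline\varphi^{\prime\prime}(0)>0$ and $\underline\varphi^{\prime}(r)/r\to\underline\varphi^{\prime\prime}(0)$. The ratio argument then gives the operator $\le C_0\cdot\tfrac12 f(\underline\varphi)$ near $0$. It remains to check that
$$\tfrac12 f(\underline\varphi)\le f(\underline\varphi)-g^-(\underline\varphi)(\underline\varphi^\prime)^q$$
for small $r$. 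This holds because $q>0$, $\underline\varphi^\prime\to0$, $f(\underline\varphi)\to f(a)>0$, and $g^-$ is bounded near $a$.

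\textbf{Ordering.} We have $\overline\varphi^{\prime\prime}(0)=2\bar C_0\widetilde g(a)/(1+c_1)>\underline C_0 f(a)/(2(1+c_1))=\underline\varphi^{\prime\prime}(0)$. Hence $\underline\varphi^\prime\le\overline\varphi^\prime$ and $\underline\varphi\le\overline\varphi$ on a small interval $[0,R]$.

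\textbf{Passage to a solution.} The vector field $F(r,\varphi,\phi)=H_{r,\phi}^{-1}(C_0[f(\varphi)+g(\varphi)\phi^q])$ is well defined on $D_\rho$. I expect this to be the main point to check. One needs the argument $f(v)+g(v)w^q$ to stay positive on $D_\rho$, so that $H^{-1}$ is defined, or $H$ must be extended to non-positive values. On $D_\rho$ we have $v\in[\underline\varphi,\overline\varphi]$ close to $a$ and $0<w\le\overline\varphi^\prime$ small. Shrinking $R$ so that $g^-(v)w^q\le\tfrac12 f(v)$ on the whole box keeps the right-hand side positive and continuous.

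Cooperativity also needs care: $F$ is non-decreasing in $\varphi$, since $f$ and $g$ are non-decreasing and $w^q>0$. That is all Definition \ref{Def_Cooperative} requires, because the first component $\phi$ is trivially non-decreasing in $\phi$.

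Lemma \ref{Lem3.2}, started at $\underline{\mathbf u}(\rho)$, gives $\varphi_\rho^{\prime\prime}(\rho)\ge\underline\varphi^{\prime\prime}(\rho)>0$. From this I would deduce that $\varphi_\rho^{\prime\prime}\ge0$ on $[\rho,R]$. I would argue as in Theorem \ref{Th3.1}, or invoke Lemma \ref{Lem2.3}(ii), whose proof works on $[\rho,R]$ since the monotonicity comparison only needs $f,g$ non-decreasing. The Arzel\`a--Ascoli diagonal limit then gives a solution $\varphi$ with $\varphi^\prime>0$ and $\varphi^{\prime\prime}\ge0$.

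\textbf{Regularity at the origin.} Since $\varphi^\prime\le\overline\varphi^\prime$ and $\overline\varphi\in C^2[0,R]$ with $\overline\varphi^\prime(0)=0$, we get $\varphi^\prime/r=O(1)$. Lemma \ref{Lem2.2}(i), with $q>0$ and $f(a)>0$, then yields $\varphi\in C^2[0,R]$; that lemma assumes only $f,g\in C(\mathbb R)$.
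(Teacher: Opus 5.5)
Your proposal is correct and follows the paper's proof essentially step for step. The steps match: the $g^+$-based supersolution from Case I of Theorem \ref{Th3.1}; the subsolution driven by $\frac12 f$; the ordering via $\overline\varphi''(0)>\underline\varphi''(0)$; shrinking $R$ so that $f(v)+g(v)w^q\geq\frac12 f(a)>0$ on the whole box; and the cooperative-system limit followed by Lemma \ref{Lem2.2}~(i). One point is worth making explicit: when you transplant the proof of Lemma \ref{Lem2.3}~(ii) to $[\rho,R]$, the role of $\varphi'(0)=0$ is played by $\varphi_\rho'(\rho)<\varphi_\rho'(\tau)$, which follows from $\varphi_\rho''(\rho)>0$ for $\tau$ near the first sign change of $\varphi_\rho''$.
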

\begin{proof}
Set
$$
\bar C_0:=C_0\left(\frac{2(1+c_1)}{c_3+c_2}\right)^{\frac1k},\quad \underline C_0:=C_0\left(\frac{1+c_1}{2(c_3+c_2)}\right)^{\frac1k},
$$
and let $g^+(t):=\max\{g(t),0\}$ and $g^-(t):=\max\{-g(t),0\}$.

We first construct a supersolution. Define
$$
\widehat g^+(t):=
\begin{cases}
(1-q)(g^+(t))^{\frac1{1-q}},&0<q<1,\\
f(t),&q\geq1,
\end{cases}
\quad \widetilde g(t):=f(t)+\widehat g^+(t).
$$
By Lemma \ref{Lem3.1}, there exists $R_1^{\mathrm{super}}>0$ such that
\begin{equation*}
\left\{
\begin{aligned}
\overline\varphi^\prime(r)&=2\bar C_0r^{-c_1}\int_0^r s^{c_1}\widetilde g(\overline\varphi(s))ds,\quad r>0,\\
\overline\varphi(0)&=a,
\end{aligned}
\right.
\end{equation*}
admits a positive solution $\overline\varphi\in C^2[0,R_1^{\mathrm{super}}]$ satisfying $\overline\varphi^\prime(0)=0$ and $\overline\varphi^\prime,\overline\varphi^{\prime\prime}>0$ on $(0,R_1^{\mathrm{super}}]$. Repeating the supersolution estimates in Case I of the proof of Theorem \ref{Th3.1}, with $g$ replaced by $g^+$, we obtain $R_0^{\mathrm{super}}\in(0,R_1^{\mathrm{super}}]$ such that
\begin{align*}
\left(\overline\varphi^{\prime\prime}+c_1\frac{\overline\varphi^\prime}{r}\right)^{\frac{k-1}{k}}\left(c_3\overline\varphi^{\prime\prime}+c_2\frac{\overline\varphi^\prime}{r}\right)^{\frac1k}
&\geq C_0\left[f(\overline\varphi)+g^+(\overline\varphi)(\overline\varphi^\prime)^q\right]\\
&\geq C_0\left[f(\overline\varphi)+g(\overline\varphi)(\overline\varphi^\prime)^q\right],\quad r\in(0,R_0^{\mathrm{super}}].
\end{align*}
Thus, $\overline\varphi$ is a supersolution of \eqref{CP} on $[0,R_0^{\mathrm{super}}]$.

We next construct a subsolution. By Lemma \ref{Lem3.1}, there exists $R_1^{\mathrm{sub}}>0$ such that
\begin{equation*}
\left\{
\begin{aligned}
\underline\varphi^\prime(r)&=\frac12\underline C_0r^{-c_1}\int_0^r s^{c_1}f(\underline\varphi(s))ds,\quad r>0,\\
\underline\varphi(0)&=a,
\end{aligned}
\right.
\end{equation*}
admits a positive solution $\underline\varphi\in C^2[0,R_1^{\mathrm{sub}}]$ satisfying $\underline\varphi^\prime(0)=0$ and $\underline\varphi^\prime,\underline\varphi^{\prime\prime}>0$ on $(0,R_1^{\mathrm{sub}}]$.
As in the subsolution construction in Case I of the proof of Theorem \ref{Th3.1}, there exists $R_0^{\mathrm{sub}}\in(0,R_1^{\mathrm{sub}}]$ such that
$$
\left(\underline\varphi^{\prime\prime}+c_1\frac{\underline\varphi^\prime}{r}\right)^{\frac{k-1}{k}}\left(c_3\underline\varphi^{\prime\prime}+c_2\frac{\underline\varphi^\prime}{r}\right)^{\frac1k}
\leq\frac{C_0}{2}f(\underline\varphi),\quad r\in(0,R_0^{\mathrm{sub}}].
$$
Moreover, since $q>0$ and $\lim\limits_{r\to0^+}\frac{g^-(\underline\varphi(r))(\underline\varphi^\prime(r))^q}{f(\underline\varphi(r))}=0$, there exists $R_2^{\mathrm{sub}}\in(0,R_1^{\mathrm{sub}}]$ such that
$$
g^-(\underline\varphi)(\underline\varphi^\prime)^q
\leq\frac12f(\underline\varphi),\quad r\in(0,R_2^{\mathrm{sub}}].
$$
Since $g\geq-g^-$, it follows that
$$
f(\underline\varphi)+g(\underline\varphi)(\underline\varphi^\prime)^q
\geq\frac12f(\underline\varphi),\quad r\in(0,R_2^{\mathrm{sub}}].
$$
Therefore, $\underline\varphi$ is a subsolution of \eqref{CP} on
$\left[0,\min\left\{R_0^{\mathrm{sub}},R_2^{\mathrm{sub}}\right\}\right]$.
At the origin,
$$
\overline\varphi^{\prime\prime}(0)=\frac{2\bar C_0\widetilde g(a)}{1+c_1}>\frac{\underline C_0f(a)}{2(1+c_1)}=\underline\varphi^{\prime\prime}(0).
$$
Thus, there exists $R_0\in\left(0,\min\left\{R_0^{\mathrm{super}},R_0^{\mathrm{sub}},R_2^{\mathrm{sub}}\right\}\right]$ such that $\underline\varphi^\prime\leq\overline\varphi^\prime$ and $\underline\varphi\leq\overline\varphi$ on $[0,R_0]$.

For $r\in(0,R]$ and $\underline\varphi(r)\leq v\leq\overline\varphi(r)$, $\underline\varphi^\prime(r)\leq w\leq\overline\varphi^\prime(r)$,
we have $v\geq a>0$. Since $\lim\limits_{r\to0^+}\overline\varphi^\prime(r)=0$ and $q>0$, there exists $R\in(0,R_0]$ such that
$$
g^-(a)(\overline\varphi^\prime(r))^q\leq\frac12f(a),\quad r\in(0,R].
$$
Since $f$ and $g$ are non-decreasing,
\begin{align*}
f(v)+g(v)w^q\geq f(a)-g^-(a)w^q \geq f(a)-g^-(a)(\overline\varphi^\prime(r))^q \geq\frac12f(a)>0.
\end{align*}
Hence, $\mathbf F$ defined in \eqref{1order} is continuous and cooperative in $D_\rho$.
The construction, convexity, and compactness arguments in the proof of Theorem \ref{Th3.1} then yield a positive solution satisfying $\varphi^\prime>0$ and $\varphi^{\prime\prime}\geq0$ on $(0,R]$, with
$$
\frac{\varphi^\prime(r)}r
\leq
\frac{\overline\varphi^\prime(r)}r
=O(1)
\quad\text{as }r\to0^+.
$$
Since $f(a)>0$ in condition (B), Lemma \ref{Lem2.2}~(i) yields $\varphi\in C^2[0,R]$.
This completes the proof.
\end{proof}

\begin{remark}\label{Rem3.1}
Let $\varphi$ be a local solution obtained in either Theorem \ref{Th3.1} or Theorem \ref{Th3.1-2}. The continuation theorem for continuous first-order systems gives a maximal extension on $[0,\mathcal R)$ for some $0<\mathcal R\leq+\infty$. Lemma \ref{Lem2.3} gives $\varphi^\prime>0$ and $\varphi^{\prime\prime}\geq0$ throughout this interval.

Suppose that $\mathcal R<+\infty$ and both $\varphi$ and $\varphi^\prime$ remain bounded as $r\to\mathcal R^-$. Their monotonicity gives finite limits $L_1:=\lim\limits_{r\to\mathcal R^-}\varphi(r)\geq a$ and $L_2:=\lim\limits_{r\to\mathcal R^-}\varphi^\prime(r)>0$. By convexity and \eqref{sODE},
$$
C_0\left[f(L_1)+g(L_1)L_2^q\right]\geq\left(c_1\frac{L_2}{\mathcal R}\right)^{\frac{k-1}{k}}\left(c_2\frac{L_2}{\mathcal R}\right)^{\frac1k}>0,
$$
where the first factor is omitted when $k=1$. Thus the inverse defining the vector field in \eqref{1order} remains continuous near $(\mathcal R,L_1,L_2)$ under either condition (A) or condition (B). Peano's theorem therefore gives a continuation beyond $\mathcal R$, contradicting maximality. Consequently, if $\mathcal R<+\infty$, then
\begin{equation}\label{blow-up-alternative}
\lim\limits_{r\to\mathcal R^-}\varphi(r)=+\infty\quad\text{or}\quad\lim\limits_{r\to\mathcal R^-}\varphi^\prime(r)=+\infty.
\end{equation}
\end{remark}


\section{Global Existence for the Radial Cauchy Problem}\label{Section4}
We derive two-sided derivative estimates for increasing convex radial solutions and use them to characterize global existence for \eqref{CP}. We treat conditions (A) and (B) separately.
Under condition (B), an exponentially weighted Keller–Osserman condition is required for the negative part of $g$.

\subsection{Global Existence under Condition (A) }\label{sec4.1}

We begin with the nonnegative coefficients in condition (A).

\begin{lemma}\label{Lemma4.1}
Assume that $f$ and $g$ satisfy condition (A), and that $1 \le k < n$ and $q \le 2$. Suppose that $R\in(0,+\infty]$ and $\varphi\in C^1[0,R)\cap C^2(0,R)$ is a solution of \eqref{sODE} satisfying $\varphi(0) = a > 0$, $\varphi^\prime(0) = 0$, $\varphi^\prime(r)>0$, and $\varphi^{\prime\prime}(r)\geq0$ for $r\in(0,R)$.
Then for all $r\in(0,R)$,
\begin{equation}\label{1prime_k_upper}
\varphi^{\prime}(r) \leq
\left\{
\begin{aligned}
& C_1 \left[\left(\int_a^{\varphi(r)} f(t) d t\right)^{\frac{1}{2}} + \left(\int_a^{\varphi(r)} g(t) d t\right)^{\frac{1}{2-q}} \right],&& q<2, \\
&\left(2 c_3^{-\frac{1}{k}} C_0 \int_a^{\varphi(r)} e^{2 c_3^{-\frac{1}{k}} C_0 \int_t^{\varphi(r)} g(\tau) d \tau} f(t) d t\right)^{\frac{1}{2}}, && q=2,
\end{aligned}
\right.
\end{equation}
and
\begin{equation}\label{1prime_k_lower}
\varphi^{\prime}(r) \geq
\left\{
\begin{aligned}
&C_2 \left[\left(\int_{a}^{\varphi(r)} f(t) d t\right)^{\frac{1}{2}} + \left(\int_{a}^{\varphi(r)} g(t) d t\right)^{\frac{1}{2-q}} \right],&& q<2, \\
&\left(\frac{1}{2}\right)^{n+k-2} \left( \frac{2k C_0}{n-1}\int_{\varphi(\frac{r}{2})}^{\varphi(r)} e^{\frac{2k C_0}{n-1} \int_t^{\varphi(r)} g(\tau) d\tau} f(t) dt \right)^{\frac{1}{2}},&& q=2.
\end{aligned}
\right.
\end{equation}
Here, $C_0, c_3$ are defined in \eqref{constants}, and $C_1,C_2>0$ depend only on $n$, $k$ and $q$.
\end{lemma}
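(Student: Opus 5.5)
The plan is to compare the left-hand side of \eqref{sODE} with $\varphi''$ from both sides, using only that $\varphi'>0$ and $\varphi''\geq0$ on $(0,R)$, and then integrate the resulting ODE inequalities against $\varphi'$ in the variable $t=\varphi(r)$.

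\emph{Upper bound.} Since both factors on the left of \eqref{sODE} dominate multiples of $\varphi''$, namely $\varphi''+c_1\varphi'/r\geq\varphi''$ and $c_3\varphi''+c_2\varphi'/r\geq c_3\varphi''$, we get
\[
c_3^{\frac1k}\varphi''\leq C_0\bigl[f(\varphi)+g(\varphi)(\varphi')^q\bigr].
\]
Multiplying by $2\varphi'\geq0$ and setting $v:=(\varphi')^2$ as a function of $t=\varphi$ (legitimate since $\varphi$ is strictly increasing) gives
\[
\frac{dv}{dt}\leq 2c_3^{-\frac1k}C_0\bigl[f(t)+g(t)v^{\frac q2}\bigr],\qquad v(a)=0.
\]
For $q=2$ this is a linear differential inequality; the integrating factor $e^{-\lambda_1\int^t g}$ with $\lambda_1=2c_3^{-1/k}C_0$ yields exactly the stated bound. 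For $q<2$, I compare $v$ with $w:=A(F^{1/2}+G^{1/(2-q)})^2$, where $F(t)=\int_a^t f$ and $G(t)=\int_a^t g$. Alternatively, at any $t$ either $g v^{q/2}\leq f$ or the reverse holds, so $v\leq C(F+\int_a^t g\,v^{q/2})$. Since $v$ is monotone in $t$ (because $\varphi''\geq0$), we have $\int_a^t g\,v^{q/2}\leq v(t)^{q/2}G(t)$, hence $v\leq C F+Cv^{q/2}G$. A Young-type dichotomy then gives $v\leq C\bigl(F+G^{2/(2-q)}\bigr)$ when $0\leq q<2$. For $q<0$, the bound $v^{q/2}\leq$ is reversed by monotonicity; in that case I would instead write $\frac{d}{dt}v^{(2-q)/2}=\frac{2-q}{2}v^{-q/2}v'\leq C(f v^{-q/2}+g)$. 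The $g$ term integrates to $CG$, while the $f$ term is handled by splitting according to which of the two terms dominates. This gives the $q<2$ line of \eqref{1prime_k_upper}.

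\emph{Lower bound.} This is the main obstacle, because the left-hand side also contains $\varphi'/r$, which must be controlled by $\varphi''$. By convexity, $\varphi'(r)/r\leq\varphi''$ is false in general, but $\varphi'(r)=\int_0^r\varphi''$ averages it. So I would bound the left side from above by $C(\varphi''+\varphi'/r)$, using homogeneity and the AM--GM inequality. Multiplying by $r^{n-1}$ gives $(r^{n-1}\varphi')'\geq \frac{kC_0}{n-1}\cdot(\text{rhs})\,r^{n-1}$ up to constants of this type; indeed $\frac{2kC_0}{n-1}$ in the statement suggests an AM--GM estimate of the form $(x)^{(k-1)/k}(y)^{1/k}\leq$ a weighted sum matched so that the coefficient of $\varphi'/r$ is $(n-1)$ times that of $\varphi''$. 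Integrating from $r/2$ to $r$ and using monotonicity of $\varphi'$ and $f$, $g$, I get
\[
r^{n-1}\varphi'(r)\geq c\int_{r/2}^r s^{n-1}h\,ds\geq c\,2^{-(n-1)}r^{n-1}\int_{r/2}^r h\,ds,
\]
so that $\varphi'(r)\varphi'(r)\gtrsim\int_{r/2}^r h\,\varphi'\,ds=\int_{\varphi(r/2)}^{\varphi(r)}(f+gv^{q/2})\,dt$. For $q=2$, repeating this in differential form on $[r/2,r]$ with the integrating factor produces the exponential weight and the factor $2^{-(n+k-2)}$. For $q<2$, I would also have to replace $\varphi(r/2)$ by $a$; I would do this by iteration over dyadic radii, or by using $\varphi'(r)\geq\varphi'(s)$ for $s\leq r$ together with a direct integration from $0$ (which is possible since the $s^{n-1}$ weight is integrable). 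Combined with $v^{q/2}$ bounds of the appropriate sign, this yields $v\geq c(F+G^{2/(2-q)})$, by the same dichotomy as in the upper bound.
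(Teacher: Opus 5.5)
Your upper bound is essentially the paper's argument: drop the $\varphi'/r$ terms to get $c_3^{1/k}\varphi''\le C_0[f+g(\varphi')^q]$, integrate against $\varphi'$ (or $(\varphi')^{1-q}$ when $q<0$), pull out a power of $\varphi'(r)$ by monotonicity, and finish with Young; for $q=2$ use an integrating factor. Your AM--GM reduction to $\varphi''+(n+k-2)\varphi'/r\ge\frac{kC_0}{n-1}[f+g(\varphi')^q]$ is also the right start for the lower bound.

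\textbf{The gap} is in the $g$-part of the lower bound for $0<q<2$. You integrate the first-order inequality $(r^{n-1}\varphi')'\gtrsim r^{n-1}h$ first and multiply by $\varphi'(r)$ only afterwards, so you end with $v(T)\gtrsim\int g(t)\,v(t)^{q/2}\,dt$. To extract $v(T)^{(2-q)/2}\gtrsim\int g$ from this, you need $v(t)^{q/2}\gtrsim v(T)^{q/2}$ for $t\le T$. Monotonicity of $\varphi'$ gives this only when $q\le 0$. For $q>0$ it gives the reverse inequality, which is exactly the one your upper bound used, so ``the same dichotomy as in the upper bound'' cannot produce a lower bound. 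Multiplying by $\varphi'(r)^{1-q}$ instead repairs $0<q\le1$. For $1<q<2$, however, the comparison $\varphi'(r)^{1-q}\ge\varphi'(s)^{1-q}$ also reverses, and that range is covered by the lemma.

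\textbf{The fix} is the move you already make for $q=2$. Multiply the pointwise inequality $\varphi''+\gamma\varphi'/r\ge\frac{kC_0}{n-1}g(\varphi)(\varphi')^q$, with $\gamma=n+k-2$, by $(2-q)r^{\gamma(2-q)}(\varphi')^{1-q}$ before integrating. This gives
\[
\bigl(r^{\gamma(2-q)}(\varphi')^{2-q}\bigr)'\geq(2-q)\frac{kC_0}{n-1}\,r^{\gamma(2-q)}\,g(\varphi)\,\varphi'.
\]
Then integrate over $[r/2,r]$ and drop the boundary term at $r/2$. This works uniformly for every $q<2$. Alternatively, within your framework you could integrate from $0$ and remove the weight using monotonicity of the integrand, obtaining $v(T)\gtrsim V(T):=\int_a^T g\,v^{q/2}\,dt$. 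A Bihari-type argument on $V'\ge c\,g\,V^{q/2}$ then closes the estimate.

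\textbf{A smaller point} concerns replacing $\varphi(r/2)$ by $a$. Dyadic iteration does not close on its own, since summing yields $\sum_j v(2^{-j}r)$ rather than $v(r)$. The paper instead uses convexity once: $\varphi(r/2)-a\le\varphi(r)-\varphi(r/2)$. Hence, for a non-decreasing integrand, $\int_a^{\varphi(r)}\le 2\int_{\varphi(r/2)}^{\varphi(r)}$. Your integrate-from-$0$ variant also works, but the reason is monotonicity of the integrand, not integrability of the weight $s^{n-1}$.
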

\begin{proof}
We first prove  \eqref{1prime_k_upper}.
Since $c_1 \frac{\varphi^{\prime}}{r}\ge0$, $c_2 \frac{\varphi^{\prime}}{r}>0$ and $c_3>0$, then \eqref{sODE} yields
\begin{equation}\label{2prime_upper}
\varphi^{\prime\prime}(r) \le c_3^{-\frac1k}C_0 \left[ f(\varphi(r)) + g(\varphi(r))(\varphi^{\prime}(r))^q \right], \quad r\in(0,R).
\end{equation}

{\bfseries\boldmath Case 1: $q=0$.}
Multiplying \eqref{2prime_upper} by $\varphi^\prime(r)$ and integrating over $[0,r]$, we obtain
$$
\varphi^\prime(r)\leq2\left(c_3^{-\frac1k}C_0\right)^{\frac12}\left[\left(\int_a^{\varphi(r)}f(t)dt\right)^{\frac12}+\left(\int_a^{\varphi(r)}g(t)dt\right)^{\frac12}\right].
$$
This proves \eqref{1prime_k_upper} for $q=0$.

{\bfseries\boldmath Case 2: $0<q < 2$.}
Multiplying \eqref{2prime_upper} by $\varphi^\prime(r)$ and integrating from $0$ to $r$, we obtain
\begin{align*}
\frac{1}{2}(\varphi^{\prime}(r))^2
&\le c_3^{-\frac{1}{k}}C_0 \int_a^{\varphi(r)} f(t) dt + c_3^{-\frac{1}{k}}C_0 \int_0^r g(\varphi(s)) (\varphi^\prime(s))^q \varphi^\prime(s) ds\\
&\le c_3^{-\frac{1}{k}}C_0 \int_a^{\varphi(r)} f(t) dt + c_3^{-\frac{1}{k}}C_0 (\varphi^{\prime}(r))^q \int_a^{\varphi(r)} g(t) dt,
\end{align*}
where we used the monotonicity of $\varphi^\prime$, $\varphi(0)=a$, and $\varphi^\prime(0) = 0$.
Applying the $\varepsilon$-Young's inequality with $\varepsilon=\frac{1}{2q}$ to the second term on the right-hand side, we obtain
\begin{align*}
(\varphi^{\prime}(r))^2
\leq 4 c_3^{-\frac{1}{k}}C_0 \int_a^{\varphi(r)} f(t) dt+2(2-q) (2q)^{\frac{q}{2-q}} (c_3^{-\frac{1}{k}}C_0)^{\frac{2}{2-q}} \left( \int_a^{\varphi(r)} g(t) dt \right)^{\frac{2}{2-q}}.
\end{align*}
Taking square roots yields
\begin{align*}
\varphi^{\prime}(r)
&\leq 2(c_3^{-\frac{1}{k}}C_0)^{\frac{1}{2}} \left( \int_a^{\varphi(r)} f(t) dt \right)^{\frac{1}{2}}+\sqrt{2(2-q)} (2q)^{\frac{q}{2(2-q)}} (c_3^{-\frac{1}{k}}C_0)^{\frac{1}{2-q}} \left( \int_a^{\varphi(r)} g(t) dt \right)^{\frac{1}{2-q}}\\
&\leq \max \left\{ 2, \sqrt{2(2-q)} (2q)^{\frac{q}{2(2-q)}} \right\}  (c_3^{-\frac{1}{k}}C_0)^{\frac{1}{2}} \left[\left(\int_a^{\varphi(r)} f(t) d t\right)^{\frac{1}{2}} + \left(\int_a^{\varphi(r)} g(t) d t\right)^{\frac{1}{2-q}} \right],
\end{align*}
which gives \eqref{1prime_k_upper} for $0<q<2$.

{\bfseries\boldmath Case 3: $q < 0$.}
Multiplying \eqref{2prime_upper} by $(\varphi^\prime(r))^{1-q}$ and integrating from $0$ to $r$, we get
\begin{align*}
\frac{1}{2-q}(\varphi^{\prime}(r))^{2-q}
&\le c_3^{-\frac{1}{k}}C_0 \int_0^r f(\varphi(s)) (\varphi^\prime(s))^{-q}\varphi^\prime(s) ds + c_3^{-\frac{1}{k}}C_0 \int_a^{\varphi(r)} g(t) dt\\
&\leq c_3^{-\frac{1}{k}}C_0 (\varphi^\prime(r))^{-q} \int_a^{\varphi(r)} f(t) dt + c_3^{-\frac{1}{k}}C_0 \int_a^{\varphi(r)} g(t) dt,
\end{align*}
where we used $\varphi^\prime(0) = 0$ and the fact that $(\varphi^\prime(r))^{-q}$ is increasing.
Applying the $\varepsilon$-Young’s inequality with $\varepsilon = (-2q)^{\frac{q}{2-q}}$ to the first term on the right-hand side, we obtain
\begin{align*}
(\varphi^{\prime}(r))^{2-q}
\leq 4 (-2q)^{-\frac{q}{2}} \left( c_3^{-\frac{1}{k}}C_0 \int_a^{\varphi(r)} f(t) dt \right)^{\frac{2-q}{2}}+2(2-q) c_3^{-\frac{1}{k}}C_0 \int_a^{\varphi(r)} g(t) dt.
\end{align*}
Raising both sides to the power of $\frac{1}{2-q}\in (0,1)$, we deduce
\begin{align*}
\varphi^\prime(r)
\leq& \left(4(-2q)^{-\frac q2}\right)^{\frac1{2-q}}\left( c_3^{-\frac{1}{k}}C_0 \int_a^{\varphi(r)}f(t)dt\right)^{\frac12}+\left(2(2-q)\right)^{\frac1{2-q}}\left( c_3^{-\frac{1}{k}}C_0 \int_a^{\varphi(r)}g(t)dt\right)^{\frac1{2-q}}\\
\leq& \max \left\{ \left( 4 (-2q)^{-\frac{q}{2}} \right)^{\frac{1}{2-q}}, \left( 2(2-q) \right)^{\frac{1}{2-q}} \right\}  ( c_3^{-\frac{1}{k}}C_0)^{\frac{1}{2-q}}
\left[\left(\int_a^{\varphi(r)} f(t) d t\right)^{\frac{1}{2}} + \left(\int_a^{\varphi(r)} g(t) d t\right)^{\frac{1}{2-q}} \right],
\end{align*}
which gives \eqref{1prime_k_upper} for $q<0$.

{\bfseries\boldmath Case 4: $q = 2$.}
Multiplying \eqref{2prime_upper} by $2 e^{-2 c_3^{-\frac{1}{k}} C_0 \int_a^{\varphi(r)} g(\tau) d \tau} \varphi^{\prime}(r)$, we get
\begin{equation*}
\left( (\varphi^{\prime}(r))^2 e^{-2 c_3^{-\frac{1}{k}} C_0 \int_a^{\varphi(r)} g(\tau) d \tau} \right)^{\prime} \le 2 c_3^{-\frac{1}{k}} C_0 e^{-2 c_3^{-\frac{1}{k}} C_0 \int_a^{\varphi(r)} g(\tau) d \tau} \varphi^{\prime}(r) f(\varphi(r)).
\end{equation*}
Integrating from $0$ to $r$ and using $\varphi^{\prime}(0) = 0$ yields
\begin{equation*}
(\varphi^{\prime}(r))^2 e^{-2 c_3^{-\frac{1}{k}} C_0 \int_a^{\varphi(r)} g(\tau) d \tau} \le 2 c_3^{-\frac{1}{k}} C_0 \int_0^r e^{-2 c_3^{-\frac{1}{k}} C_0 \int_a^{\varphi(s)} g(\tau) d \tau} \varphi^{\prime}(s) f(\varphi(s)) ds.
\end{equation*}
Multiplying both sides by $e^{2 c_3^{-\frac{1}{k}} C_0 \int_a^{\varphi(r)} g(\tau) d \tau}$, we obtain
\begin{align*}
(\varphi^{\prime}(r))^2
\leq 2 c_3^{-\frac{1}{k}} C_0 \int_0^r e^{2 c_3^{-\frac{1}{k}} C_0 \int_{\varphi(s)}^{\varphi(r)} g(\tau) d \tau }\varphi^{\prime}(s) f(\varphi(s)) ds
= 2 c_3^{-\frac{1}{k}} C_0 \int_a^{\varphi(r)} e^{2 c_3^{-\frac{1}{k}} C_0 \int_t^{\varphi(r)} g(\tau) d \tau} f(t) dt.
\end{align*}
Taking the square root gives \eqref{1prime_k_upper} for $q=2$.

Next, we prove  \eqref{1prime_k_lower}. Since $\varphi^{\prime}>0$, $\varphi^{\prime\prime}\geq0$, $c_3\geq 1$, and $c_2>c_1\geq 0$, applying Young's inequality to the left-hand side of \eqref{sODE} gives
\begin{align*}
\left[\varphi^{\prime\prime} + c_1 \frac{\varphi^{\prime}}{r}\right]^{\frac{k-1}{k}} \left[c_3 \varphi^{\prime\prime} + c_2 \frac{\varphi^{\prime}}{r}\right]^{\frac{1}{k}}
&\le \frac{k-1}{k} \left(\varphi^{\prime\prime} + c_1 \frac{\varphi^{\prime}}{r}\right) + \frac{1}{k} \left(c_3 \varphi^{\prime\prime} + c_2 \frac{\varphi^{\prime}}{r}\right)\\
&=\frac{n-1}{k}\varphi^{\prime\prime} +\frac{(n-1)(n+k-2)}{k}\frac{\varphi^{\prime}}{r}
,\quad r\in(0,R).
\end{align*}
Set $\gamma := n+k-2\geq 1$. Substituting this inequality into \eqref{sODE}, we obtain
\begin{equation}\label{2prime_lower_qneg}
\varphi^{\prime\prime}(r) +\gamma \frac{\varphi^{\prime}(r)}{r} \ge \frac{k C_0}{n-1} \left[ f(\varphi(r)) + g(\varphi(r)) (\varphi^\prime(r))^q \right], \quad r\in(0,R).
\end{equation}
Since both terms on the right-hand side are nonnegative, \eqref{2prime_lower_qneg} implies the following two separate inequalities:
\begin{align}
\varphi^{\prime\prime}(r) + \gamma \frac{\varphi^\prime(r)}{r} &\ge \frac{k C_0}{n-1}f(\varphi(r)), \quad r\in(0,R),\label{eq3.3_qneg}\\
\varphi^{\prime\prime}(r) + \gamma \frac{\varphi^\prime(r)}{r} &\ge \frac{k C_0}{n-1}g(\varphi(r)) (\varphi^\prime(r))^q, \quad r\in(0,R). \label{3.3_qneg}
\end{align}
We divide the subsequent argument into two cases.

{\bfseries\boldmath Case 1: $q < 2$.}
Multiplying \eqref{eq3.3_qneg} by $2 r^{2\gamma} \varphi^\prime(r)$ and integrating over $[\frac{r}{2},r]$, we have
\begin{align*}
r^{2\gamma} (\varphi^\prime(r))^2
&\ge \left(\frac{r}{2}\right)^{2\gamma} \left(\varphi^\prime\left(\frac{r}{2}\right)\right)^2 + \frac{2k C_0}{n-1}\left(\frac{r}{2}\right)^{2\gamma} \int_{\frac{r}{2}}^r f(\varphi(s)) \varphi^{\prime}(s) ds \\
&\ge \frac{2k C_0}{n-1} \left(\frac{1}{2}\right)^{2\gamma} r^{2\gamma} \int_{\varphi(\frac{r}{2})}^{\varphi(r)} f(t) dt.
\end{align*}
Dividing both sides by $r^{2\gamma}$ and taking the square root gives
\begin{equation}\label{lower_f}
\varphi^\prime(r) \ge \sqrt{\frac{2k C_0}{n-1}} \left(\frac{1}{2}\right)^{\gamma} \left( \int_{\varphi(\frac{r}{2})}^{\varphi(r)} f(t) dt \right)^{\frac{1}{2}}.
\end{equation}
Using the fact that $\varphi^\prime$ and $f$ are non-decreasing, we have
\begin{align*}
\int_a^{\varphi(r)} f(t) dt= \int_a^{\varphi(\frac{r}{2})} f(t) dt + \int_{\varphi(\frac{r}{2})}^{\varphi(r)} f(t) dt\leq 2 \int_{\varphi(\frac{r}{2})}^{\varphi(r)} f(t) dt,\quad r\in(0,R).
\end{align*}
Substituting this into \eqref{lower_f}, we get
\begin{equation}\label{lower_f_}
\varphi^\prime(r) \ge \sqrt{\frac{2k C_0}{n-1}} \left(\frac{1}{2}\right)^{\gamma+\frac{1}{2}} \left( \int_{a}^{\varphi(r)} f(t) dt \right)^{\frac{1}{2}}.
\end{equation}
Similarly, multiplying \eqref{3.3_qneg} by $(2-q) r^{\gamma(2-q)} (\varphi^\prime(r))^{1-q}$ and integrating over $[\frac{r}{2},r]$, we have
\begin{align*}
r^{\gamma(2-q)} (\varphi^\prime(r))^{2-q}
&\ge \left(\frac{r}{2}\right)^{\gamma(2-q)} \left(\varphi^\prime\left(\frac{r}{2}\right)\right)^{2-q}+(2-q)\frac{k C_0}{n-1}\left(\frac{r}{2}\right)^{\gamma(2-q)} \int_{\varphi(\frac{r}{2})}^{\varphi(r)} g(t) dt \\
&\ge (2-q)\frac{k C_0}{n-1} \left(\frac{1}{2}\right)^{\gamma(2-q)} r^{\gamma(2-q)} \int_{\varphi(\frac{r}{2})}^{\varphi(r)} g(t) dt.
\end{align*}
Raising both sides to the positive power $\frac1{2-q}$ and using the monotonicity of $g$, we obtain
\begin{equation}\label{lower_g}
\varphi^\prime(r) \geq \left( (2-q)\frac{k C_0}{n-1}\right)^{\frac{1}{2-q}} \left(\frac{1}{2}\right)^{\gamma+\frac{1}{2-q}} \left( \int_{a}^{\varphi(r)} g(t) dt \right)^{\frac{1}{2-q}}.
\end{equation}
Combining \eqref{lower_f_} and \eqref{lower_g}, we get \eqref{1prime_k_lower} with
\begin{equation*}
C_2:=  \min\left\{ \left(\frac{k C_0}{n-1}\right)^{\frac{1}{2}}, \left( \frac{k C_0(2-q)}{2(n-1)}\right)^{\frac{1}{2-q}} \right\} \left(\frac{1}{2}\right)^{n+k-1}, \quad q<2.
\end{equation*}

{\bfseries\boldmath Case 2: $q = 2$.}
Rewrite \eqref{2prime_lower_qneg} with $q=2$ as
\begin{equation}\label{nevprime_lower}
\varphi^{\prime\prime}(r) + \gamma \frac{\varphi^\prime(r)}{r} - \frac{k C_0}{n-1}g(\varphi(r)) (\varphi^\prime(r))^2 \ge \frac{k C_0}{n-1}f(\varphi(r)), \quad r\in(0,R).
\end{equation}
Multiplying \eqref{nevprime_lower} by $2 r^{2\gamma} e^{- \frac{2k C_0}{n-1}\int_{a}^{\varphi(r)} g(\tau) d\tau} \varphi^\prime(r)$ yields
\begin{equation*}
\left( r^{2\gamma} e^{-\frac{2k C_0}{n-1}\int_{a}^{\varphi(r)} g(\tau) d\tau} (\varphi^\prime(r))^2 \right)^\prime \ge \frac{2k C_0}{n-1}r^{2\gamma} e^{-\frac{2k C_0}{n-1}\int_{a}^{\varphi(r)} g(\tau) d\tau} f(\varphi(r)) \varphi^\prime(r).
\end{equation*}
Integrating over $[\frac{r}{2},r]$, and using $s^{2\gamma} \ge (\frac{r}{2})^{2\gamma}$ for $s \in [\frac{r}{2},r]$, we obtain
\begin{align*}
r^{2\gamma} e^{-\frac{2k C_0}{n-1}\int_{a}^{\varphi(r)} g(\tau) d\tau} (\varphi^\prime(r))^2
\ge \frac{2k C_0}{n-1}\left(\frac{r}{2}\right)^{2\gamma} \int_{\varphi(\frac{r}{2})}^{\varphi(r)}  e^{-\frac{2k C_0}{n-1}\int_{a}^{t} g(\tau) d\tau} f(t)  dt.
\end{align*}
Multiplying both sides by $e^{\frac{2k C_0}{n-1}\int_{a}^{\varphi(r)} g(\tau) d\tau}$ and taking square roots yields \eqref{1prime_k_lower} for $q=2$.
\end{proof}

With the upper and lower derivative estimates established in Lemma \ref{Lemma4.1}, we now characterize the global solvability of \eqref{CP} under condition (A)  in terms of the Keller--Osserman condition \eqref{KO_1}.

\begin{theorem}[Global Existence under Condition (A)]\label{Th4.1}
Assume that $f$ and $g$ satisfy condition (A), and suppose further that either $f(t)>0$ for all $t>0$, or $q<1$.
Let $1 \le k < n$ and $q<2$. For any initial value $a>0$, the Cauchy problem \eqref{CP} admits a positive global solution $\varphi \in C^1[0,+\infty) \cap C^2(0,+\infty)$ satisfying $\varphi^\prime(r)>0$ and $\varphi^{\prime\prime}(r)\geq0$ for all $r>0$
if and only if condition \eqref{KO_1} holds.
Furthermore,
\begin{itemize}
\item[(i)] if $0 \le q < 2$, then $\varphi \in C^2[0,+\infty)$;
\item[(ii)] if $q<0$, then $\varphi \in W_{loc}^{2,\delta}[0,+\infty)$ for $\delta \in [1,1 - \frac{1}{q})$.
\end{itemize}
\end{theorem}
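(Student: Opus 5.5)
The plan combines the local existence result (Theorem \ref{Th3.1}), the continuation alternative of Remark \ref{Rem3.1}, and the two-sided derivative bounds of Lemma \ref{Lemma4.1}. Fix $a>0$ and let $\varphi$ be the local solution of Theorem \ref{Th3.1}. We extend it to a maximal interval $[0,\mathcal R)$. By Lemma \ref{Lem2.3} and the remark following it (which covers the degenerate case $0<q<1$, $f(a)=0$), we have $\varphi^\prime>0$ and $\varphi^{\prime\prime}\ge0$ throughout $[0,\mathcal R)$. Write
$$
\Lambda(s):=\Bigl(\int_a^s f\Bigr)^{\frac12}+\Bigl(\int_a^s g\Bigr)^{\frac1{2-q}}.
$$
First observe that \eqref{KO_1} is equivalent to $\int_{a+1}^{+\infty}dt/\Lambda(t)=+\infty$. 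The lower limit $0$ versus $a$ changes $\int_0^t f$ only by a bounded amount. Since $\int_a^t f\ge c(t-a-1)$ for large $t$ (because $f+g>0$ and both are non-decreasing, at least one of the two integrals grows at least linearly), the two denominators are comparable up to constants for large $t$. This is a routine check.

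\emph{Sufficiency.} Assume \eqref{KO_1} and suppose $\mathcal R<+\infty$. The upper bound \eqref{1prime_k_upper} gives $\varphi^\prime(r)\le C_1\Lambda(\varphi(r))$. Hence, for small $r_1>0$,
$$
\int_{\varphi(r_1)}^{\varphi(r)}\frac{dt}{\Lambda(t)}\le C_1(r-r_1)\le C_1\mathcal R .
$$
Divergence of the integral forces $\varphi$ to stay bounded on $[0,\mathcal R)$. The same upper bound then bounds $\varphi^\prime$, which contradicts \eqref{blow-up-alternative}. So $\mathcal R=+\infty$. The regularity statements (i) and (ii) follow from Theorem \ref{Th3.1}: regularity at the origin is local, and away from the origin $\varphi\in C^2$. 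For (ii) we use Lemma \ref{Lem2.2}(iv) near $0$, together with the local boundedness of $\varphi^{\prime\prime}$ on compact subsets of $(0,+\infty)$.

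\emph{Necessity.} Suppose a global solution with $\varphi^\prime>0$ and $\varphi^{\prime\prime}\ge0$ exists, and \eqref{KO_1} fails. By convexity, $\varphi(r)\ge\varphi(1)+\varphi^\prime(1)(r-1)\to+\infty$. The lower bound \eqref{1prime_k_lower} gives $\varphi^\prime\ge C_2\Lambda(\varphi)$. Integrating from $r_1$ to $r$ gives
$$
C_2(r-r_1)\le\int_{\varphi(r_1)}^{\varphi(r)}\frac{dt}{\Lambda(t)}\le\int_{\varphi(r_1)}^{+\infty}\frac{dt}{\Lambda(t)}<+\infty
$$
for every $r$. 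This is a contradiction as $r\to\infty$.

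\emph{Main obstacle.} The delicate step is the lower limit of integration near $t=a$. There $\Lambda$ vanishes, and $1/\Lambda$ may fail to be integrable at $a$ (for instance in the degenerate case $f(a)=0$). I will avoid this by always integrating from $\varphi(r_1)>a$ with $r_1>0$ fixed, which is legitimate because $\varphi$ is strictly increasing. Then only the tail behaviour of $1/\Lambda$ matters, and that is governed by \eqref{KO_1} through the equivalence noted above.
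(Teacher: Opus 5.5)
Your proposal is correct and follows essentially the same route as the paper. Both take the local solution of Theorem \ref{Th3.1}, use the continuation alternative of Remark \ref{Rem3.1}, integrate the upper bound \eqref{1prime_k_upper} for sufficiency and the lower bound \eqref{1prime_k_lower} for necessity, and read regularity off Theorem \ref{Th3.1}. The only difference is in moving the lower limit between $a$ and $0$. The paper uses $\int_0^t\geq\int_a^t$ for sufficiency, and $\int_a^t f\geq\int_0^{t/2}f$ for $t\geq2a$ plus a change of variables for necessity. You instead prove that the two denominators are comparable for large $t$. That is valid, but only termwise: each of $\int_a^t f$ and $\int_a^t g$ either vanishes identically or grows linearly. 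So your claim ``$\int_a^t f\geq c(t-a-1)$'' should be stated for whichever of $f,g$ is nontrivial, since $f\equiv0$ is allowed when $q<1$.
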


\begin{proof}
\underline{\textit{Sufficiency.}} Assume that \eqref{KO_1} holds and fix $a>0$. By Theorem \ref{Th3.1} and Remark \ref{Rem3.1}, let $\varphi$ be a positive increasing convex solution of \eqref{CP} on its maximal interval $[0,R)$, where $0<R\leq+\infty$. Suppose that $R<+\infty$. The estimate \eqref{1prime_k_upper} shows that $\varphi^\prime$ is bounded whenever $\varphi$ is bounded on $[0,R)$. Thus \eqref{blow-up-alternative} gives $\varphi(r)\to+\infty$ as $r\to R^-$. Since $f,g\geq0$, integrating \eqref{1prime_k_upper} and letting $r\to R^-$ yield
$$
\int_a^{+\infty}\frac{dt}{\left(\int_0^t f(s)ds\right)^{\frac12}+\left(\int_0^t g(s)ds\right)^{\frac1{2-q}}}\leq C_1R<+\infty,
$$
which contradicts \eqref{KO_1}. Consequently,  \eqref{CP} admits a positive global solution for any $a>0$.

\underline{\textit{Necessity.}} Suppose on the contrary that there exists $A_0>0$ such that
\begin{equation*}
\int_{A_0}^{+\infty}\frac{dt}{\left(\int_0^t f(s)ds\right)^{\frac12}+\left(\int_0^t g(s)ds\right)^{\frac1{2-q}}}<+\infty.
\end{equation*}
Since $\varphi^\prime$ is positive and non-decreasing, $\varphi(r)\to+\infty$ as $r\to+\infty$. Choose $r_1>0$ such that $A:=\varphi(r_1)>2\max\{a,A_0\}$. For $t\geq A$, the monotonicity of $f$ gives $\int_a^t f(s)ds\geq\int_{\frac t2}^t f(s)ds\geq\int_0^{\frac t2}f(s)ds$, and the same comparison holds for $g$. Integrating \eqref{1prime_k_lower} and changing variables therefore yield
\begin{equation*}
\begin{aligned}
C_2(r-r_1)&\leq\int_A^{\varphi(r)}\frac{dt}{\left(\int_a^t f(s)ds\right)^{\frac12}+\left(\int_a^t g(s)ds\right)^{\frac1{2-q}}}\\
&\leq2\int_{\frac A2}^{+\infty}\frac{dt}{\left(\int_0^t f(s)ds\right)^{\frac12}+\left(\int_0^t g(s)ds\right)^{\frac1{2-q}}}<+\infty,\quad r>r_1.
\end{aligned}
\end{equation*}
The right-hand side is independent of $r$. Letting $r\to+\infty$ gives a contradiction and proves the necessity of \eqref{KO_1}.

Finally, the regularity properties in (i) and (ii) at the origin follow directly from Theorem \ref{Th3.1}.
\end{proof}

\subsection{Global Existence under Condition (B)}\label{sec4.2}
In this subsection, we consider condition (B), where the function $g$ is allowed to change sign.
In this setting, the negative part of $g$ requires additional weighted estimates.

\begin{lemma}\label{Lemma4.1-2}
Assume that $f$ and $g$ satisfy condition (B), and that $1\leq k<n$ and $0<q\leq2$.
Suppose that $R\in(0,+\infty]$ and $\varphi\in C^2[0,R)$ is a solution of \eqref{sODE} satisfying $\varphi(0) = a>0$, $\varphi^\prime(0) = 0$, $\varphi^\prime(r)>0$, and $\varphi^{\prime\prime}(r)\geq0$ for $r\in(0,R)$.
Then for all $r\in(0,R)$,
\begin{equation}\label{1prime_k_upper_2}
\varphi^{\prime}(r) \leq
\left\{
\begin{aligned}
& C_1 \left[\left(\int_a^{\varphi(r)} f(t) d t\right)^{\frac{1}{2}} + \left(\int_a^{\varphi(r)} g^+(t) d t\right)^{\frac{1}{2-q}} \right],&& 0<q<2, \\
&\left(2 c_3^{-\frac{1}{k}} C_0 \int_a^{\varphi(r)}  f(t)e^{2 c_3^{-\frac{1}{k}} C_0 \int_t^{\varphi(r)} g^+(\tau) d \tau} d t\right)^{\frac{1}{2}}, && q=2.
\end{aligned}
\right.
\end{equation}
If we further assume $g(a)\geq 0$, then for all $r\in(0,R)$,
\begin{equation}\label{1prime_k_lower_2}
\varphi^{\prime}(r) \geq
\left\{
\begin{aligned}
&C_2 \left[\left(\int_{a}^{\varphi(r)} f(t) d t\right)^{\frac{1}{2}} + \left(\int_{a}^{\varphi(r)} g^+(t) d t\right)^{\frac{1}{2-q}} \right],&& 0<q<2, \\
&\left(\frac{1}{2}\right)^{n+k-2} \left( \frac{2k C_0}{n-1}\int_{\varphi(\frac{r}{2})}^{\varphi(r)}  f(t)e^{\frac{2k C_0}{n-1} \int_t^{\varphi(r)} g^+(\tau) d\tau}  dt \right)^{\frac{1}{2}},&& q=2.
\end{aligned}
\right.
\end{equation}
Moreover, for $0 < q \le 2$,
\begin{equation}\label{1prime_k_lower_uni}
\varphi^\prime(r)\geq \left(\frac{1}{2}\right)^{n+k-2}\left( \frac{q k C_0}{2(n-1)} \int_{a}^{\varphi(r)} f(t) e^{- \frac{qk C_0}{n-1} \int_{t}^{\varphi(r)} P(\tau)d\tau} dt\right)^{\frac{1}{2}},\quad r\in(0,R).
\end{equation}
If we further assume $g(t)\leq 0$ for all $t\in\mathbb{R}$, for $0 < q \le 2$, we have
\begin{equation}\label{1prime_k_upper_uni}
\varphi^\prime(r)\leq \left(2 c_3^{-\frac1k}C_0  \int_a^{\varphi(r)}  f(t) e^{-2 c_3^{-\frac1k}C_0 \int_t^{\varphi(r)} P(\tau) d\tau} dt\right)^{\frac{1}{2}},\quad r\in(0,R),
\end{equation}
where $P(t) :=  \frac{(g^-(t))^{\frac{2}{q}}}{(f(t))^{\frac{2}{q}-1}}$ for $t>0$. Here, $C_0, c_3$ are defined in \eqref{constants}, and $C_1,C_2>0$ depend only on $n$, $k$ and $q$.
\end{lemma}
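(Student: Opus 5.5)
The plan is to follow the proof of Lemma \ref{Lemma4.1}, splitting $g=g^+-g^-$ and using the two one-sided differential inequalities obtained from \eqref{sODE}. First, since $\varphi^\prime>0$ and $\varphi^{\prime\prime}\geq0$, the same reasoning as in \eqref{2prime_upper} gives
$$
\varphi^{\prime\prime}\leq c_3^{-\frac1k}C_0\left[f(\varphi)+g(\varphi)(\varphi^\prime)^q\right]\leq c_3^{-\frac1k}C_0\left[f(\varphi)+g^+(\varphi)(\varphi^\prime)^q\right].
$$
The Young-inequality argument behaves as follows:
\begin{itemize}
\item For $0<q<2$, repeat Case 2 of the proof of Lemma \ref{Lemma4.1} with $g$ replaced by $g^+$. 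This is legitimate because $g^+$ is nonnegative and non-decreasing, and it yields the first line of \eqref{1prime_k_upper_2} with the same $C_1$.
\item For $q=2$, the integrating-factor computation of Case 4 with $g^+$ gives the second line.
\end{itemize}
For \eqref{1prime_k_upper_uni}, where $g\leq0$, keep $g=-g^-$ in this inequality. Then bound $-g^-(\varphi)(\varphi^\prime)^q$ from above by $-P(\varphi)(\varphi^\prime)^2+\text{(multiple of }f(\varphi))$ via Young's inequality. Concretely, $f^{1-\frac q2}\cdot(\ldots)$ gives $g^-w^q\geq \alpha P w^2-\beta f$ for suitable constants when $q<2$, with equality structure when $q=2$. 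After this step the inequality reads $\varphi^{\prime\prime}\leq C(f(\varphi)-P(\varphi)(\varphi^\prime)^2)$. Multiplying by $2\varphi^\prime e^{2c_3^{-1/k}C_0\int_a^{\varphi}P}$ and integrating from $0$ to $r$ gives the exponentially weighted bound. I expect the constants to need care so that the weight exponent is exactly $2c_3^{-1/k}C_0$ while the factor in front is $2c_3^{-1/k}C_0$. If the sharp Young form does not give these constants directly, I will absorb the discrepancy using the parameter-independence from \cite[Lemma 2.4]{JJL2026}.

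For the lower bounds, start from the AM--GM estimate of the left side of \eqref{sODE}. This gives \eqref{2prime_lower_qneg} with $g$ in place, i.e.
$$
\varphi^{\prime\prime}+\gamma\frac{\varphi^\prime}{r}\geq\frac{kC_0}{n-1}\left[f(\varphi)+g(\varphi)(\varphi^\prime)^q\right],\qquad \gamma=n+k-2.
$$
If $g(a)\geq0$, then by monotonicity $g(\varphi(r))\geq g(a)\geq0$ for all $r$, so $g=g^+$ along the solution. The lower bounds \eqref{1prime_k_lower_2} then follow verbatim from Cases 1--2 of the lower-bound part of Lemma \ref{Lemma4.1}, using the multipliers $r^{2\gamma}\varphi^\prime$, $r^{\gamma(2-q)}(\varphi^\prime)^{1-q}$ and the exponential factor, each integrated over $[r/2,r]$.

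For \eqref{1prime_k_lower_uni}, drop $g^+$ and estimate $g^-(\varphi)(\varphi^\prime)^q$ from above by Young with exponents $\frac2q,\frac2{2-q}$:
$$
g^-w^q\leq\frac q2 P w^2+\frac{2-q}{2}f,\qquad P=\frac{(g^-)^{2/q}}{f^{2/q-1}}.
$$
This gives
$$
\varphi^{\prime\prime}+\gamma\frac{\varphi^\prime}{r}+\frac{qkC_0}{2(n-1)}P(\varphi)(\varphi^\prime)^2\geq\frac{qkC_0}{2(n-1)}f(\varphi),
$$
where the case $q=2$ is an identity. Multiply by $2r^{2\gamma}\varphi^\prime e^{\frac{qkC_0}{n-1}\int_a^\varphi P}$ to obtain an exact derivative. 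Integrating now over $[0,r]$ rather than $[r/2,r]$ avoids needing monotonicity of the weighted integrand, and uses $s^{2\gamma}$ together with the vanishing at $0$. This directly gives a bound with $\int_a^{\varphi(r)}$, but the factor $s^{2\gamma}/r^{2\gamma}$ is not bounded below on $[0,r]$.

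The main obstacle is to recover $\int_a^{\varphi(r)}$ with the clean constant $(1/2)^{n+k-2}$. The plan is to integrate over $[r/2,r]$ and then control $\int_a^{\varphi(r/2)}$ by $\int_{\varphi(r/2)}^{\varphi(r)}$. This uses convexity ($\varphi(r)-\varphi(r/2)\geq\varphi(r/2)-a$) together with the facts that $f$ is non-decreasing and the weight $e^{-\lambda\int_t^{\varphi(r)}P}$ is increasing in $t$. Together these show that the integrand on $[\varphi(r/2),\varphi(r)]$ dominates its translate on $[a,\varphi(r/2)]$. The resulting factor $2$ lost here is absorbed into the stated constant.
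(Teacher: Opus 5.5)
Your treatment of \eqref{1prime_k_upper_2}, \eqref{1prime_k_lower_2} and \eqref{1prime_k_lower_uni} matches the paper's proof. That covers three steps: replacing $g$ by $g^+$; using $g(\varphi)\ge g(a)\ge0$; and, for \eqref{1prime_k_lower_uni}, Young with exponents $\frac2q,\frac2{2-q}$, the multiplier $2r^{2\gamma}\varphi' e^{\frac{qkC_0}{n-1}\int_a^{\varphi}P}$ on $[\frac r2,r]$, and the doubling argument from convexity and monotonicity of $f(t)e^{-\lambda\int_t^{\varphi(r)}P}$.

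The gap is in \eqref{1prime_k_upper_uni}. There you need a \emph{lower} bound for $g^-(\varphi)(\varphi')^q$ in terms of $P(\varphi)(\varphi')^2$, but Young's inequality only bounds products from above. Set $s:=P(\varphi)(\varphi')^2/f(\varphi)$; then $g^-(\varphi)(\varphi')^q=s^{q/2}f(\varphi)$. Your proposed inequality $g^-w^q\ge\alpha Pw^2-\beta f$ therefore amounts to $s^{q/2}\ge\alpha s-\beta$ for all $s\ge0$. For $q<2$ this is false for every $\alpha>0$: let $s\to+\infty$. No choice of constants rescues it. The fallback to \cite[Lemma 2.4]{JJL2026} also cannot give the stated inequality. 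If the weight carries a factor $\alpha<1$ or the source a factor $1+\beta$, the comparison $J^-_{\lambda\alpha}\le\alpha^{-1}J^-_{\lambda}$ pushes the prefactor above $2c_3^{-1/k}C_0$, so you would only prove a weaker estimate.

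The missing idea is an a priori bound that confines $s$ to $[0,1)$. Since $\varphi'>0$ and $\varphi''\ge0$, the left-hand side of \eqref{sODE} is positive. Hence $f(\varphi)+g(\varphi)(\varphi')^q>0$, that is, $g^-(\varphi)(\varphi')^q<f(\varphi)$. This is Lemma \ref{Lem2.3}~(iii), $\varphi'<(f(\varphi)/g^-(\varphi))^{1/q}$, which is equivalent to $s<1$. For $s\in[0,1)$ and $q\le2$ one has $s^{q/2}\ge s$, so
$$g^-(\varphi)(\varphi')^q=\frac{g^-(\varphi)}{(\varphi')^{2-q}}(\varphi')^2\ge g^-(\varphi)\Bigl(\frac{g^-(\varphi)}{f(\varphi)}\Bigr)^{\frac{2-q}{q}}(\varphi')^2=P(\varphi)(\varphi')^2,$$
with constant exactly $1$ and no $f$-term. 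This gives $\varphi''+c_3^{-1/k}C_0P(\varphi)(\varphi')^2\le c_3^{-1/k}C_0f(\varphi)$. Your integrating factor $2\varphi'e^{2c_3^{-1/k}C_0\int_a^{\varphi}P}$, integrated over $[0,r]$, then yields \eqref{1prime_k_upper_uni} with the sharp constants. For $q=2$ your argument already works, since $P=g^-$.
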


\begin{proof}
Estimate \eqref{1prime_k_upper_2} follows from the proof of \eqref{1prime_k_upper} with $g$ replaced by $g^+$. If $g(a)\geq0$, then $g(\varphi)\geq0$, and the proof of \eqref{1prime_k_lower} also gives \eqref{1prime_k_lower_2}.

We prove \eqref{1prime_k_lower_uni} for $0<q\leq2$. Set $M_0:=\frac{kC_0}{n-1}$ and $\gamma:=n+k-2$. By \eqref{2prime_lower_qneg} and $g\geq-g^-$,
\begin{equation}\label{4.21}
\varphi^{\prime\prime}(r)+\gamma\frac{\varphi^\prime(r)}r+M_0g^-(\varphi(r))(\varphi^\prime(r))^q\geq M_0f(\varphi(r)),\quad r\in(0,R).
\end{equation}
By Young's inequality for $0<q<2$, and the identity $P=g^-$ for $q=2$, we have
$$
g^-(\varphi)(\varphi^\prime)^q\leq\frac q2P(\varphi)(\varphi^\prime)^2+\left(1-\frac q2\right)f(\varphi).
$$
Substituting this into \eqref{4.21} yields
\begin{equation*}
\varphi^{\prime\prime}(r)+\gamma\frac{\varphi^\prime(r)}r+\frac q2M_0P(\varphi(r))(\varphi^\prime(r))^2\geq\frac q2M_0f(\varphi(r)).
\end{equation*}
Multiplying by $2r^{2\gamma}\varphi^\prime(r)e^{qM_0\int_a^{\varphi(r)}P(t)dt}$ and integrating over $[\frac r2,r]$ yield
\begin{equation}\label{4.25}
(\varphi^\prime(r))^2\geq qM_02^{-2\gamma}\int_{\varphi(\frac r2)}^{\varphi(r)}f(t)e^{-qM_0\int_t^{\varphi(r)}P(\tau)d\tau}dt.
\end{equation}
For any fixed $r>0$, define
$$
\mathcal{I}(t):=f(t)e^{-qM_0\int_t^{\varphi(r)}P(\tau)d\tau}, \quad a\leq t\leq\varphi(r).
$$
Since $f$ is non-decreasing and $P\geq0$, the function $\mathcal{I}$ is non-decreasing.
Furthermore, the monotonicity of $\varphi^\prime$ and $\mathcal{I}$ implies
\begin{equation*}
\int_a^{\varphi(r)} \mathcal{I}(t) dt = \int_a^{\varphi(\frac{r}{2})} \mathcal{I}(t) dt + \int_{\varphi(\frac{r}{2})}^{\varphi(r)} \mathcal{I}(t) dt \le 2 \int_{\varphi(\frac{r}{2})}^{\varphi(r)} \mathcal{I}(t) dt,\quad r\in(0,R).
\end{equation*}
Combining this inequality with \eqref{4.25}, we obtain
\begin{equation*}
(\varphi^\prime(r))^2 \geq q M_0 \left(\frac{1}{2}\right)^{2\gamma+1} \int_{a}^{\varphi(r)}f(t) e^{-q M_0\int_t^{\varphi(r)}  P(\tau) d\tau}  dt.
\end{equation*}
Taking square roots gives \eqref{1prime_k_lower_uni}.

We next prove \eqref{1prime_k_upper_uni}. Since $g=-g^-$, estimate \eqref{2prime_upper} gives
\begin{equation*}
\varphi^{\prime\prime}(r)+c_3^{-\frac1k}C_0g^-(\varphi(r))(\varphi^\prime(r))^q\leq c_3^{-\frac1k}C_0f(\varphi(r)),\quad r\in(0,R).
\end{equation*}
Where $g^-(\varphi)>0$, Lemma \ref{Lem2.3}~(iii) and $0<q\leq2$ imply
$$
g^-(\varphi)(\varphi^\prime)^q=\frac{g^-(\varphi)}{(\varphi^\prime)^{2-q}}(\varphi^\prime)^2\geq g^-(\varphi)\left(\frac{g^-(\varphi)}{f(\varphi)}\right)^{\frac{2-q}{q}}(\varphi^\prime)^2=P(\varphi)(\varphi^\prime)^2.
$$
The same inequality is immediate when $g^-(\varphi)=0$. Hence
\begin{equation*}
\varphi^{\prime\prime}(r)+c_3^{-\frac1k}C_0P(\varphi(r))(\varphi^\prime(r))^2\leq c_3^{-\frac1k}C_0f(\varphi(r)).
\end{equation*}
Multiplying by $2\varphi^\prime(r)e^{2c_3^{-\frac1k}C_0\int_a^{\varphi(r)}P(t)dt}$, integrating from $0$ to $r$, and using $\varphi^\prime(0)=0$, we obtain
$$
(\varphi^\prime(r))^2\leq2c_3^{-\frac1k}C_0\int_a^{\varphi(r)}f(t)e^{-2c_3^{-\frac1k}C_0\int_t^{\varphi(r)}P(\tau)d\tau}dt.
$$
Taking square roots proves \eqref{1prime_k_upper_uni} and completes the proof.
\end{proof}

Based on the preceding derivative estimates, we now characterize the global solvability of \eqref{CP} under condition (B) in terms of the corresponding Keller--Osserman conditions.

\begin{theorem}[Global Existence under Condition (B)]\label{Th4.1-2}
Assume that $f$ and $g$ satisfy condition (B), and let $1\leq k<n$.
Then the following conclusions hold.
\begin{itemize}
\item[(i)] Let $0<q<2$ and assume that $\lim\limits_{t\to+\infty}g(t)>0$. For any initial value $a>0$, the Cauchy problem \eqref{CP} admits a positive global solution $\varphi \in C^2[0,+\infty)$ satisfying $\varphi^\prime(r)>0$ and $\varphi^{\prime\prime}(r)\geq0$ for all $r>0$
if and only if condition \eqref{KO_2} holds.

\item[(ii)] Let $0<q\leq 2$ and assume that $\lim\limits_{t\to+\infty}g(t)\leq 0$. For any initial value $a>0$, the Cauchy problem \eqref{CP} admits a positive global solution $\varphi \in C^2[0,+\infty)$ satisfying $\varphi^\prime(r)>0$ and $\varphi^{\prime\prime}(r)\geq0$ for all $r>0$ if and only if condition \eqref{KO_3} holds.
\end{itemize}
\end{theorem}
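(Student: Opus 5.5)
The argument follows the proof of Theorem \ref{Th4.1}. Theorem \ref{Th3.1-2} and Remark \ref{Rem3.1} supply a positive increasing convex solution on a maximal interval $[0,R)$. Lemma \ref{Lemma4.1-2} supplies the two-sided derivative estimates. Regularity $\varphi\in C^2[0,+\infty)$ comes from Theorem \ref{Th3.1-2}, and the convexity and monotonicity on the whole interval come from Lemma \ref{Lem2.3}.

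\emph{Part (i).} For sufficiency, assume \eqref{KO_2} and suppose $R<+\infty$. The upper bound \eqref{1prime_k_upper_2} shows that $\varphi^\prime$ stays bounded while $\varphi$ is bounded. By \eqref{blow-up-alternative} we then get $\varphi\to+\infty$. Integrating $\varphi^\prime/[\cdots]\le C_1$ over $(0,R)$ and enlarging the lower limits $\int_a^t$ to $\int_0^t$ bounds the integral in \eqref{KO_2} by $C_1R$, which is a contradiction.

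For necessity, the lower bound \eqref{1prime_k_lower_2} requires $g(a)\ge0$, and this may fail. Since $\lim_{t\to+\infty} g(t)>0$, there is $t_1$ with $g>0$ on $(t_1,\infty)$. A global solution satisfies $\varphi\to\infty$, so pick $r_*$ with $\varphi(r_*)\ge t_1$. I would then rerun the lower-bound argument from Lemma \ref{Lemma4.1} on $[r_*,\infty)$ only. Start from \eqref{2prime_lower_qneg} and split off the $f$- and $g$-parts, which are now nonnegative. Multiply by $r^{2\gamma}\varphi^\prime$, respectively $r^{\gamma(2-q)}(\varphi^\prime)^{1-q}$, and integrate over $[r/2,r]$ for $r\ge 2r_*$. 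This gives
$\varphi^\prime(r)\ge c\,[(\int_{\varphi(r/2)}^{\varphi(r)}f)^{1/2}+(\int_{\varphi(r/2)}^{\varphi(r)}g^+)^{1/(2-q)}]$.

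To turn the integrals over $[\varphi(r/2),\varphi(r)]$ into integrals from a fixed base point, use the halving trick: $\int_{b}^{\varphi(r)}\le 2\int_{\varphi(r/2)}^{\varphi(r)}$ with $b=\varphi(r_*)$, which holds by monotonicity of $\varphi^\prime$, $f$ and $g^+$. Then repeat the change-of-variables argument of Theorem \ref{Th4.1}. Comparing $\int_b^t$ with $\int_0^{t/2}$ for large $t$ gives $C(r-r_1)\le$ a finite tail of the \eqref{KO_2} integral, and letting $r\to\infty$ yields the contradiction.

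\emph{Part (ii).} The upper estimate \eqref{1prime_k_upper_uni} requires $g\le0$ everywhere. If $\lim g\le0$, then $g\le0$ on all of $\mathbb R$ by monotonicity, so $g^-=-g$ and \eqref{1prime_k_upper_uni} applies. For sufficiency, assume \eqref{KO_3} and $R<\infty$. By \eqref{1prime_k_upper_uni}, $\varphi^\prime$ is bounded by a function of $\varphi$, so again $\varphi\to\infty$. Integrating gives
$\int_a^\infty dt/\sqrt{J^-_{\lambda,a}(t)}\le C R$ with $\lambda=2c_3^{-1/k}C_0$.

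Two facts then finish sufficiency. First, the $\lambda$-independence from \cite[Lemma 2.4]{JJL2026} replaces this weight by weight $1$. Second, we must pass from base point $a$ to base point $0$ in \eqref{KO_3}. This is the main obstacle, because of the weight $e^{-\int_s^tP}$. Here $P$ is non-increasing, and $\int_0^a f e^{-\int_s^tP}\,ds$ is dominated by $\int_a^{2a} f e^{-\int_s^tP}\,ds$ for large $t$. Indeed, on $[0,a]$ we have $f(s)\le f(a)$, while $\int_s^t P\ge\int_{s+a}^t P$ whenever $P\ge0$. Hence $J^-_{1,0}(t)\le 2J^-_{1,a}(t)$ for $t\ge2a$, which gives the contradiction with \eqref{KO_3}.

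For necessity in part (ii), a global solution has $\varphi\to\infty$. The universal lower bound \eqref{1prime_k_lower_uni} gives $\varphi^\prime\ge c\sqrt{J^-_{\lambda_q,a}(\varphi)}$. Since $J^-_{\lambda,a}\le J^-_{\lambda,0}$, integrating gives $c(r-r_1)\le\int_{\varphi(r_1)}^{\infty}dt/\sqrt{J^-_{\lambda_q,0}(t)}$. If \eqref{KO_3} fails, this integral is finite by the $\lambda$-independence, and letting $r\to\infty$ gives a contradiction.
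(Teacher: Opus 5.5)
Part (i) of your proposal is fine and follows the paper's argument: you restart the lower-bound computation where $g(\varphi)\ge0$, apply the halving trick with base $\varphi(r_*)$, and then use the change of variables from Theorem \ref{Th4.1}. The gap is in part (ii), where you attach the two base-point comparisons to the wrong implications.

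\emph{Necessity.} You integrate $\varphi^\prime\ge c\sqrt{J^-_{\lambda_q,a}(\varphi)}$ and then, ``since $J^-_{\lambda,a}\le J^-_{\lambda,0}$'', conclude $c(r-r_1)\le\int_{\varphi(r_1)}^{\infty}dt/\sqrt{J^-_{\lambda_q,0}(t)}$. This inference runs backwards. The inequality $J^-_{\lambda_q,a}\le J^-_{\lambda_q,0}$ gives $1/\sqrt{J^-_{\lambda_q,a}}\ge1/\sqrt{J^-_{\lambda_q,0}}$. So the integral you control dominates the one that failure of \eqref{KO_3} makes finite, and no contradiction follows. What you need is the reverse bound $J^-_{\lambda_q,0}(t)\le C\,J^-_{\lambda_q,a}(t)$ for large $t$.

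\emph{Sufficiency.} Here there is no obstacle at all. From $\int_a^{\infty}dt/\sqrt{J^-_{\lambda_1,a}(t)}\le\sqrt{\lambda_1}R$ and the trivial $J^-_{\lambda_1,0}\ge J^-_{\lambda_1,a}$ you get $\int_a^{\infty}dt/\sqrt{J^-_{\lambda_1,0}(t)}\le\sqrt{\lambda_1}R$ at once, which is exactly what the paper does. Your inequality $J^-_{1,0}\le2J^-_{1,a}$ points the other way. On its own it only yields $\int dt/\sqrt{J^-_{1,a}}\le\sqrt2\int dt/\sqrt{J^-_{1,0}}$, which does not contradict \eqref{KO_3}.

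The repair is to move your shift argument into the necessity proof. It works for every $\lambda>0$. For $s\in[0,a]$ and $t\ge2a$ you have $f(s)\le f(s+a)$, and $\int_s^tP\ge\int_{s+a}^tP$ because $P\ge0$. Hence $\int_0^af(s)e^{-\lambda\int_s^tP}ds\le\int_a^{2a}f(\sigma)e^{-\lambda\int_\sigma^tP}d\sigma$, so $J^-_{\lambda,0}(t)\le2J^-_{\lambda,a}(t)$. Choosing $\varphi(r_1)\ge2a$ then gives $c(r-r_1)\le\sqrt2\int_{\varphi(r_1)}^{\infty}dt/\sqrt{J^-_{\lambda_q,0}(t)}$, and letting $r\to+\infty$ gives the contradiction.

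With this swap your proof is complete. It then differs only mildly from the paper's. The paper uses $J^-_{\lambda_q,a}(t)\ge\int_{t/2}^tf(s)e^{-\lambda_q\int_s^tP}ds\ge J^-_{\lambda_q,0}(t/2)$ for $t\ge A>2a$, followed by the substitution $t\mapsto t/2$. That comparison needs $P$ to be non-increasing. Your shift comparison needs only $P\ge0$ and the monotonicity of $f$, and it avoids the change of variables.
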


\begin{proof}
We first prove (i). The sufficiency follows from \eqref{1prime_k_upper_2} by the argument in Theorem \ref{Th4.1}, with $g$ replaced by $g^+$.
For the necessity, let $\varphi$ be a positive increasing convex global solution. Then $\varphi(r)\to+\infty$ as $r\to+\infty$. Choose $r_0>0$ such that $g(\varphi(r))>0$ for $r\geq r_0$, and set $A:=\varphi(r_0)$. Repeating the argument leading to \eqref{1prime_k_lower_2} for sufficiently large $r$, with $a$ replaced by $A$ and $g$ by $g^+$, and then applying the necessity argument in Theorem \ref{Th4.1}, we obtain \eqref{KO_2}.

We next prove (ii). The assumptions imply that $g\leq0$, so $g^-=-g$ is non-increasing. Since $f$ is positive and non-decreasing on $(0,+\infty)$ and $0<q\leq2$, the function $P$ in \eqref{P} is nonnegative and non-increasing. For $\lambda>0$, $b\geq0$, and $t>b$, set
$$
J_{\lambda,b}^-(t):=\int_b^t f(s)e^{-\lambda\int_s^tP(\tau)d\tau}ds.
$$

\underline{\textit{Sufficiency.}} Assume that \eqref{KO_3} holds and fix $a>0$. By Theorem \ref{Th3.1-2} and Remark \ref{Rem3.1}, let $\varphi$ be a positive increasing convex solution on its maximal interval $[0,R)$. Suppose that $R<+\infty$. Estimate \eqref{1prime_k_upper_uni} bounds $\varphi^\prime$ whenever $\varphi$ is bounded, so \eqref{blow-up-alternative} gives $\varphi(r)\to+\infty$ as $r\to R^-$. Set $\lambda_1:=2c_3^{-\frac1k}C_0$. Integrating \eqref{1prime_k_upper_uni}, using $J_{\lambda_1,0}^-\geq J_{\lambda_1,a}^-$, and letting $r\to R^-$ yield
$$
\int_a^{+\infty}\frac{dt}{\sqrt{J_{\lambda_1,0}^-(t)}}\leq\sqrt{\lambda_1}R<+\infty.
$$
By \cite[Lemma 2.4~(i)]{JJL2026}, this contradicts \eqref{KO_3}. Hence $R=+\infty$.

\underline{\textit{Necessity.}} Suppose that a positive increasing convex global solution exists but \eqref{KO_3} fails. Set $\lambda_0:=\frac{qkC_0}{n-1}$. By \cite[Lemma 2.4~(i)]{JJL2026}, there exists $A_0>0$ such that
\begin{equation*}
\int_{A_0}^{+\infty}\frac{dt}{\sqrt{J_{\lambda_0,0}^-(t)}}<+\infty.
\end{equation*}
Since $\varphi(r)\to+\infty$ as $r\to+\infty$, we can choose $r_1>0$ such that $A:=\varphi(r_1)>2\max\{a,A_0\}$. The monotonicity of $f$ and the non-increasing property of $P$ give
$$
J_{\lambda_0,a}^-(t)\geq\int_{\frac t2}^t f(s)e^{-\lambda_0\int_s^tP(\tau)d\tau}ds\geq J_{\lambda_0,0}^-\left(\frac t2\right),\quad t\geq A.
$$
Integrating \eqref{1prime_k_lower_uni} therefore yields
\begin{equation*}
\begin{aligned}
2^{-(n+k-\frac32)}\sqrt{\lambda_0}(r-r_1)&\leq\int_A^{\varphi(r)}\frac{dt}{\sqrt{J_{\lambda_0,a}^-(t)}}\\
&\leq2\int_{\frac A2}^{+\infty}\frac{dt}{\sqrt{J_{\lambda_0,0}^-(t)}}<+\infty,\quad r>r_1.
\end{aligned}
\end{equation*}
Letting $r\to+\infty$ gives a contradiction and proves \eqref{KO_3}. Finally, the regularity at the origin in both cases follows from Theorem \ref{Th3.1-2}.
\end{proof}

\begin{remark}\label{Rem4.1}
Let $1\leq k<n$ and $0<q<2$. Under condition \textup{(B)}, if $\lim\limits_{t\to+\infty}g(t)>0$, then $q\leq1$ is necessary for the existence of a positive increasing convex global solution of \eqref{CP}.
Indeed, choose $t_0>0$ such that $g(t_0)>0$. The monotonicity of $g$ gives
$$
\int_0^t g^+(s)ds\geq\int_{t_0}^t g(s)ds\geq g(t_0)(t-t_0)\geq\frac{g(t_0)}2t,\quad t\geq2t_0.
$$
Consequently,
\begin{equation*}
\int_{2t_0}^{+\infty}\frac{dt}{\left(\int_0^t f(s)ds\right)^{\frac12}+\left(\int_0^t g^+(s)ds\right)^{\frac1{2-q}}}\leq\left(\frac2{g(t_0)}\right)^{\frac1{2-q}}\int_{2t_0}^{+\infty}\frac{dt}{t^{\frac1{2-q}}}.
\end{equation*}
For $1<q<2$, the right-hand side is finite since $\frac1{2-q}>1$. Thus \eqref{KO_2} fails, and Theorem \ref{Th4.1-2}~(i) implies that \eqref{CP} admits no positive increasing convex global solution for any initial value $a>0$. Under the hypotheses of Theorem \ref{Th4.1}, the same argument applies to \eqref{KO_1}, since $g^+=g$, and yields the corresponding nonexistence conclusion under condition \textup{(A)}.
\end{remark}

\vspace{3mm}

\section{Proofs of the Main Results}\label{Section5}

We prove Theorems \ref{Th1.1} and \ref{Th1.2} using the comparison and equivalence principles established for the determinant case in \cite[Lemma 5.1 and Theorem 5.1]{JJL2026}. We state their counterparts for $1\leq k<n$ and record the required modification to the linearization.

\subsection{Comparison and Equivalence Principles}

\begin{lemma}[Local comparison principle]\label{Lem6.1}
Let $u$ be a positive admissible entire subsolution of \eqref{eq1.1}, and let $\varphi$ be a positive solution of \eqref{CP} on $[0,R)$, where $0<R<+\infty$, satisfying $\varphi^\prime(r)>0$, $\varphi^{\prime\prime}(r)\geq0$ for $r\in(0,R)$, and $\lim\limits_{r\to R^-}\varphi(r)=+\infty$.
Set $v(x):=\varphi(|x|)$ in $B_R$. Suppose that one of the following conditions holds:
\begin{enumerate}[label=(\roman*)]
\item $q=0$, condition \textup{(A)} holds, and $u\in C^2(\mathbb R^n)$;
\item $q>0$, condition \textup{(A)} or \textup{(B)} holds, $f(t)>0$ for $t>0$, and $u\in C^2(\mathbb R^n)$;
\item $q<0$, condition \textup{(A)} holds, and $u$ is a strong subsolution in the sense of Definition \ref{def:entire_admissible_subsolutions};
\item $0<q<1$, condition \textup{(A)} holds, $u\in C^2(\mathbb R^n)$, and $Du(0)\neq0$.
\end{enumerate}
Then
\begin{equation}\label{eq6.1}
u(x)\leq v(x),\quad x\in B_R.
\end{equation}
\end{lemma}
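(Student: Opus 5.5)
The plan is a maximum-principle comparison on a slightly shrunken ball, followed by a limiting argument. Since $v(x)\to+\infty$ as $|x|\to R^-$ and $u$ is continuous on $\overline{B_R}$, the set where $u>v$ is compactly contained in $B_R$. Suppose \eqref{eq6.1} fails. To create strictness, I will replace $v$ by a slightly dilated or shifted comparison function. One option is $v_\varepsilon(x):=\varphi_\varepsilon(|x|)$, where $\varphi_\varepsilon$ is the solution of \eqref{CP} with center value $a-\varepsilon$; by Remark \ref{Rem3.1} this blows up at a radius $R_\varepsilon$ close to $R$. A more robust option is $v_\varepsilon(x):=\varphi(|x|/(1-\varepsilon))$ on $B_{(1-\varepsilon)R}$, which is a strict supersolution wherever $f(\varphi)+g(\varphi)|\varphi'|^q>0$. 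Indeed, scaling multiplies the left-hand side of \eqref{sODE} by $(1-\varepsilon)^{-2}$ and the gradient term by $(1-\varepsilon)^{-q}$, and for $q<2$ the first factor dominates. For $q\geq 2$, or under (B) with $g<0$, I would instead use the shift $v_\varepsilon:=v+\varepsilon$ combined with the monotonicity of $f,g$. With either choice, let $w:=u-v_\varepsilon$, which attains a positive interior maximum at some $x_0$, and then let $\varepsilon\to0$.

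At $x_0$ we have $Du(x_0)=Dv_\varepsilon(x_0)$, $D^2u(x_0)\le D^2v_\varepsilon(x_0)$ and $u(x_0)>v_\varepsilon(x_0)$. Since $M\mapsto\Delta M\,I-M$ preserves order, this gives $\Delta uI-D^2u\le \Delta v_\varepsilon I-D^2v_\varepsilon$ at $x_0$. The map $S_k^{1/k}$ is monotone on $\overline\Gamma_k$, and $v_\varepsilon$ is strictly admissible off the origin by Remark \ref{Rem2.2}. Therefore
\[
f(u)+g(u)|Du|^q\le S_k^{\frac1k}[\Delta uI-D^2u]\le S_k^{\frac1k}[\Delta v_\varepsilon I-D^2v_\varepsilon]<f(v_\varepsilon)+g(v_\varepsilon)|Dv_\varepsilon|^q,
\]
with strict inequality supplied by the construction. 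Because the gradients agree, the monotonicity of $f$ and $g$ contradicts $u(x_0)>v_\varepsilon(x_0)$. Under (B) I have to be careful, since $g$ may be negative: the equal gradients let me compare $f(u)+g(u)p^q$ and $f(v)+g(v)p^q$ directly, and this is non-decreasing in the value variable.

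The case distinctions govern where strictness and regularity can fail. In case (i), and in case (ii) with $f>0$, strictness at $x_0$ comes from $f(v_\varepsilon(x_0))>0$ (or $f+g>0$), and everything is $C^2$. In case (iii), $u$ is only $W^{2,\delta}_{loc}$ and $v$ is singular at $0$. There I would use the Bony maximum principle (or Aleksandrov–Bakelman–Pucci) for $W^{2,n}$-type functions away from the origin. Since $\delta<1-1/q$ may be less than $n$, I would instead exploit $u\in C^2(\mathbb R^n\setminus\{0\})$ and treat $x_0=0$ separately. At the origin, $v$ has $|Dv|\sim r^{1/(1-q)}$, so $|Dv|^q\to+\infty$, while $u\in C^1$; if $Du(0)\neq0$, no touching can occur there. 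Otherwise I would argue via $w$ having a maximum at $0$ and compare radial averages of $u$ along rays. In case (iv) the degeneracy $f(a)=0$ means the right-hand side vanishes at the origin of $v$. The hypothesis $Du(0)\neq0$ excludes $x_0=0$, because $Dv(0)=0$. Away from $0$, $|Dv|>0$ makes $g(v)|Dv|^q>0$, which provides strictness.

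I expect the main obstacle to be the strictness and regularity at the maximum point in the singular case (iii) and at the origin generally. I would handle it first by choosing the perturbation $v_\varepsilon$ so that its touching point cannot be $0$. Concretely, I would shift the center slightly, as in \cite[Lemma 5.1]{JJL2026}, adapting their linearization. The difference $S_k^{1/k}[T_1u]-S_k^{1/k}[T_1v]$ is written as $a^{ij}D_{ij}(u-v)$ with $a^{ij}$ obtained by integrating the derivative of $S_k^{1/k}\circ T_1$ along the segment between $u$ and $v$. By concavity on $\Gamma_k$ this coefficient matrix is nonnegative definite, which is the modification needed for $k<n$.
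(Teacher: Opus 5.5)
Your route differs from the paper's. You use a strict supersolution plus pointwise touching; the paper writes $\mathcal L[u]-\mathcal L[v]=a^{ij}D_{ij}w$ near contact points and applies the strong maximum principle. As written, however, your perturbations do not produce the strict inequality your chain needs.

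Put $\psi(r):=\varphi(\sigma r)$ and $\rho:=\sigma r$. Then $(\psi''(r),\psi'(r)/r)=\sigma^2(\varphi''(\rho),\varphi'(\rho)/\rho)$. By one-homogeneity, the left side of \eqref{sODE} for $\psi$ equals $\sigma^2C_0[f+gp^q]$, while the right side equals $C_0[f+\sigma^qgp^q]$, with $f,g$ evaluated at $\varphi(\rho)$ and $p=\varphi'(\rho)$. For your choice $\sigma=(1-\varepsilon)^{-1}>1$, with $g\geq0$ and $q<2$, the left side is strictly larger wherever $f+gp^q>0$. So $\varphi(|x|/(1-\varepsilon))$ is a strict \emph{sub}solution, and the last inequality in your display is reversed.

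The other two perturbations also fail. The shift $v+\varepsilon$ only gives $S_k^{1/k}[\Delta vI-D^2v]\leq f(v+\varepsilon)+g(v+\varepsilon)|Dv|^q$, with equality allowed since $f,g$ are merely non-decreasing, so it yields no contradiction at a touching point. The center value $a-\varepsilon$ has the same defect, and it also needs continuous dependence of the blow-up radius, which is unavailable without uniqueness. Hence nothing in your proposal handles $q\geq2$ or condition (B).

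The repair is to dilate the other way. Take $\sigma=1-\varepsilon$ and $v_\varepsilon(x):=\varphi((1-\varepsilon)|x|)$ on $B_{R/(1-\varepsilon)}$. Then right side minus left side is $C_0[(1-\sigma^2)f+(\sigma^q-\sigma^2)gp^q]$. When $g\geq0$ this is positive wherever $f+gp^q>0$ if $q<2$, and wherever $f>0$ if $q=2$. When $g<0$ (condition (B), $0<q\leq2$) it is at least $C_0(1-\sigma^q)f>0$, using $g^-p^q<f$ from Lemma \ref{Lem2.3}~(iii). Since $v_\varepsilon\leq v$ in $B_R$, proving $u\leq v_\varepsilon$ suffices. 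This avoids both the positive definiteness of $(a^{ij})$ and any Lipschitz control of $|p|^q$. It still fails for $q>2$ with $g>0$, where $\sigma^q<\sigma^2$. Case (ii) of the statement includes such $q$, and the paper's linearization covers them because $p\mapsto|p|^q$ is locally Lipschitz for $q\geq1$.

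The second gap is the origin in case (iii). There $u$ is only $C^1$ at $0$ and $v_\varepsilon$ is not $C^2$ at its center. A maximum of $u-v_\varepsilon$ at $0$ therefore gives only $Du(0)=Dv_\varepsilon(0)=0$ and no second-order information. Comparing radial averages along rays is not an argument, and a single shift of the center does not by itself keep the contact point away from the new center or from the singular point of $u$. What is missing is the translation argument:
\begin{enumerate}
\item For all small $\xi$, $u-v_\varepsilon(\cdot-\xi)$ still has a positive interior maximum.
\item A maximum point at $\xi$ forces $Du(\xi)=0$.
\item A maximum point at $0\neq\xi$ forces $Du(0)=Dv_\varepsilon(-\xi)$, whose direction is $-\xi/|\xi|$. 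This can happen only for $\xi$ on a single ray.
\item If for all small $\xi$ every maximum point were $\xi$ or $0$, then $Du$ would vanish on a ball minus a ray, hence on the ball. So $u$ would be locally constant, contradicting $|Du|>0$ a.e.
\end{enumerate}
Thus some $\xi$ has a contact point off both $\xi$ and $0$. There $u$ is $C^2$ with $Du\neq0$, the subsolution inequality holds pointwise by continuity, and your touching argument applies.

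Finally, your description of the linearization is inaccurate. Nonnegativity of $(a^{ij})$ does not follow from concavity, and nonnegativity alone would not support the strong maximum principle. The paper needs $\lambda(M_\theta)\in\Gamma_k$ for every $\theta\in[0,1]$. This holds because at a contact point $S_k^{1/k}[\Delta uI-D^2u]\geq f(v)+g(v)|Dv|^q>0$, which puts both endpoints in $\Gamma_k$. Convexity of $\Gamma_k$ and positive definiteness of $(S_k^{ij})$ on $\Gamma_k$ then give local uniform ellipticity.
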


\begin{proof}

We follow the proof and notation of \cite[Lemma 5.1]{JJL2026}. Set $\mathcal L[w]:=S_k^{\frac1k}[\Delta wI-D^2w]$, $w:=u-v$, $u_\theta:=\theta u+(1-\theta)v$, and $M_\theta:=\Delta u_\theta I-D^2u_\theta$. At the contact points considered there, admissibility and the strictly positive radial inhomogeneous term place both transformed Hessian matrices in $\Gamma_k$. By continuity and the convexity of $\Gamma_k$, we have $\lambda(M_\theta)\in\Gamma_k$ for $\theta\in[0,1]$ in a neighborhood of each such point. Hence
$$
\mathcal L[u]-\mathcal L[v]=\sum_{i,j=1}^n a^{ij}D_{ij}w,
$$
where
$$
a^{ij}(x):=\frac1k\int_0^1 S_k[M_\theta]^{\frac1k-1}\left[\left(\sum_{\ell=1}^n S_k^{\ell\ell}[M_\theta]\right)\delta_{ij}-S_k^{ij}[M_\theta]\right]d\theta,
$$
and $S_k^{ij}[M]$ denotes the $(i,j)$ entry of the matrix derivative $D_MS_k[M]$. Since $(S_k^{ij}[M])$ is positive definite for $\lambda(M)\in\Gamma_k$, the coefficient matrix $(a^{ij})$ is locally uniformly positive definite. The remaining gradient estimates, translation argument, and application of the strong maximum principle proceed as in \cite[Lemma 5.1]{JJL2026}, yielding \eqref{eq6.1}.

\end{proof}

\begin{theorem}[Equivalence theorem]\label{Th6.1}
Let $1\leq k<n$ and $q\leq2$, and assume that one of the following sets of hypotheses holds.
\begin{enumerate}[label=(\alph*)]
\item Condition (A) holds and either $f(t)>0$ for all $t>0$, or $q<1$.
\item Condition (B) holds and $q>0$.
\end{enumerate}
Then the following assertions hold.
\begin{enumerate}[label=(\roman*)]
\item If $q\geq0$, equation \eqref{eq1.1} admits a positive nontrivial admissible classical entire subsolution if and only if, for some $a>0$, problem \eqref{CP} admits a positive global solution $\varphi\in C^2[0,+\infty)$ satisfying $\varphi^\prime(r)>0$ and $\varphi^{\prime\prime}(r)\geq0$ for all $r>0$.

\item If $q<0$, equation \eqref{eq1.1} admits a positive nontrivial admissible strong entire subsolution $u\in W_{loc}^{2,\delta}(\mathbb R^n)$ for $\delta\in[1,1-\frac1q)$ if and only if, for some $a>0$, problem \eqref{CP} admits a positive global solution $\varphi\in C^1[0,+\infty)\cap C^2(0,+\infty)\cap W_{loc}^{2,\delta}[0,+\infty)$ for $\delta\in[1,1-\frac1q)$ satisfying $\varphi^\prime(r)>0$ and $\varphi^{\prime\prime}(r)\geq0$ for all $r>0$.
\end{enumerate}
\end{theorem}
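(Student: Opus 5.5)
The equivalence has two directions, and I would treat them separately.

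\emph{From radial solutions to entire subsolutions.} Suppose \eqref{CP} has, for some $a>0$, a positive global solution $\varphi$ with $\varphi^\prime>0$ and $\varphi^{\prime\prime}\geq0$ on $(0,+\infty)$. Set $u(x):=\varphi(|x|)$.
\begin{enumerate}[label=(\arabic*)]
\item For $q\geq0$ we have $\varphi\in C^2[0,+\infty)$ and $\varphi^\prime(0)=0$. Lemma \ref{Lem2.1} then gives $u\in C^2(\mathbb R^n)$.
\item Remark \ref{Rem2.2}, applied on every ball $B_R$, shows that $\lambda(\Delta uI-D^2u)\in\Gamma_k$ away from the origin.
\item At the origin, admissibility follows from the eigenvalue formula \eqref{2.1}, since $\varphi^{\prime\prime}(0)\geq0$.
\item Equation \eqref{sODE} holds pointwise, so $u$ is a classical solution and in particular a subsolution. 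It is nontrivial because $\varphi^\prime>0$, so $u$ is not constant.
\item For $q<0$, Remark \ref{Rem2.2} gives $u\in C^1(\mathbb R^n)\cap C^2(\mathbb R^n\setminus\{0\})\cap W^{2,\delta}_{loc}$ for $\delta\in[1,1-\frac1q)$. It also gives $|Du|=\varphi^\prime>0$ off the origin. Hence $u$ is an admissible strong entire subsolution.
\end{enumerate}

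\emph{From entire subsolutions to radial solutions.} Suppose $u$ is a positive nontrivial admissible entire subsolution. I would argue by contradiction: assume that no center value $a>0$ yields a global increasing convex solution.
\begin{enumerate}[label=(\arabic*)]
\item Fix $a$ and take the local increasing convex solution from Theorem \ref{Th3.1} or Theorem \ref{Th3.1-2}, extended to its maximal interval $[0,R)$ as in Remark \ref{Rem3.1}. By assumption $R<+\infty$.
\item Lemma \ref{Lem2.3} keeps the solution increasing and convex on $[0,R)$.
\item The derivative upper bounds \eqref{1prime_k_upper}, \eqref{1prime_k_upper_2} and \eqref{1prime_k_upper_uni} bound $\varphi^\prime$ whenever $\varphi$ is bounded. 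The blow-up alternative \eqref{blow-up-alternative} then forces $\varphi\to+\infty$ as $r\to R^-$.
\item Lemma \ref{Lem6.1} now gives $u\leq v_a$ on $B_R$, where $v_a(x)=\varphi_a(|x|)$.
\end{enumerate}

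The center of the ball is arbitrary by translation invariance. Choose the center $x_0$ and the value $a<u(x_0)$, for instance $a=u(x_0)/2$. Then $v_a(x_0)=a<u(x_0)$, which contradicts $u\leq v_a$. So for such $a$ the radial solution must be global, which proves the reverse implication.

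Two cases need care, and I expect them to be the main obstacle.
\begin{enumerate}[label=(\roman*)]
\item The degenerate case $0<q<1$ with $f$ possibly vanishing. Here Lemma \ref{Lem6.1}~(iv) requires $Du(x_0)\neq0$. Since $u$ is nontrivial, i.e.\ non-constant, I would choose $x_0$ to be a non-critical point of $u$ and translate it to the origin.
\item Applying Lemma \ref{Lem6.1} needs the hypotheses of conditions (A)/(B) in the form listed there. Under (A) with $q>0$ but $f$ possibly zero somewhere, $q<1$ must hold and we fall into case (iv). Under (A) with $q=0$ we use case (i); with $q<0$ we use case (iii); with $f>0$ and $q>0$ we use case (ii).
\end{enumerate}

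Beyond this bookkeeping, the substantive work sits inside Lemma \ref{Lem6.1}, which I take as given. One point must still be checked: the translated radial solution is compared only on its own ball. This is valid because $v_a\to+\infty$ on the boundary of the ball while $u$ is finite there.
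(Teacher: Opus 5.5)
Your proposal is correct and follows essentially the same route as the paper. The converse direction verifies directly that $u(x)=\varphi(|x|)$ is an admissible (strong) entire solution via Lemma~\ref{Lem2.1} and Remark~\ref{Rem2.2}. The forward direction takes $a<u(x_0)$ and extends the local radial solution of Theorem~\ref{Th3.1} or Theorem~\ref{Th3.1-2} maximally; the derivative upper bounds and \eqref{blow-up-alternative} then force finite-radius blow-up, contradicting Lemma~\ref{Lem6.1} at the center. Your non-critical center in the degenerate case $0<q<1$ is exactly what hypothesis (iv) of Lemma~\ref{Lem6.1} and the nontriviality assumption are for; for $q<0$, keep the center at the origin, where the singularity of a strong subsolution is placed.
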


\begin{proof}
The necessity follows from the argument in \cite[Theorem 5.1]{JJL2026}, using the local existence results in Section \ref{Section3}, the derivative estimates in Section \ref{Section4}, and Lemma \ref{Lem6.1}. Conversely, a positive increasing convex global solution $\varphi$ yields the admissible entire solution $u(x):=\varphi(|x|)$. The Sobolev regularity in (ii) follows from Lemma \ref{Lem2.2}~(iv) and Remark \ref{Rem2.2}.
\end{proof}

\subsection{Proofs of Theorems \ref{Th1.1} and \ref{Th1.2}}

Theorem \ref{Th1.1} follows from the equivalence in Theorem \ref{Th6.1} and the global existence result in Theorem \ref{Th4.1}. The regularity at the origin follows from Theorem \ref{Th3.1} and Remark \ref{Rem2.2}.
Similarly, Theorem \ref{Th6.1}~(i), together with Theorem \ref{Th4.1-2}, establishes Theorem \ref{Th1.2}.

\begin{remark}\label{Rem6.2}
If $f$ is strictly positive on $\mathbb R$, with the remaining coefficient assumptions unchanged, Lemma \ref{Lem6.1} and Theorem \ref{Th6.1} remain valid without positivity restrictions on $u$ and $\varphi$, and the initial value may be any $a\in\mathbb R$. The Keller--Osserman conditions are unchanged, since their convergence at infinity is independent of finite lower integration limits.
\end{remark}

\section{Boundary Blow-Up Solutions in Balls}\label{Section6}

We prove Theorems \ref{Th1.3} and \ref{Th1.4}, establishing necessary and sufficient conditions for radial boundary blow-up solutions and two-sided estimates for their blow-up radii. We subsequently derive upper bounds for the corresponding profiles. The Keller--Osserman functions are defined in Definition \ref{Def1.boundary_KO}.

\begin{definition}\label{Def1.radial_solutions}
Let $a\in\mathbb R$ and $0<\mathcal R<+\infty$. Suppose that $\varphi\in C^1[0,\mathcal R)\cap C^2(0,\mathcal R)$ solves \eqref{CP} with $\varphi(0)=a$ and satisfies $\varphi^\prime(r)>0$ and $\varphi^{\prime\prime}(r)\geq0$ for $r\in(0,\mathcal R)$. The associated function $u_a(x):=\varphi(|x|)$ is called a \emph{radial boundary blow-up solution} of \eqref{eq1.2} in $B_{\mathcal R}$ if
$$
\lim\limits_{|x|\to\mathcal R^-}u_a(x)=+\infty.
$$
The value $a=u_a(0)$ is its center value, and $\mathcal R$ is its blow-up radius.
\end{definition}

\subsection{Boundary Blow-Up Conditions and Radius Estimates}

\begin{proof}[Proof of Theorem \ref{Th1.3}]
By Definition \ref{Def1.radial_solutions}, a radial boundary blow-up solution has the form $u_a(x)=\varphi_a(|x|)$, where $\varphi_a$ is a positive solution of \eqref{CP} satisfying $\varphi_a^\prime(r)>0$, $\varphi_a^{\prime\prime}(r)\geq0$, and $\lim\limits_{r\to R^-}\varphi_a(r)=+\infty$.

We first prove (i), where $q<2$.

\underline{\textit{Sufficiency.}} Assume that \eqref{KO_4} holds. Let $\varphi_a$ be the local solution given by Theorem \ref{Th3.1}, and let $[0,R)$ be its maximal interval of existence. Since $\Psi_k(a)<+\infty$, condition \eqref{KO_1} fails, and Theorem \ref{Th4.1} gives $R<+\infty$. If $\varphi_a$ remained bounded, \eqref{1prime_k_upper} would also bound $\varphi_a^\prime$, contradicting Remark \ref{Rem3.1}. Hence, $\lim\limits_{r\to R^-}\varphi_a(r)=+\infty$
and $u_a(x)=\varphi_a(|x|)$ is the required solution in $B_R$.

\underline{\textit{Necessity.}} Let $u_a(x)=\varphi_a(|x|)$ be a positive radial admissible boundary blow-up solution in $B_R$ with center value $a$. The estimates \eqref{1prime_k_upper} and \eqref{1prime_k_lower} give
$$
C_2\leq\frac{\varphi_a^\prime(r)}{\left(\int_a^{\varphi_a(r)}f(t)dt\right)^{\frac12}+\left(\int_a^{\varphi_a(r)}g(t)dt\right)^{\frac1{2-q}}}\leq C_1,\quad 0<r<R.
$$
Integrating from $0$ to $r$, we obtain
$$
C_2r\leq\int_a^{\varphi_a(r)}\frac{dt}{\left(\int_a^t f(s)ds\right)^{\frac12}+\left(\int_a^t g(s)ds\right)^{\frac1{2-q}}}\leq C_1r.
$$
Letting $r\to R^-$ yields $C_2R\leq\Psi_k(a)\leq C_1R$. Thus \eqref{KO_4} holds, and
$$
\frac{\Psi_k(a)}{C_1}\leq R\leq\frac{\Psi_k(a)}{C_2}.
$$
This proves (i).

We next prove (ii), where $q=2$. In this case, the hypotheses imply that $f(t)>0$ for all $t>0$.

\underline{\textit{Necessity.}} Let $u_a(x)=\varphi_a(|x|)$ be a positive radial admissible boundary blow-up solution in $B_R$. Set
$$
\lambda_1:=2c_3^{-\frac1k}C_0.
$$
The upper estimate \eqref{1prime_k_upper} gives
$$
\frac{\varphi_a^\prime(r)}{\left(\int_a^{\varphi_a(r)}f(t)e^{\lambda_1\int_t^{\varphi_a(r)}g(\tau)d\tau}dt\right)^{\frac12}}\leq\sqrt{\lambda_1},\quad 0<r<R.
$$
Integrating from $0$ to $r$ and letting $r\to R^-$, we obtain
\begin{equation}\label{critical_A_lower_radius}
\Psi_k^{(\lambda_1)}(a)\leq\sqrt{\lambda_1}R<+\infty.
\end{equation}
The parameter comparison in \cite[Lemma 2.4~(ii)]{JJL2026} then yields \eqref{KO_5}.

Assume additionally that $\Psi_k^{(0)}(a)<+\infty$. Convexity of $\varphi_a$ and monotonicity of $f$ give
$\int_{\varphi_a(\frac r2)}^{\varphi_a(r)}f(t)dt\geq\frac12\int_a^{\varphi_a(r)}f(t)dt$.
Since $g\geq0$, the lower estimate \eqref{1prime_k_lower} implies
\begin{equation}\label{6.1}
\varphi_a^\prime(r)\geq\gamma_1\left(\int_a^{\varphi_a(r)}f(t)dt\right)^{\frac12},\quad 0<r<R,
\end{equation}
where $\gamma_1:=2^{-(n+k-2)}\left(\frac{kC_0}{n-1}\right)^{\frac12}$. Integrating from $0$ to $r$ yields
\begin{equation}\label{6.2}
\gamma_1r\leq\int_a^{\varphi_a(r)}\frac{dt}{\left(\int_a^t f(s)ds\right)^{\frac12}},\quad 0<r<R.
\end{equation}
Letting $r\to R^-$, combining with \eqref{critical_A_lower_radius}, we obtain
$$
\frac{\Psi_k^{(\lambda_1)}(a)}{\sqrt{\lambda_1}}\leq R\leq\frac{\Psi_k^{(0)}(a)}{\gamma_1}.
$$

\underline{\textit{Sufficiency.}} Assume that \eqref{KO_5} holds. Let $\varphi_a$ be the local solution given by Theorem \ref{Th3.1}, with maximal interval $[0,R_*)$, where $R_*\in(0,+\infty]$.
If $g\equiv0$ on $[a,+\infty)$, then $\Psi_k^{(\lambda)}(a)=\Psi_k^{(0)}(a)<+\infty$. Applying \eqref{6.2} on the interval of existence gives
$$
\gamma_1r\leq\Psi_k^{(0)}(a),\quad 0<r<R_*,
$$
and hence $R_*<+\infty$.

Suppose instead that $g\not\equiv0$ on $[a,+\infty)$. Choose $T>a$ such that $m:=g(T)>0$. If $R_*=+\infty$, positivity and monotonicity of $\varphi_a^\prime$ imply that $\varphi_a(r)\to+\infty$ as $r\to+\infty$. We may therefore choose $r_0>0$ such that
$$
\varphi_a\left(\frac{r_0}{2}\right)\geq T\quad\text{and}\quad\varphi_a(r_0)\geq a+\frac{(n-1)\log2}{kmC_0}.
$$
For $r\geq r_0$ and $t\in[\varphi_a(\frac r2),\varphi_a(r)]$, we have $f(t)\geq f(T)>0$ and $g(\tau)\geq m$ for $\tau\in[t,\varphi_a(r)]$. Thus \eqref{1prime_k_lower} gives
\begin{align*}
\varphi_a^\prime(r)&\geq2^{-(n+k-2)}\left(\frac{2kC_0}{n-1}\int_{\varphi_a(\frac r2)}^{\varphi_a(r)}f(t)e^{\frac{2kC_0}{n-1}\int_t^{\varphi_a(r)}g(\tau)d\tau}dt\right)^{\frac12}\\
&\geq2^{-(n+k-2)}\left(\frac{2kC_0f(T)}{n-1}\int_{\varphi_a(\frac r2)}^{\varphi_a(r)}e^{\frac{2kmC_0}{n-1}(\varphi_a(r)-t)}dt\right)^{\frac12}\\
&=2^{-(n+k-2)}\left(\frac{f(T)}m\right)^{\frac12}\left(e^{\frac{2kmC_0}{n-1}\left(\varphi_a(r)-\varphi_a(\frac r2)\right)}-1\right)^{\frac12}.
\end{align*}
Convexity gives $\varphi_a(r)-\varphi_a(r/2)\geq(\varphi_a(r)-a)/2$, and the choice of $r_0$ ensures that $e^{\frac{kmC_0}{n-1}(\varphi_a(r)-a)}\geq2$ for $r\geq r_0$. Consequently,
\begin{align*}
\varphi_a^\prime(r)\geq2^{-(n+k-\frac32)}\left(\frac{f(T)}m\right)^{\frac12}e^{\frac{kmC_0}{2(n-1)}(\varphi_a(r)-a)}=:A_1e^{A_2\varphi_a(r)},
\end{align*}
where $A_1:=2^{-(n+k-\frac32)}\left(\frac{f(T)}m\right)^{\frac12}e^{-\frac{kmC_0}{2(n-1)}a}>0$ and $A_2:=\frac{kmC_0}{2(n-1)}>0$. Integrating from $r_0$ to $r>r_0$, we obtain
$$
r-r_0\leq\int_{\varphi_a(r_0)}^{\varphi_a(r)}\frac{dt}{A_1e^{A_2t}}\leq\int_{\varphi_a(r_0)}^{+\infty}\frac{dt}{A_1e^{A_2t}}=\frac{e^{-A_2\varphi_a(r_0)}}{A_1A_2}<+\infty.
$$
Letting $r\to+\infty$ gives a contradiction. Hence $R_*<+\infty$.

In either case, the upper estimate \eqref{1prime_k_upper} and Remark \ref{Rem3.1} imply $\lim\limits_{r\to R_*^-}\varphi_a(r)=+\infty$.
Thus $u_a(x)=\varphi_a(|x|)$ is the required solution in $B_{R_*}$. Setting $R:=R_*$ completes the proof of (ii).

Finally, Theorem \ref{Th3.1} and Remark \ref{Rem2.2} give $u_a\in C^2(B_R)$ when $q\geq0$. When $q<0$, they yield an admissible strong solution satisfying $u_a\in  W_{\mathrm{loc}}^{2,\delta}(B_R)$ for $\delta\in[1,1-\frac1q)$. The proof is complete.
\end{proof}

\begin{remark}\label{Rem6.criticalA}
Under the hypotheses of Theorem \ref{Th1.3} with $q=2$, we have $f(t)>0$ for all $t>0$. For each $a>0$, condition \eqref{KO_5} is equivalent to
\begin{equation*}
\Psi_k^{(0)}(a)=\int_a^{+\infty}\frac{dt}{\left(\int_a^t f(s)ds\right)^{\frac12}}<+\infty\quad\text{or}\quad\lim\limits_{t\to+\infty}g(t)>0.
\end{equation*}
Indeed, nonnegativity and monotonicity of $g$ imply that either $g\equiv0$ on $[a,+\infty)$ or $\lim\limits_{t\to+\infty}g(t)>0$. In the first case, $\Psi_k^{(\lambda)}(a)=\Psi_k^{(0)}(a)$ for every $\lambda>0$.
In the second case, choose $T>a$ with $g(T)>0$. For $t\geq T$ and $\lambda>0$,
$$
\int_a^t f(s)e^{\lambda\int_s^t g(\tau)d\tau}ds\geq e^{\lambda g(T)(t-T)}\int_a^T f(s)ds.
$$
Consequently,
$$
\int_T^{+\infty}\frac{dt}{\left(\int_a^t f(s)e^{\lambda\int_s^t g(\tau)d\tau}ds\right)^{\frac12}}\leq\left(\int_a^T f(s)ds\right)^{-\frac12}\int_T^{+\infty}e^{-\frac{\lambda g(T)}2(t-T)}dt<+\infty.
$$
Together with local integrability at $t=a$, this gives $\Psi_k^{(\lambda)}(a)<+\infty$ for every $\lambda>0$.
\end{remark}

\begin{proof}[Proof of Theorem \ref{Th1.4}]
We first prove (i). If $\widetilde\Psi_k(a)<+\infty$, condition \eqref{KO_2} fails. Theorem \ref{Th4.1-2}~(i) therefore implies that a maximal extension of the local solution from Theorem \ref{Th3.1-2} has finite radius $R$. The upper estimate \eqref{1prime_k_upper_2} and Remark \ref{Rem3.1} then give $\varphi_a(r)\to+\infty$ as $r\to R^-$. Conversely, integration of \eqref{1prime_k_upper_2} along any such solution yields $\widetilde\Psi_k(a)\leq C_1R<+\infty$. If $g(a)\geq0$, then $g=g^+$ along the solution, and \eqref{1prime_k_lower_2} similarly gives $C_2R\leq\widetilde\Psi_k(a)$. This proves (i), including the radius estimates.

We next prove (ii). Let $q=2$ and $\lim\limits_{t\to+\infty}g(t)>0$. Let $\varphi_a$ be a maximal extension of the local solution given by Theorem \ref{Th3.1-2}, and suppose that it is global. Then $\varphi_a(r)\to+\infty$ as $r\to+\infty$. Choose $T>a$ with $g(T)>0$ and $r_0>0$ such that $A:=\varphi_a(r_0)\geq T$. For $r>r_0$, set $m:=\frac{r_0+r}2$. On $[m,r]$, we have $f(\varphi_a)\geq f(T)>0$ and $g(\varphi_a)\geq g(T)>0$. Thus \eqref{1prime_k_lower_2} gives
$$
\varphi_a^\prime(r)\geq2^{-(n+k-2)}\left(\frac{f(T)}{g(T)}\right)^{\frac12}\left[e^{\frac{2kC_0g(T)}{n-1}(\varphi_a(r)-\varphi_a(m))}-1\right]^{\frac12}.
$$
Convexity implies $\varphi_a(r)-\varphi_a(m)\geq(\varphi_a(r)-A)/2$. Hence there exist $c,C>0$ and $r_1>r_0$ such that
$$
\varphi_a^\prime(r)\geq Ce^{c\varphi_a(r)},\quad r\geq r_1.
$$
Integration yields $r-r_1\leq C^{-1}\int_{\varphi_a(r_1)}^{+\infty}e^{-ct}dt$ for all $r\geq r_1$, a contradiction as $r\to+\infty$. The maximal radius is therefore finite. Since \eqref{1prime_k_upper_2} bounds $\varphi_a^\prime$ whenever $\varphi_a$ is bounded, Remark \ref{Rem3.1} gives boundary blow-up.

Assume additionally that $g(a)\geq0$ and $\Psi_k^{(0)}(a)<+\infty$, and let $u_a$ be any such solution in $B_R$. Monotonicity gives $g(t)=g^+(t)$ for $t\geq a$. Applying \eqref{1prime_k_upper_2} and \eqref{1prime_k_lower_2} as in the proof of Theorem \ref{Th1.3}~(ii), we obtain
$$
\frac{\widetilde\Psi_k^{(\lambda_1)}(a)}{\sqrt{\lambda_1}}\leq R\leq\frac{\Psi_k^{(0)}(a)}{\gamma_1},
$$
where $\lambda_1$ and $\gamma_1$ are defined in Theorem \ref{Th1.3}~(ii). This proves (ii).

Finally, we prove (iii). Let $0<q\leq2$ and $\lim\limits_{t\to+\infty}g(t)\leq0$. Monotonicity implies that $g(t)\leq0$ for all $t\in\mathbb R$.

\underline{\textit{Necessity.}} Let $u_a(x)=\varphi_a(|x|)$ be a positive radial admissible classical boundary blow-up solution in $B_R$. With $\lambda_1:=2c_3^{-\frac1k}C_0$, the upper estimate \eqref{1prime_k_upper_uni} gives
$$
\frac{\varphi_a^\prime(r)}{\left(\int_a^{\varphi_a(r)}f(t)e^{-\lambda_1\int_t^{\varphi_a(r)}P(\tau)d\tau}dt\right)^{\frac12}}\leq\sqrt{\lambda_1},\quad 0<r<R.
$$
Integrating from $0$ to $r$ and letting $r\to R^-$, we obtain
\begin{equation}\label{6.3}
\widetilde\Psi_k^{(-\lambda_1)}(a)\leq\sqrt{\lambda_1}R<+\infty.
\end{equation}
The parameter comparison in \cite[Lemma 2.4~(i)]{JJL2026} then yields \eqref{KO_7}.

\underline{\textit{Sufficiency.}} Assume that \eqref{KO_7} holds. Let $\varphi_a$ be the local solution given by Theorem \ref{Th3.1-2}, with maximal interval $[0,R_*)$, where $R_*\in(0,+\infty]$. The parameter comparisons imply that \eqref{KO_3} fails, and Theorem \ref{Th4.1-2}~(ii) gives $R_*<+\infty$. If $\varphi_a$ remained bounded, \eqref{1prime_k_upper_uni} would also bound $\varphi_a^\prime$, contradicting Remark \ref{Rem3.1}. Hence, $\lim\limits_{r\to R_*^-}\varphi_a(r)=+\infty$ and $u_a(x)=\varphi_a(|x|)$ is the required boundary blow-up solution in $B_{R_*}$.

To establish the radius estimates, let $u_a$ be any such solution in $B_R$. Set $\lambda_q:=\frac{qkC_0}{n-1}$ and $\gamma_q:=2^{-(n+k-2)}\left(\frac{qkC_0}{2(n-1)}\right)^{\frac12}$.
Since $\lambda_q\leq\lambda_1$ and $\widetilde\Psi_k^{(-\lambda)}(a)$ is non-decreasing in $\lambda$, the finiteness of $\widetilde\Psi_k^{(-\lambda_1)}(a)$ implies that of $\widetilde\Psi_k^{(-\lambda_q)}(a)$. The lower estimate \eqref{1prime_k_lower_uni} gives
$$
\frac{\varphi_a^\prime(r)}{\left(\int_a^{\varphi_a(r)}f(t)e^{-\lambda_q\int_t^{\varphi_a(r)}P(\tau)d\tau}dt\right)^{\frac12}}\geq\gamma_q,\quad 0<r<R.
$$
Integrating from $0$ to $r$ and letting $r\to R^-$, combined with \eqref{6.3}, we obtain
$$
\frac{\widetilde\Psi_k^{(-\lambda_1)}(a)}{\sqrt{\lambda_1}}\leq R\leq\frac{\widetilde\Psi_k^{(-\lambda_q)}(a)}{\gamma_q}.
$$
This completes the proof of (iii).

In all three cases, $q>0$ and $f(a)>0$. Theorem \ref{Th3.1-2}, Lemma \ref{Lem2.2}~(i), and the radial regularity statement in Remark \ref{Rem2.2} therefore give $u_a\in C^2(B_R)$.
\end{proof}

\begin{remark}\label{Rem6.criticalB}
Under the hypotheses of Theorem \ref{Th1.4}~(ii), we have $\lim\limits_{t\to+\infty}g^+(t)>0$. Applying Remark \ref{Rem6.criticalA} with $g^+$ in place of $g$ gives
$$
\widetilde\Psi_k^{(\lambda)}(a)<+\infty,\quad\text{for any }\lambda>0.
$$
Thus the weighted integrability condition holds automatically and need not be imposed separately.
\end{remark}

\subsection{Upper Bounds for Blow-Up Profiles}

We now prove the pointwise upper bounds stated in Theorems \ref{Th6.profileA} and \ref{Th6.profileB}.
We first establish the properties of the Keller--Osserman functions needed for the inverse functions used in the blow-up profile estimates.

\begin{lemma}\label{Lem6.KO_inverse}
Fix $a>0$. The following assertions hold.

\begin{enumerate}[label=(\roman*)]
\item Assume that $f$ and $g$ satisfy condition (A), let $q<2$, and suppose that either $f(t)>0$ for all $t>0$ or $q<1$. If condition (C1-A) holds and $\Psi_k(a)<+\infty$, then $\Psi_k:(0,+\infty)\to(0,+\infty)$ is continuous and strictly decreasing and satisfies
$
\lim\limits_{b\to+\infty}\Psi_k(b)=0.
$

\item Assume that $f$ and $g$ satisfy condition (B) and let $0<q<2$. If condition (C1-B) holds and
$\widetilde\Psi_k(a)<+\infty$, then $\widetilde\Psi_k:(0,+\infty)\to(0,+\infty)$ is continuous and strictly decreasing and satisfies
$
\lim\limits_{b\to+\infty}\widetilde\Psi_k(b)=0.
$

\item Suppose that $f$ is nonnegative on $[0,+\infty)$ and that condition (C2) holds. If $\Psi_k^{(0)}(a)<+\infty$,
then $\Psi_k^{(0)}:(0,+\infty)\to(0,+\infty)$ is continuous and strictly decreasing and satisfies
$
\lim\limits_{b\to+\infty}\Psi_k^{(0)}(b)=0.
$
Since $\widetilde\Psi_k^{(0)}(b)=\Psi_k^{(0)}(b)$ for all $b>0$, the same conclusions hold for $\widetilde\Psi_k^{(0)}$.

\item Assume that $f$ and $g$ satisfy condition (B), let $0<q\leq2$, and fix $\lambda>0$. If condition (C2) holds and $\widetilde\Psi_k^{(-\lambda)}(a)<+\infty$, then $\widetilde\Psi_k^{(-\lambda)}:(0,+\infty)\to(0,+\infty)$
is continuous and strictly decreasing and satisfies
$
\lim\limits_{b\to+\infty}\widetilde\Psi_k^{(-\lambda)}(b)=0.
$
\end{enumerate}
\end{lemma}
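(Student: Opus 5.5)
We treat all four parts with one scheme. Write each Keller--Osserman function as $\Psi(b)=\int_b^{+\infty}\frac{dt}{D_b(t)}$, where $D_b(t)$ is the relevant denominator built from integrals over $[b,t]$. The substitution $t=b+s$ gives $\Psi(b)=\int_0^{+\infty}\frac{ds}{D_b(b+s)}$. We then establish three properties in order: finiteness for every $b>0$, strict monotonicity together with continuity, and decay to $0$ as $b\to+\infty$.

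\emph{Monotonicity and finiteness.} For (i)--(iii), non-decreasing coefficients give $\int_b^{b+s}f\ge\int_{b'}^{b'+s}f$ for $b>b'$, and likewise for $g$ or $g^+$. Hence $D_b(b+s)$ is non-decreasing in $b$ and $\Psi$ is non-increasing. By (C1-A), (C1-B) or (C2), one of the coefficients is strictly increasing, so the inequality is strict for every $s>0$, which yields strict decrease wherever $\Psi$ is finite. Since $\Psi(a)<+\infty$ and $\Psi$ is non-increasing, $\Psi(b)<+\infty$ for $b\ge a$.

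For $b<a$ we split $\int_b^{+\infty}=\int_b^{a+1}+\int_{a+1}^{+\infty}$. On the first piece, near $t=b$ the denominator is at least $c(t-b)^{1/2}$ or $c(t-b)^{1/(2-q)}$ with $c>0$, since $f+g>0$ on $(0,+\infty)$. The exponent $1/(2-q)<1$ for $q<1$, and when $q\in[1,2)$ we use $f>0$ with exponent $\tfrac12$; in either case the piece is integrable. On the second piece, $D_b(t)\ge D_a(t)$, because the integrals over $[b,t]$ dominate those over $[a,t]$.

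For (iv) the weight $e^{-\lambda\int_s^tP}$ must be handled separately. Here $P$ is non-increasing, so with $s=b+\sigma$ and $t=b+\tau$ we have $\int_{b+\sigma}^{b+\tau}P(\cdot)$ non-increasing in $b$. Together with $f$ strictly increasing, this shows that $\int_b^t fe^{-\lambda\int P}$, expressed in the shifted variables, is strictly increasing in $b$. The remaining arguments for (iv) are the same as above.

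\emph{Continuity.} In shifted variables the integrand $1/D_b(b+s)$ is continuous in $b$ for each $s>0$, by continuity of $f$, $g$ and $P$. For $b\in[b_0,b_1]$ it is dominated by $1/D_{b_0}(b_0+s)$, which is integrable because $\Psi(b_0)<\infty$. Dominated convergence then gives continuity.

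\emph{Limit.} By (C1)/(C2), $f+g$ (respectively $f+g^+$, respectively $f$) tends to $+\infty$. In (i)--(iii), for large $b$ and every $s$ this gives $D_b(b+s)\ge D_{a}(a+s)$ together with $D_b(b+s)\ge \min\{(Ms)^{1/2},(Ms)^{1/(2-q)}\}$, where $M=\min(f(b),g(b))$ or the larger of the two as appropriate. Since $f(b)+g(b)\to\infty$, at least one of the terms $(f(b)s)^{1/2}$ or $(g(b)s)^{1/(2-q)}$ blows up. Hence the integrand tends to $0$ pointwise, it is dominated by the integrable $1/D_a(a+s)$, and dominated convergence yields $\Psi(b)\to0$.

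\emph{Main obstacle.} We expect (iv) to be the hardest step, because $P$ does not decay and the exponential weight could prevent pointwise decay. There we would first use $\int_b^{b+\tau} fe^{-\lambda\int P}\ge f(b)\int_0^\tau e^{-\lambda P(b)(\tau-\sigma)}d\sigma$, using that $P$ is non-increasing. If $P(b)$ stays bounded away from $0$, this lower bound tends to $f(b)/(\lambda P(b))$. We note, however, that $P(b)\le (g^-)^{2/q}f^{1-2/q}$ with $f\to\infty$ and $g^-$ non-increasing, so $P(b)\to0$ when $q<2$ and is bounded when $q=2$. In both cases $f(b)/P(b)\to\infty$ and the integrand tends to $0$ pointwise, so dominated convergence again gives the limit.
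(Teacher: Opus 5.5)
Your argument is correct. The paper omits this proof as trivial, and your combination of the shift $t=b+s$, monotonicity of the coefficients, and dominated convergence (for continuity and for the limit, with domination by the integrand at $b_0$ or at $a$) is exactly the routine verification being left to the reader.

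Two small slips should be tidied. First, in the limit step take $M=\max\{f(b),g(b)\}$: with the minimum the bound can be vacuous, for example when $g\equiv0$. Second, in (iv) the lower bound $f(b)(1-e^{-\lambda P(b)\tau})/(\lambda P(b))$ does not tend to $f(b)/(\lambda P(b))$ for fixed $\tau$. It does tend to $+\infty$ simply because $P$ is non-increasing, so the factor $(1-e^{-\lambda P(b)\tau})/(\lambda P(b))$ is bounded below uniformly for $b\geq b_0$, and no case distinction in $q$ is needed.
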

The proof is trivial and is therefore omitted.

\begin{proof}[Proof of Theorem \ref{Th6.profileA}]
We first prove (i). For $q<2$, the lower estimate \eqref{1prime_k_lower} gives
$$
\varphi_a^\prime(\rho)\geq C_2\left[\left(\int_a^{\varphi_a(\rho)}f(t)dt\right)^{\frac12}+\left(\int_a^{\varphi_a(\rho)}g(t)dt\right)^{\frac1{2-q}}\right],\quad r<\rho<R.
$$
Integrating from $r$ to $R$ and using $\varphi_a(\rho)\to+\infty$ as $\rho\to R^-$, we obtain
\begin{equation}\label{7.1}
\begin{aligned}
C_2d(x)&\leq\int_{\varphi_a(r)}^{+\infty}\frac{dt}{\left(\int_a^t f(s)ds\right)^{\frac12}+\left(\int_a^t g(s)ds\right)^{\frac1{2-q}}}\\
&\leq\int_{\varphi_a(r)}^{+\infty}\frac{dt}{\left(\int_{\varphi_a(r)}^t f(s)ds\right)^{\frac12}+\left(\int_{\varphi_a(r)}^t g(s)ds\right)^{\frac1{2-q}}}=\Psi_k(\varphi_a(r)).
\end{aligned}
\end{equation}
Theorem \ref{Th1.3} gives $\Psi_k(a)<+\infty$. Applying Lemma \ref{Lem6.KO_inverse}~(i) and inverting \eqref{7.1} yields
$$
u_a(x)=\varphi_a(r)\leq\Psi_k^{-1}\bigl(C_2d(x)\bigr),
$$
which proves (i).

For (ii), let $q=2$. Since $\varphi_a^\prime$ is non-decreasing on $(0,R)$, \eqref{6.1} yields
$$
\varphi_a^\prime(\rho)\geq\gamma_1\left(\int_a^{\varphi_a(\rho)}f(t)dt\right)^{\frac12},\quad r<\rho<R.
$$
Integrating from $r$ to $R$, we obtain
\begin{equation*}
\begin{aligned}
\gamma_1d(x)\leq\int_{\varphi_a(r)}^{+\infty}\frac{dt}{\left(\int_a^t f(s)ds\right)^{\frac12}}
\leq\int_{\varphi_a(r)}^{+\infty}\frac{dt}{\left(\int_{\varphi_a(r)}^t f(s)ds\right)^{\frac12}}=\Psi_k^{(0)}(\varphi_a(r)).
\end{aligned}
\end{equation*}
Lemma \ref{Lem6.KO_inverse}~(iii) therefore yields
$$
u_a(x)=\varphi_a(r)\leq\left(\Psi_k^{(0)}\right)^{-1}\bigl(\gamma_1d(x)\bigr),
$$
proving (ii).
\end{proof}

\begin{proof}[Proof of Theorem \ref{Th6.profileB}]
For (i) and (ii), set $t_0:=\inf\{t>0:g(t)\geq0\}$. Since $g$ is non-decreasing and $\lim\limits_{t\to+\infty}g(t)>0$, we have $t_0<+\infty$ and
$$
g(t)=g^+(t)\geq0,\quad t>t_0.
$$
If $\varphi_a(r)\leq t_0$, then (i)  and (ii) are immediately proved.
Suppose that $\varphi_a(r)>t_0$. Then
$$
g(\varphi_a(\rho))=g^+(\varphi_a(\rho))\geq0,\quad r\leq\rho<R.
$$

For (i), apply \eqref{1prime_k_lower_2} on $[r,R)$ to obtain
$$
\varphi_a^\prime(\rho)\geq C_2\left[\left(\int_{\varphi_a(r)}^{\varphi_a(\rho)}f(t)dt\right)^{\frac12}+\left(\int_{\varphi_a(r)}^{\varphi_a(\rho)}g^+(t)dt\right)^{\frac1{2-q}}\right],\quad r<\rho<R.
$$
Integrating from $r$ to $R$ gives
$$
C_2d(x)\leq\int_{\varphi_a(r)}^{+\infty}\frac{dt}{\left(\int_{\varphi_a(r)}^t f(s)ds\right)^{\frac12}+\left(\int_{\varphi_a(r)}^t g^+(s)ds\right)^{\frac1{2-q}}}=\widetilde\Psi_k(\varphi_a(r)).
$$
By Theorem \ref{Th1.4}~(i), $\widetilde\Psi_k(a)<+\infty$. Lemma \ref{Lem6.KO_inverse}~(ii) then yields
$$
\varphi_a(r)\leq\widetilde\Psi_k^{-1}\bigl(C_2d(x)\bigr).
$$
Together with the case $\varphi_a(r)\leq t_0$, this proves (i).

For (ii), the estimate \eqref{1prime_k_lower_2} for $q=2$ gives
$$
\varphi_a^\prime(\rho)\geq\gamma_1\left(\int_{\varphi_a(r)}^{\varphi_a(\rho)}f(t)dt\right)^{\frac12},\quad r<\rho<R.
$$
Hence,
$$
\gamma_1d(x)\leq\int_{\varphi_a(r)}^{+\infty}\frac{dt}{\left(\int_{\varphi_a(r)}^t f(s)ds\right)^{\frac12}}=\Psi_k^{(0)}(\varphi_a(r)).
$$
By Lemma \ref{Lem6.KO_inverse}~(iii),
$$
\varphi_a(r)\leq\left(\Psi_k^{(0)}\right)^{-1}\bigl(\gamma_1d(x)\bigr).
$$
Combining the two cases proves (ii).

Finally, let $0<q\leq2$ and $\lim\limits_{t\to+\infty}g(t)\leq0$. The lower estimate \eqref{1prime_k_lower_uni} gives
$$
\varphi_a^\prime(\rho)\geq\gamma_q\left(\int_a^{\varphi_a(\rho)}f(t)e^{-\lambda_q\int_t^{\varphi_a(\rho)}P(\tau)d\tau}dt\right)^{\frac12},\quad r<\rho<R.
$$
Integrating from $r$ to $R$, we obtain
\begin{align*}
\gamma_qd(x)\leq\int_{\varphi_a(r)}^{+\infty}\frac{dt}{\left(\int_{\varphi_a(r)}^t f(s)e^{-\lambda_q\int_s^tP(\tau)d\tau}ds\right)^{\frac12}}=\widetilde\Psi_k^{(-\lambda_q)}(\varphi_a(r)).
\end{align*}
By Theorem \ref{Th1.4}~(iii), $\widetilde\Psi_k^{(-\lambda_q)}(a)<+\infty$. Applying Lemma \ref{Lem6.KO_inverse}~(iv), we conclude that
$$
u_a(x)=\varphi_a(r)\leq\left(\widetilde\Psi_k^{(-\lambda_q)}\right)^{-1}\bigl(\gamma_qd(x)\bigr).
$$
This proves (iii).
\end{proof}

\section{Further Remarks}\label{Section7}

We conclude by discussing equivalent formulations of the Keller--Osserman conditions, their distinction from the determinant case, and the influence of the gradient term on global solvability and regularity at the origin. We also clarify the methodological significance of the implicit radial analysis.

\begin{remark}[Equivalent integral conditions]\label{Rem7.KO_equivalent}
Fix $a>0$. Since the following conditions concern convergence at infinity, their finite lower limits may be replaced by any fixed positive number.

\begin{enumerate}[label=(\roman*)]
\item Let $q<2$. If $h$ is nonnegative and non-decreasing, then, for $t\geq2a$, $$\frac t2h\left(\frac t2\right)\leq\int_a^t h(s)ds\leq th(t).$$
Consequently, under condition (A),
\begin{equation}\label{KO_pointwise_A}
\eqref{KO_1}\text{ holds}\iff\int_a^{+\infty}\frac{dt}{(tf(t))^{\frac12}+(tg(t))^{\frac1{2-q}}}=+\infty,
\end{equation}
and $\Psi_k(a)<+\infty$ if and only if the integral in \eqref{KO_pointwise_A} is finite. Under condition (B), if $0<q<2$ and $\lim\limits_{t\to+\infty}g(t)>0$, then
\begin{equation}\label{KO_pointwise_B_positive}
\eqref{KO_2}\text{ holds}\iff\int_a^{+\infty}\frac{dt}{(tf(t))^{\frac12}+(tg^+(t))^{\frac1{2-q}}}=+\infty,
\end{equation}
and $\widetilde\Psi_k(a)<+\infty$ if and only if the integral in \eqref{KO_pointwise_B_positive} is finite.

\item Let $q=2$ and suppose that $f(t)>0$ for all $t>0$. First,
\begin{equation*}
\Psi_k^{(0)}(a)<+\infty\iff\int_a^{+\infty}\frac{dt}{(tf(t))^{\frac12}}<+\infty.
\end{equation*}
Under condition (A), \cite[Lemma 2.4~(ii)]{JJL2026} and Remark \ref{Rem6.criticalA} give
\begin{align*}
\Psi_k^{(\lambda)}(a)<+\infty \text{ for some }\lambda>0&\iff\Psi_k^{(\lambda)}(a)<+\infty \text{ for any }\lambda>0\\
&\iff\Psi_k^{(0)}(a)<+\infty\text{ or }\lim\limits_{t\to+\infty}g(t)>0.
\end{align*}

\item Assume condition (B), let $0<q\leq2$, and suppose that $\lim\limits_{t\to+\infty}g(t)\leq0$. Fix $\lambda>0$.
Since $f$ is positive and non-decreasing and $P$ is nonnegative and non-increasing, for $t\geq2a$,
\begin{equation*}
\frac12e^{-\frac\lambda2}f\left(\frac t2\right)\min\left\{t,\frac1{P(\frac t2)}\right\}\leq J_{\lambda,a}^-(t)\leq\max\{1,\lambda^{-1}\}f(t)\min\left\{t,\frac1{P(t)}\right\},
\end{equation*}
where $J_{\lambda,a}^-$ is defined by \eqref{weighted_J_k} with $b=a$, and $1/P(t)=+\infty$ when $P(t)=0$. Since
$$
\frac1{\left[f(t)\min\{t,P(t)^{-1}\}\right]^{\frac12}}=\max\left\{\frac1{(tf(t))^{\frac12}},\left(\frac{g^-(t)}{f(t)}\right)^{\frac1q}\right\},
$$
a change of variables and comparison of the maximum with the sum yield
\begin{equation}\label{KO_negative_divergence_equivalent}
\eqref{KO_3}\text{ holds}\iff\int_a^{+\infty}\left[\frac1{(tf(t))^{\frac12}}+\left(\frac{g^-(t)}{f(t)}\right)^{\frac1q}\right]dt=+\infty.
\end{equation}
The same comparison shows that $\widetilde\Psi_k^{(-\lambda)}(a)<+\infty$ if and only if the integral in \eqref{KO_negative_divergence_equivalent} is finite. In particular, the convergence or divergence is independent of $\lambda>0$; see also \cite[Lemma 2.4~(i)]{JJL2026}.

If, in addition, $\lim\limits_{t\to+\infty}g(t)<0$, then $g^-(t)$ is bounded above and below by positive constants for all sufficiently large $t$, and hence
\begin{equation}\label{KO_strictly_negative_equivalent}
\eqref{KO_3}\text{ holds}\iff\int_a^{+\infty}\left[\frac1{(tf(t))^{\frac12}}+\frac1{(f(t))^{\frac1q}}\right]dt=+\infty.
\end{equation}
In particular, if $q=2$, condition \eqref{KO_strictly_negative_equivalent} reduces to
$$
\int_a^{+\infty}\frac{dt}{(f(t))^{\frac12}}=+\infty.
$$
\end{enumerate}
\end{remark}

\begin{remark}[An approach to implicit radial equations]\label{Rem7.5}
For $2\leq k<n$, the argument treats the implicit radial equation directly. Strict monotonicity with respect to $\varphi^{\prime\prime}$ permits inversion, and the monotonicity of $f$ and $g$ yields a cooperative first-order system. The method of sub- and supersolutions then establishes the local existence of immediately increasing solutions without requiring local Lipschitz continuity of the coefficients. The scaling arguments in Lemma \ref{Lem2.2}~(i)--(ii) determine the limits at the origin in the regular cases; analogous arguments yield the refined singular behavior under the additional hypotheses of Remark \ref{Rem2.2-iv}.

The derivative estimates establish a connection with the radial Laplacian. For an increasing convex solution, the positivity of $c_3$ and Young's inequality applied to the two factors in \eqref{sODE} yield
$$
c_3^{\frac1k}\varphi^{\prime\prime}(r)\leq C_0h(r)\leq\frac{n-1}{k}\left[\varphi^{\prime\prime}(r)+(n+k-2)\frac{\varphi^\prime(r)}r\right],
$$
as in \eqref{2prime_upper} and \eqref{2prime_lower_qneg}. Integration of these differential inequalities leads to the powers $\frac12$ and $\frac1{2-q}$ in the Keller--Osserman conditions for $q<2$. The equivalence principle in Theorem \ref{Th6.1} then transfers the radial existence conditions to entire subsolutions without a radial symmetry assumption. These arguments provide a framework for deriving necessary and sufficient growth conditions for the present nondivergence-form operators, and may guide further developments of Keller--Osserman theory for related fully nonlinear equations.
\end{remark}

\begin{remark}[Comparison with the determinant case]\label{Rem7.1}
For every $1\leq k<n$, the sharp Keller--Osserman conditions obtained in this paper have the same form. In particular, the subcritical exponents $\frac12$ and $\frac1{2-q}$ and the function $P$ in \eqref{P} are independent of $k$. This common structure relies on $c_3=n-k>0$. When $k=n$, this coefficient vanishes, and the determinant case treated in Part I \cite{JJL2026} has different integral exponents and the structural critical value $2-\frac1n$ instead of $2$.

The following logarithmic nonlinearities illustrate this distinction. Let $\beta\geq0$.
\begin{enumerate}[label=(\roman*)]
\item For every $1\leq k<n$, the gradient-independent equation
$$
S_k^{\frac1k}[\Delta uI-D^2u]=u(\ln(1+u))^\beta\quad\text{in }\mathbb R^n
$$
admits a positive nontrivial admissible classical entire subsolution if and only if $\beta\leq2$. When $k=n$, the condition is $\beta\leq2-\frac1n$.

\item Let $0<q\leq1$. For every $1\leq k<n$, the equation
$$
S_k^{\frac1k}[\Delta uI-D^2u]=u^{1-q}(\ln(1+u))^\beta(1+|Du|^q)\quad\text{in }\mathbb R^n
$$
admits a positive nontrivial admissible classical entire subsolution if and only if $\beta\leq2-q$. For the corresponding determinant equation, the condition is $\beta\leq2-\frac1n-q$.
\end{enumerate}
These conclusions follow from Theorem \ref{Th1.1} and its counterpart in \cite{JJL2026}.
\end{remark}

\begin{remark}[The influence of the gradient coefficient and exponent]\label{Rem7.2}
The role of $q$ depends on the asymptotic sign of $g$. If $g\equiv0$, the equation is gradient-independent, and the value of $q$ is irrelevant.

Under the hypotheses of the main theorems, if $\lim\limits_{t\to+\infty}g(t)>0$, the Keller--Osserman integrals are finite for $1<q<2$; see Remark \ref{Rem4.1}. At $q=2$, the corresponding positively weighted integrals are also finite; see Remarks \ref{Rem6.criticalA} and \ref{Rem6.criticalB}. Hence, no positive admissible entire subsolution exists for $1<q\leq2$, and the increasing convex radial solutions blow up at finite radii.

If $\lim\limits_{t\to+\infty}g(t)\leq0$, the negative gradient term may compensate for the growth of $f$. For instance, let $p,m\geq0$ and $0<q\leq2$, and consider
$$
S_k^{\frac1k}[\Delta uI-D^2u]=1+u^p-(1+u)^{-m}|Du|^q\quad\text{in }\mathbb R^n.
$$
By \eqref{KO_negative_divergence_equivalent}, this equation admits a positive admissible classical entire subsolution if and only if
$$
p\leq1\quad\text{or}\quad p+m\leq q.
$$
In particular, entire existence is possible even with a superlinear zero-order term when $1<p\leq q-m$.

Thus, $q\leq1$ is necessary for entire existence when $g$ is eventually positive. The value $q=2$ is instead the structural exponent at which the derivative estimates change form.
\end{remark}

\begin{remark}[Behavior at the origin]\label{Rem7.3}
The degenerate and singular gradient terms lead to different behavior at the origin.
Under condition (A), if $0<q<1$ and $f(a)=0$, the constant solution $u\equiv a$ coexists with an immediately increasing radial solution satisfying $\varphi_a^\prime(0)=\varphi_a^{\prime\prime}(0)=0$. This explains the nonconstancy requirement in Theorem \ref{Th1.1}, since constant solutions exist independently of the Keller--Osserman condition.
For $q<0$, Lemma \ref{Lem2.2}~(iv) and Remark \ref{Rem2.2} give the common Sobolev range $1\leq\delta<1-\frac1q$ for the constructed radial entire subsolutions. If $g(a)>0$ and the two-sided bound in Remark \ref{Rem2.2-iv}~(ii) holds, the refined asymptotics improve this range to $1\leq\delta<n(1-\frac1q)$ for the associated radial function. The $C^1$ regularity used in the comparison argument is included in the definition of a strong subsolution.
\end{remark}

%


\noindent{\bf Acknowledgements.}

J. Ji was supported by the SEU Innovation Capability Enhancement Plan for Doctoral Students (Grant No. CXJH\_SEU 24123).
F. Jiang was supported by the National Natural Science Foundation of China (Grant No. 12271093), and by the Shanghai Institute for Mathematics and Interdisciplinary Sciences (Grant No. SIMIS-ID-2025-AD).
M. Li was supported by the Alexander von Humboldt Research Fellowship.
F. Jiang and M. Li were supported by the Jiangsu Provincial Scientific Research Center of Applied Mathematics (Grant No. BK20233002).

\bibliographystyle{amsplain}

\end{document}